\titleformat{\section}[hang]
{\normalfont\Large\bfseries}
{\thesection.}{0.5em}{}
\titlespacing*{\section}{0pc}{2pc}{0.25pc}
\titleformat{\subsection}[runin]
{\normalfont\large\bfseries}
{\thesubsection}{0.5em}{}
\titlespacing{\subsection}{0pc}{1.5pc}{0.5pc}
\DeclareMathOperator{\ev}{ev}
\DeclareMathOperator{\dom}{dom}
\newcommand{\<}{\left\langle}
\renewcommand{\>}{\right\rangle}
\newcommand{\C}{\mathbb{C}}
\newcommand{\N}{\mathbb{N}}
\newcommand{\cum}{\kappa_B}
\newcommand{\pe}{\partial_\eta}
\newcommand{\pea}{\partial_\eta^\ast}
\newcommand{\pej}{\partial_{\eta,j}}
\newcommand{\peaj}{\partial_{\eta, j}^\ast}
\newcommand{\bx}{B{\<\mathbf{x}\>}}
\newcommand{\x}{\mathbf{x}}
\newcommand{\con}{E_{B'\cap M}}
\newcommand{\at}{\tilde{A}}
\newcommand{\cpe}{\overline{\pe}}
\newcommand{\xit}{\tilde{x}_i}
\newcommand{\ait}{\tilde{A}_i}
\newtheorem{thm}{Theorem}[section]
\newtheorem{prop}[thm]{Proposition}
\newtheorem{lem}[thm]{Lemma}
\newtheorem*{lem*}{Lemma}
\newtheorem{cor}[thm]{Corollary}
\theoremstyle{definition}
\newtheorem{defi}[thm]{Definition}
\newtheorem{ex}[thm]{Example}
\newtheorem{rem}[thm]{Remark}
\newtheorem{thmalpha}{Theorem}
\title{\textbf{ON CONJUGATE SYSTEMS WITH RESPECT TO COMPLETELY POSITIVE MAPS}}
\author{Yoonkyeong Lee}
\address{Department of Mathematics, Michigan State University\hfill \url{leeyoo16@msu.edu}}
\date{}
\begin{document}

\maketitle
\begin{abstract}
We study the operator-valued partial derivative associated with covariance matrices on a von Neumann algebra $B$. We provide a cumulant characterization for the existence of conjugate variables and study some structure implications of their existence. Namely, we show that the center of the von Neumann algebra generated by $B$ and its relative commutant is the center of $B$.
\end{abstract}


\section*{Introduction}

The free Fisher information and free entropy introduced by Voiculescu are analogues of these quantities in classical probability theory and have been useful tools in the study of von Neumann algebras. In particular, using a nonmicrostates approach, Dabrowski showed in \cite{MR2606868} that $W^\ast(x_1,...,x_n)$ is factor  when the free Fisher information $\Phi^*(x_1,\ldots, x_n)$ is finite. Operator-valued free probability has also been a topic of interest in the last three decades since Voiculescu introduced operator-valued free product (See \cite{MR799593, MR1372537, MR1407898, MR1704661, MR1737257, MR2159789, MR4198974, Ito24}). The goal of this paper is to extend Dabrowski's results to the operator-valued setting, where the coefficient algebra $\C$ is replaced with a von Neumann subalgebra $B$. 

Let $(M,\tau)$ be a tracial von Neumann algebra with a von Neumann subalgebra $B\leq M$ and assume that $M$ is generated by $B$ and a tuple of self-adjoint operators $\x=(x_i)_{i\in I}$. For a covariance matrix $\eta$ on $B$, we extend it to $M$ by composing with $E_B$, a conditional expectation onto $B$, and still denote $\eta:=\eta\circ E_B$. Then we consider a derivation $\pe$ on $\bx$ valued in a correspondence $L^2(M\boxtimes_\eta M,\tau)$ over $M$, which is determined by $\eta$ (see Definition \ref{defi:partial derivative}). This derivation is a multivariate version of the one considered by Shlyakhtenko in \cite{MR1737257}. A $(B,\eta)$-conjugate system is then a tuple of vectors $(\xi_i)_{i\in I} \subset L^2(M,\tau)$ which implement an integration-by-parts formula relative to $\pe$ (see Definition \ref{defi: conj var}). When $\eta$ is a single map along the diagonal, the existence of a conjugate system is equivalent to $\Phi^\ast(x_1,...,x_n:B,\eta)< \infty $ from \cite{MR1737257}.
We utilize a combination of combinatorial techniques of Mai, Speicher, and Weber \cite{MR3558227}, Popa's intertwining theorem \cite[Theorem 5.1]{MR2334200}, and Dabrowski's derivation methods \cite[Theorem 1]{MR2606868} to show the following:

\begin{thmalpha} [{Theorem~\ref{thm:main}}]\label{thm a}
    Let $(M,\tau)$ be a tracial von Neumann algebra generated by a von Neumann algebra $B$ and a tuple of self-adjoint operators $\x=(x_i)_{i\in I}$ with $|I|>1$. Assume that a $(B,\eta)$-conjugate system exists for $\x$ and a covariance matrix $\eta=(\delta_{ij}E_B)_{i,j\in I}$. Then \[Z(B\vee(B'\cap M))=Z(B).\] In particular, $Z(M)\subset Z(B)$.     
\end{thmalpha}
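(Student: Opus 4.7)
\emph{Strategy and reduction.} The inclusion $Z(B) \subseteq Z(B \vee (B' \cap M))$ is immediate: $Z(B)$ commutes with $B$ by definition and with $B' \cap M$ tautologically. The ``in particular'' statement is likewise formal, since $Z(M) \subseteq Z(B \vee (B' \cap M))$. The theorem therefore reduces to the reverse inclusion, for which I fix $z \in Z(B \vee (B' \cap M))$ and aim to prove $z \in B$. Note that such a $z$ automatically lies in $B' \cap M$ (since it centralizes $B$) and in $Z(B' \cap M)$ (since it also centralizes $B' \cap M$), so the problem is purely one of showing that $z$ has no ``$\x$-directions.''

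\emph{Plan via closed derivations.} The natural tool is the family of derivations $\pej$, whose closability on $L^2(M,\tau)$ is granted by the $(B,\eta)$-conjugate system $(\xi_j)$; denote the closures by $\overline{\pej}$. The algebraic common kernel of $\{\pej\}_{j \in I}$ on $\bx$ is visibly $B$, since $\pej(b) = 0$ for $b \in B$ while $\pej(x_i) = \delta_{ij}\,1\otimes 1$. My plan is to prove: (i) $z \in \dom(\overline{\pej})$ for each $j$; (ii) $\overline{\pej}(z) = 0$ for each $j$; and (iii) $\bigcap_{j \in I} \ker(\overline{\pej}) \cap M = B$. Step (i) should follow from smoothing via a free heat semigroup $T_t = \exp(-t \sum_j \overline{\peaj}\,\overline{\pej})$ in the style of \cite{MR2606868}, using the integration-by-parts identity for the $\xi_j$ to propagate regularity while the centralizer condition on $z$ controls the operator-valued coefficients.

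\emph{Main obstacle: showing $\overline{\pej}(z) = 0$.} Step (ii) is the heart of the argument and is where the genuine difficulty lies. Unlike in \cite{MR2606868}, $z$ does not in general commute with the generators $x_i$, so one cannot differentiate $[z, x_i] = 0$ to obtain a clean identity. Instead one must exploit the weaker two-sided data: the commutation of $z$ with $B$ (which, after differentiation and a Leibniz computation, constrains $\overline{\pej}(z)$ to be $B$-central in the bimodule $L^2(M \boxtimes_\eta M,\tau)$) and the commutation of $z$ with $B' \cap M$ (which imposes the analogous $(B' \cap M)$-centrality). To convert these constraints into $\overline{\pej}(z) = 0$, I would pair $\overline{\pej}(z)$ against dense families of simple tensors $a \otimes c \in L^2(M \boxtimes_\eta M,\tau)$ and invoke the explicit formula for $\peaj(a \otimes c)$ in terms of $\xi_j$, reducing the pairing to a sum of traces involving $z$, $\xi_j$, and elements of $B \vee (B' \cap M)$; the two-sided centrality of $z$ should then force all such pairings to vanish.

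\emph{Closing the argument via cumulants and intertwining.} For step (iii) I would invoke the cumulant characterization developed earlier in the paper, in the spirit of Mai-Speicher-Weber \cite{MR3558227}: the identity $\overline{\pej}(z) = 0$ translates, through the formula relating $\pej$ to the operator-valued free cumulants $\cum$, into the vanishing of every mixed cumulant $\cum(z, x_{i_1}, \dots, x_{i_k})$ whose tuple contains some $x_j$. The hypothesis $|I|>1$ enters crucially here to exclude the one-variable degeneracies that single-derivative cumulant relations cannot rule out on their own. Combined with $z \in B' \cap M$, this forces $z$ to be $B$-free from $\x$ over $B$ in the distributional sense, and a final application of Popa's intertwining theorem \cite[Theorem 5.1]{MR2334200} upgrades this rigidity to the algebraic conclusion $z \in B$. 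Coordinating these three tools — derivation analysis, operator-valued cumulant combinatorics, and intertwining — into a single coherent chain, and in particular passing $L^2$-limits through the Leibniz rule in step (ii), is the principal difficulty of the proof.
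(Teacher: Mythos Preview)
Your overall architecture (smooth $z$ into the domain of $\overline{\pej}$, show $\overline{\pej}(z)=0$, then deduce $z\in B$) matches the paper's, but your step~(ii) has a genuine gap, and the mechanism you propose there is not the one that actually works.

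You write that commutation of $z$ with $B'\cap M$ ``imposes the analogous $(B'\cap M)$-centrality'' on $\overline{\pej}(z)$. But to differentiate $[a,z]=0$ you need $a\in\dom(\overline{\pej})$, and generic elements of $B'\cap M$ are not in that domain; even when they are, Leibniz gives $[a,\overline{\pej}(z)]=-[\overline{\pej}(a),z]$, and there is no reason for the right-hand side to vanish. So your route to $(B'\cap M)$-centrality does not go through, and the subsequent ``pairing against simple tensors'' is left without content. The paper circumvents this by singling out very specific elements of $B'\cap M$: namely $\tilde x_i:=E_{B'\cap M}(x_i)$. These lie in $\dom(\overline{\pe})$ with $\overline{\pej}(\tilde x_i)=\delta_{ij}e_j$ (here $\eta_{ii}=E_B$ makes $e_i$ $B$-central, so the averaging survives). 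For $i\neq j$ one then has $\overline{\pej}(\tilde x_i)=0$, which forces the resolvent $\zeta_{t,j}=(t/(t+\peaj\overline{\pej}))^{1/2}$ to commute with $\tilde x_i$; together with $[z,\tilde x_i]=[z,b]=0$ this makes $\overline{\pej}\circ\zeta_{t,j}(z)$ central for $\tilde A_i:=B\langle\tilde x_i\rangle''$. The condition $|I|>1$ is used exactly here, to have some $i\neq j$ available, not in a cumulant-combinatorial endgame.

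The second idea you are missing is how $\tilde A_i$-centrality is turned into vanishing. This is where Popa's intertwining enters, but \emph{before} step~(ii), not after: the no-atoms result (Theorem~\ref{thm b}) applied to $\tilde x_i$ yields $\tilde A_i\nprec_{B\vee(B'\cap M)} B$, and a direct estimate then shows $L^2(M\boxtimes_\eta M,\tau)$ has no nonzero $\tilde A_i$-central vectors. Hence $\overline{\pej}\circ\zeta_{t,j}(z)=0$, and in the limit $\overline{\pej}(z)=0$. Your step~(iii) is then unnecessary: since $z$ commutes with $\tilde x_j\in B'\cap M$ and $\overline{\pej}(\tilde x_j)=e_j$, Leibniz gives $0=\overline{\pej}([\tilde x_j,z])=[e_j,z]$, and a two-line computation with $\eta_{jj}=E_B$ shows $\|[e_j,z]\|^2\geq \|z-E_B(z)\|_\tau^2$, so $z\in B$. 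No cumulant argument or kernel identification $\bigcap_j\ker(\overline{\pej})\cap M=B$ is needed (and the latter, as stated, is not what is proved).
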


As an immediate consequence of Theorem~\ref{thm a}, one has $Z(M)\subset Z(B)$. This recovers Dabrowski's factoriality result from \cite[Theorem 1]{MR2606868} when $B=\C$. This also recaptures $Z(M)\subset Z(B)$ for a von Neumann algebra $M$ generated by $B$ and a $B$-valued semicircular family (see \cite[Example 3.2, 3.3(b)]{MR1704661}). Thus Theorem~\ref{thm a} can be viewed as a generalization to tuples $\x$ that are close to a $B$-valued semicircular family in the sense that they admit a $(B,\eta)$-conjugate system. One another consequence of Theorem~\ref{thm a} is that if $B$ is a factor, then every intermediate algebra $B\vee(B'\cap M)\leq N \leq M$ is irreducible in $M$ (see Corollary~\ref{cor:irred}).
To prove the above theorem, we first show the von Neumann algebras $B\<E_{B'\cap M}(x_i)\>''$ do not intertwine into $B$ inside of $B\vee(B'\cap M)$. This implies, in particular, that there are no $B\<E_{B'\cap M}(x_i)\>''$-central vectors in $L^2(B\vee(B'\cap M)\boxtimes_\eta B\vee(B'\cap M),\tau)$, which then allows us to implement the same strategy used by Dabrowski. 

Our next theorem is critical in showing the lack of intertwining mentioned above. We use an argument similar to Mai, Speicher and Weber, which relies on the closability of $\pe$ when $(B,\eta)$-conjugate system exists. Recall that we say a self-adjoint element $x$ has no atoms in $B$ when $\ker(x-b)=0$ for all self-adjoint $b\in B$ (see \cite[Section 2]{MR4198974}).

\begin{thmalpha}[{Theorem~\ref{thm:no atom}}] \label{thm b}
    Let $(M,\tau)$ be a tracial von Neumann algebra generated by a von Neumann algebra $B$ and a tuple of self-adjoint operators $\x=(x_i)_{i\in I}$. Assume that a $(B,\eta)$-conjugate system exists for $\x$ and a covariance matrix $\eta$ with $\eta_{ii}=E_B$ for all $i\in I$. Then a self-adjoint $B$-linear combination $P=\sum_{j}a_jx_ib_j +b_0$ has no atoms in $B$ when $a_j,b_j\in B$ and there exists a positive scalar $c$ such that $\sum_{j}a_jb_j\geq c\cdot 1$.

    If we furthermore assume $\eta=(\delta_{ij}E_B)_{i,j\in I}$, then a self-adjoint $B$-linear combination $P=\sum_{i}\sum_{j}a_j^{(i)}x_ib_j^{(i)} +b_0$ has no atoms in $B$ when $a_j^{(i)},b_j^{(i)}\in B$ and there exists an $1\leq i\leq n$ and a positive scalar $c$ such that $\sum_{j}a_j^{(i)}b_j^{(i)}\geq c\cdot 1$.
\end{thmalpha}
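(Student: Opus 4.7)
The plan is to adapt the Mai--Speicher--Weber argument \cite{MR3558227} to the $B$-valued setting. Fix the index $i$ witnessing the positivity hypothesis. The existence of a $(B,\eta)$-conjugate system makes the component $\pej$ (for $j=i$) closable as an unbounded operator from $\bx\subset L^2(M,\tau)$ into $L^2(M\boxtimes_\eta M,\tau)$, with closure that I will denote $\cpe$. Observe that $\pej(P)=\sum_j a_j\otimes_\eta b_j$ in the first case of the theorem, or $\pej(P)=\sum_j a_j^{(i)}\otimes_\eta b_j^{(i)}$ in the second (since $\eta_{ik}=\delta_{ik}E_B$ annihilates $\pej(x_k)$ for $k\neq i$). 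I would argue by contradiction: suppose there is a self-adjoint $b'\in B$ with $q:=1_{\{0\}}(P-b')$ a nonzero projection, so $(P-b')q=0$.

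The first technical step would be to place $q$ into $\dom(\cpe)$ via functional calculus. I would show that the resolvents $(P-b'-\lambda)^{-1}$ lie in $\dom(\cpe)$ for $\lambda\in\C\setminus\mathbb{R}$ and satisfy
\[
\cpe\bigl((P-b'-\lambda)^{-1}\bigr)=-(P-b'-\lambda)^{-1}\cdot\pej(P)\cdot(P-b'-\lambda)^{-1}
\]
by a Neumann-series argument, then invoke the Stieltjes-inversion limit $\pi q=\lim_{\epsilon\to 0^+}\epsilon\bigl((P-b')^2+\epsilon^2\bigr)^{-1}$ together with closability to conclude that $q\in\dom(\cpe)$.

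Next, applying the Leibniz rule to the identity $(P-b')q=0$ yields $\pej(P)\cdot q+(P-b')\cdot\cpe(q)=0$ in $L^2(M\boxtimes_\eta M,\tau)$. Left-multiplying by $q$ annihilates the second summand (since $q(P-b')=0$ by self-adjointness), producing the key identity $q\cdot\pej(P)\cdot q=0$, or equivalently $\sum_j qa_j\otimes_\eta b_jq=0$ in $L^2(M\boxtimes_\eta M,\tau)$.

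To derive the contradiction I would pair this vector against $1\otimes_\eta 1$ under the scalar inner product $\<a\otimes_\eta b,a'\otimes_\eta b'\>=\tau(b^*E_B(a^*a')b')$. Using $a_j\in B$ (so $E_B(a_j^*q)=a_j^*E_B(q)$), self-adjointness of $q$, traciality, and the relation $\tau(yq)=\tau(yE_B(q))$ for $y\in B$, the sum collapses to $\tau\bigl(A\cdot E_B(q)^2\bigr)$, where $A:=\sum_j a_jb_j\geq c\cdot 1$ (writing $a_j^{(i)},b_j^{(i)}$ in the second case). Since $\tau$ is faithful and $q\neq 0$, one has $\tau(E_B(q))=\tau(q)>0$, so $E_B(q)\neq 0$ and $\|E_B(q)\|_2>0$; positivity of $A$ then yields $\tau(A\cdot E_B(q)^2)\geq c\|E_B(q)\|_2^2>0$, contradicting the vanishing. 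The main obstacle will be the technical extension of $\cpe$ to $q$: verifying the chain-rule identity for resolvents of $P-b'$ and passing the Stieltjes-inversion limit through the closed derivation in the operator-valued correspondence.
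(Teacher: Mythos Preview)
Your final computation is correct and agrees with the paper's: once you have $q\,\pej(P)\,q=0$, pairing against $e_i$ and using $\eta_{ii}=E_B$, $a_j\in B$, and traciality gives $\tau\bigl(E_B(q)\,(\sum_j a_jb_j)\,E_B(q)\bigr)\geq c\|E_B(q)\|_\tau^2$, forcing $q=0$ by faithfulness of $E_B$. The problem is the step before that.

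The gap is precisely the ``main obstacle'' you flag: you will not be able to place the spectral projection $q=1_{\{0\}}(P-b')$ in $\dom(\cpe)$ by Stieltjes inversion. First, the formula $\pi q=\lim_{\epsilon\to 0}\epsilon\bigl((P-b')^2+\epsilon^2\bigr)^{-1}$ is not the right one for a point mass (the correct strong limit is $q=\lim_{\epsilon\to 0}\epsilon^2\bigl((P-b')^2+\epsilon^2\bigr)^{-1}$), but more importantly, closability only helps if the \emph{derivatives} of the approximants converge in $L^2(M\boxtimes_\eta M,\tau)$, and for either approximant the chain rule produces factors like $(P-b')\bigl((P-b')^2+\epsilon^2\bigr)^{-1}$ acting on both sides of $\pej(P)$, which do not settle down near the kernel of $P-b'$. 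In general, spectral projections at atoms are \emph{not} in the domain of a closed derivation (the domain is stable under $C^1$ functional calculus, not under $1_{\{0\}}$), so there is no reason to expect $q\in\dom(\cpe)$, and the Leibniz step $\pej(P)q+(P-b')\cpe(q)=0$ is unjustified.

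The paper sidesteps this entirely. It never puts $q$ in the domain; instead it proves a sandwiching lemma (Proposition~\ref{prop:zero}): for any $P\in M\cap\dom(\cpe)$ and any self-adjoint $u,v\in M$ with $Pu=0$ and $P^*v=0$, one has $v\,\cpe(P)\,u=0$. The proof approximates $u,v$ by $u_k,v_k\in\bx$ with bounded operator norm (Kaplansky), expands $\langle Pu_k,\pea(v_kQ_1e_iQ_2)\rangle_\tau$ via Leibniz, and controls every error term using the operator-norm estimates $\|\pea(pe_i)\|_\tau\leq\|\pea(e_i)\|_\tau\|p\|$ and $\|\langle\pe(p)\mid e_i\rangle_M\|_\tau\leq 2\|\pea(e_i)\|_\tau\|p\|$ established in Proposition~\ref{prop:adjoint ineq}. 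Applying this with $u=v=q$ and $P-b'\in\bx$ yields $q\,\pe(P)\,q=0$ directly, after which your final computation goes through verbatim. The moral: differentiate the polynomial $P$ and squeeze by the projection, rather than trying to differentiate the projection itself.
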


We additionally obtain a characterization for $(B,\eta)$-conjugate systems in terms of $B$-valued cumulants ($\cum^{(d)})_{d\in \N}$. It is often easier to work with cumulants rather than directly with moments---particularly in the operator-valued case---and in our setting it allows us to more easily confirm examples such as $B$-valued semicircular systems (see Examples~\ref{ex:semicircular}, \ref{ex:amalgamation}). A $B$-valued cumulant characterization for conjugate variable has been obtained in \cite[Theorem 4.1]{MR1907203} when $\eta=\delta_{ij}\tau$ and in \cite[Lemma 4]{MR2159789} when $|I|=1$ and $\eta=\delta_{ij}E_B$. For general $\eta$, we show the following:

\begin{thmalpha}[{Theorem~\ref{thm:cumulants}}] \label{thm c}
Let $(M,\tau)$ be a tracial von Neumann algebra generated by a von Neumann algebra $B$ and a tuple of self-adjoint operators $\x=(x_i)_{i\in I}$. Then $\xi_i\in L^2(M,\tau)$ is the $i$-th $(B,\eta)$-conjugate variable for $\x$ if and only if for all $b,b_1,\ldots, b_d\in B$ and $i_1,\ldots, i_d\in I$ one has
     \[\begin{cases} 
        \cum^{(1)}(\xi_i b) = 0,  \\
        \cum^{(2)}(\xi_i \otimes bx_j)=\eta_{ij}(b), \\ 
        \cum^{(d+1)}(\xi_i \otimes b_1x_{i_1} \otimes \cdots \otimes b_dx_{i_d})=0 ,d\geq 2
    \end{cases}.
    \]
\end{thmalpha}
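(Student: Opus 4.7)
The plan is to translate the defining integration-by-parts formula for $\xi_i$ into a statement about cumulants via the operator-valued moment-cumulant relation (Möbius inversion over non-crossing partitions). First I would derive an explicit form of the integration-by-parts formula on monomials: applying the Leibniz rule for $\pe$ to a monomial $P = b_0 x_{j_1} b_1 \cdots x_{j_d} b_d$, together with $\pe(b) = 0$ for $b \in B$ and the defining action of the $i$-th component of $\pe$ on the generators (encoded by $\eta$), the pairing with $1\otimes 1$ in $L^2(M \boxtimes_\eta M, \tau)$ breaks into a sum over the positions $k$ at which $x_{j_k}$ is ``contracted'' via $\eta_{i j_k}$. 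This yields a closed-form expression for $E_B(\xi_i P)$ as a sum over $k$ of $B$-valued expectations of the two subwords cut at position $k$, joined by $\eta_{i j_k}$.

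Next I would compare this with the operator-valued moment-cumulant expansion $E_B[y_0 y_1 \cdots y_d] = \sum_{\pi \in NC(d+1)} \kappa_\pi[y_0,\ldots,y_d]$ applied to $y_0 = \xi_i$ and $y_k = b_k x_{i_k}$. The cumulant conditions asserted in the theorem say exactly that the only cumulants with $\xi_i$ in the first slot that can be nonzero are the mixed pairs $\cum^{(2)}(\xi_i \otimes b x_j) = \eta_{ij}(b)$. Consequently the only partitions $\pi$ that contribute nontrivially are those whose block containing position $0$ is a two-element block $\{0,k\}$; non-crossingness forces such a block to split the remaining positions into nested intervals, and the corresponding factorization through $\kappa_\pi$ reproduces the closed-form expression from the previous step term by term. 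This gives the ``if'' direction, since the integration-by-parts formula determines $\xi_i$ uniquely as an element of $L^2(M,\tau)$.

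For the ``only if'' direction I would proceed by induction on $d$. The cases $d = 0$ and $d = 1$ follow directly from the integration-by-parts formula applied to $P = b$ and $P = b x_j$. For $d \geq 2$, Möbius-invert the moment-cumulant formula to express $\cum^{(d+1)}(\xi_i \otimes b_1 x_{i_1} \otimes \cdots \otimes b_d x_{i_d})$ as $E_B[\xi_i b_1 x_{i_1} \cdots b_d x_{i_d}]$ minus a signed sum of lower-order nested cumulants; by the integration-by-parts formula and the induction hypothesis, these lower-order terms cancel exactly, leaving $\cum^{(d+1)} = 0$. The principal obstacle is bookkeeping the $B$-multilinearity of the operator-valued cumulants through the non-crossing sum---specifically, verifying that when the pair block $\{0,k\}$ is collapsed, the inner and outer subproducts carry the $B$-elements $b_k$ in the correct positions to match the $\eta_{i i_k}(b_k\,\cdot\,)$ factor on the integration-by-parts side. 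Analogous arguments appear in the special cases $|I|=1$ in \cite{MR2159789} and $\eta_{ij} = \delta_{ij}\tau$ in \cite{MR1907203}, and the proof here is a joint generalization of these.
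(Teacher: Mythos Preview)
Your proposal is correct and follows essentially the same route as the paper's proof: both directions hinge on the moment--cumulant relation, with the ``if'' direction reading off the integration-by-parts formula from the fact that only pair blocks $\{0,k\}$ containing $\xi_i$ survive, and the ``only if'' direction established by induction on $d$ (base cases $d=0,1$ by direct computation, inductive step by isolating $\cum^{(d+1)}$ from $E_B^{(d+1)}$ and using the induction hypothesis to reduce the remaining sum to exactly the integration-by-parts expression). The only cosmetic difference is that you phrase the inductive step as M\"obius inversion with a signed sum, whereas the paper writes $E_B^{(d+1)} = \cum^{(d+1)} + \sum_{\pi\not\equiv 1_{d+1}}\cum(\pi)$ and shows the second sum already matches the integration-by-parts side without invoking the M\"obius function explicitly; these are equivalent.
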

\noindent

The paper is organized as follows. In Section~\ref{section: preliminaries}, we review several topics necessary for this paper including correspondences, operator-valued semicircular operators, operator-valued combinatorics, and Popa's intertwining theorem. In Section~\ref{section: conjugate system}, we define the $\eta$-partial derivative with respect to a completely positive map $\eta:B\to B\otimes B(\ell^2(I))$, and we define $(B,\eta)$-conjugate systems. There we also prove Theorem~\ref{thm c} and present some examples. In Section~\ref{section: no atoms}, we establish some technical bounds for $\eta$-partial derivatives and prove Theorem~\ref{thm b}. Section~\ref{section: relative diffuse} is devoted to the proof of Theorem~\ref{thm a}.

\subsection*{Acknowledgements}
I would like to express my gratitude to my advisor, Brent Nelson, for his guidance, and encouragement throughout the course of this research. I am particularly grateful for the original idea he provided, which helped to advance this paper. I would like to thank Ionut Chifan, Adrian Ioana, Tobias Mai, Gregory Patchell, Aldo Garcia Guinto and Srivatsav Kunnawalkam Elayavalli for their invaluable expertise and discussion which greatly contributed to the development of this article. This research was supported by NSF grant DMS-2247047.


\section{Preliminaries}\label{section: preliminaries}
Throughout the paper, $M$ denotes a von Neumann algebra equipped with a faithful normal tracial state $\tau$, and we call the pair $(M,\tau)$ a \emph{tracial von Neumann algebra}. We will use lattice notation: $B\leq M$ denotes that $B$ is a von Neumann subalgebra of $M$  and $B_1\vee B_2$ denotes the von Neumann algebra generated by two von Neumann subalgebras $B_1, B_2\leq M$. We denote the center of a von Neumann algebra by $Z(M):=M'\cap M$. We say $B$ is irreducible in $M$ if $B'\cap M=\C$. The normalizer of $B$ in $M$ is $\mathcal{N}_M(B):=\{u\in M: u \text{ unitary }, uBu^\ast=B\}$.
$L^2(M,\tau)$ denotes the Hilbert space from the GNS construction associated to $\tau$ with $\<a,b\>_\tau=\tau(a^\ast b)$, the norm induced by $\tau$ on $L^2(M,\tau)$ is denoted by $\| \cdot  \|_\tau$ and $J_\tau$ denotes the canonical conjugation operator on $L^2(M,\tau)$ which is determined by $J_\tau(x)=x^\ast$ for $x\in M$.

Given two von Neumann algebras $N,M$, a \emph{$N,M$-correspondence} is a Hilbert space $H$ equipped with two normal unital $\ast$-homomorphisms 
    \[ \lambda:N\to B(H), \quad  \rho: M^{op} \to B(H)\]
such that $\lambda(x)\rho(y)=\rho(y)\lambda(x)$ for all $x\in N, y\in M$. We write $x\xi y:=\lambda(x)\rho(y)\xi$ for $\xi\in H$. When $N=M$, $H$ is called a (\emph{von Neumann}) \emph{correspondence} over $M$. For a correspondence $H$ over $M$ and a von Neumann subalgebra $B\leq M$, we say $\xi\in H$ is a $B$-\emph{central vector} if $b\xi=\xi b$ for all $b\in B$.

Let $A$ be a $C^{\ast}$-algebra. An \emph{inner-product $A$-module} is a linear space $E$ which is a $A$-module together with a map $E\times E \to A : (x,y) \to \< x \mid y\>_A$ such that 
    \begin{enumerate}
        \item $\< x \mid \alpha y +\beta z \>_A = \alpha \< x \mid y\>_A +\beta\< x \mid z\>_A$ 
        \item $\< x \mid ya\>_A = \< x \mid y \>_Aa$
        \item $\< y\mid x\>_A= \< x \mid y\>_A^\ast$
        \item $\< x\mid x\>_A \geq 0$ with equality if and only if $x=0$
    \end{enumerate}
for $x,y\in E$, $a\in A$ and $\alpha,\beta\in \C$.
If $E$ is an inner product $A$-module, for $x\in E$, $\| x\| =\| \< x\mid x \>_A\| ^{1/2}$ is a norm on $E$. We say an inner-product $A$-module is a \emph{Hilbert $A$-module} if it is complete with respect to its norm. We also have a module version of the Cauchy-Schwarz inequality: 
    \[|\<x\mid y\>_A|^2 \leq \|\<x\mid x\>_A\|\<y\mid y\>_A.\] 
For a Hilbert $A$-module $E$, define the set of adjointable maps $\mathcal{L}(E)$ to be the set of all maps $t:E\to E$ for which there is a map $t^\ast:E\to E$ such that $\<tx\mid y\>_A=\<x \mid t^\ast y\>_A$ for all $x,y\in E$. Such maps are automatically $A$-linear and bounded (see \cite{MR1325694}).
A \emph{$C^\ast$-correspondence} over $A$ is Hilbert $A$-module $E$ along with a $\ast$-homomorphism $\varphi_E:A\to \mathcal{L}(E)$. We refer to $\varphi_E$ as the left action of a $C^\ast$-correspondence $E$ which we will simply write $ax$ for $\varphi_E(a)x$. For further details, see \cite{MR2102572} and \cite{MR1325694}.

\subsection{B-valued semicircular operators}\label{subsec:B-valued semicircular} We recall the notion of an $B$-valued semicircular system from \cite[Section 2]{MR1704661}. Let $B$ be a von Neumann algebra with a normal faithful tracial state $\tau$ and $\eta_{ij}:B\to B$ linear maps with $i,j$ in a countable index set $I$. Let $\{e_{i,j}\in B(\ell^2(I))\colon i,j\in I\}$ be a family of matrix units and assume that $\eta(b):= \sum_{i,j\in I} \eta_{ij}(b)\otimes e_{ij}$ converges in the $\sigma$-weak operator topology for all $b\in B$, so that $\eta$ is a map from $B$ to $B\otimes B(\ell^2(I))$. If $\eta$ is normal and completely positive, it is called a \emph{covariance matrix}. By \cite[Lemma 2.2]{MR1704661}, there exists a unique $C^\ast$-correspondence over $B$, which we denote by $B\boxtimes_\eta B$, equipped with vectors $\{e_i:i\in I\}\subset B\boxtimes_\eta B$ with property $\<ae_ib\mid ce_jd\>_B=b\eta_{ij}(a^\ast c)d$ for $a,b,c,d\in B$ and $B\boxtimes_\eta B$ is the closed linear span of $\{ae_ib:a,b\in B, i\in I\}$. Define $L^2(B\boxtimes_\eta B,\tau)$ to be the von Neumann correspondence over $B$ which is the closure of $B\boxtimes_\eta B$ with respect to the inner product 
     \[\< ae_ib ,ce_jd\>=\tau\left(\< ae_ib\mid ce_jd\>_B\right)=\tau\left(b^\ast \eta_{ij}(a^\ast c)d\right).\]
Note that $B\boxtimes_\eta B$ and $L^2(B\boxtimes_\eta B,\tau)$ admit the natural left and right actions of $B$ and the latter are normal.
Consider the full Fock space 
    $$\mathcal{F}=B\oplus \bigoplus_{n>0}(B\boxtimes_\eta B)^{\otimes_B^n},$$
and $B$-linear operators $L(e_i)$ on $\mathcal{F}$ defined by         
    \[L(e_i)b=e_i\cdot b,\] \[L(e_i)\cdot x_1\otimes \cdots \otimes x_d=e_i\otimes x_1\otimes \cdots \otimes x_d.\]
Denote its completion with respect to the inner product $\<\cdot,\cdot \>=\tau(\<\cdot \mid \cdot \>_B)$ by $\mathcal{F}_\tau$, and let $L_\tau(e_i)$ denote the unique bounded extension of $L(e_i)$ to $\mathcal{F}_\tau$.
The family of operators $s_i:=L_\tau(e_i)+L_\tau(e_i)^\ast$ for $i\in I$ is called a \emph{$B$-valued semicircular family with covariance $\eta$}, and one denotes $\Phi(B, \eta):= (B\cup \{s_i:i\in I\})'' \leq B(\mathcal{F}_\tau)$. Note that $\Phi(B, \eta)$ does not depend on $\tau$ by \cite[Proposition 2.15]{MR1704661}. Recall from \cite[Lemma 2.10]{MR1704661} that there exists a normal conditional expectation $E_B:\Phi(B,\eta)\to B$ and from \cite[Proposition 2.20]{MR1704661} that  if $\eta$ is $\tau$-symmetric, i.e., $\tau(\eta_{ij}(a)b)=\tau(a\eta_{ji}(b))$ holds for all $i,j \in I$ and $a,b\in B$, then $\tau\circ E_B$ is tracial on $\Phi(B,\eta)$. Moreover, in this case $E_B$ is faithful by \cite[Proposition 5.2]{MR1704661}.

\subsection{Operator-valued combinatorics}\label{section: op cum} Combinatorics over the lattice of non-crossing partitions has provided many insights to free probability theory, particularly through cumulants which provide a way to ``linearize'' free independence. In \cite{MR1407898}, Speicher generalized this combinatorial approach to the operator-valued case. Let $NC(d)$ denote the set of all non-crossing partitions of $\{1,...,d\}$. Let $B$ be a unital algebra and $A$ a $B$-$B$-bimodule. $A^{\otimes_B^d}$ denotes the $d$-fold $B$-tensor product of $A$ with itself which is also a $B$-$B$-bimodule over $B$. That is, $A^{\otimes_B ^d}$ is spanned by elements of the form: \[(a_1\otimes_B \cdots \otimes a_k\otimes ba_{k+1} \otimes_B\cdots \otimes a_d) = (a_1\otimes_B \cdots \otimes_B a_kb \otimes_B a_{k+1}\otimes_B \cdots \otimes a_n)\] for $a_1,...,a_d\in A$ and $b\in B$. I will suppress the subscript $B$ from the notation $\otimes_B$ and use $\otimes$ for convenience.

For each $d\in \N$, let $f^{(d)}:A^{\otimes_B^d}\to B$ be a linear $B$-$
B$-bimodule map. Then define the corresponding \emph{$B$-valued multiplicative function} 
\begin{align*}
    \hat{f} : \bigcup_{d=1}^\infty(NC(d)\times A^{\otimes_B^d})&\to B\\
    (\pi,a_1\otimes \cdots \otimes a_d) &\mapsto \hat{f}(\pi)[a_1\otimes \cdots\otimes a_d]
\end{align*}
where $\hat{f}(\pi)[a_1\otimes\cdots\otimes a_d]$ is defined recursively as follows: if $\pi=\pi_1\cup 1_{[k,l]}$ then
    \[ \hat{f}(\pi)[a_1\otimes\cdots\otimes a_d] =\hat{f}(\pi_1)[a_1\otimes \cdots\otimes a_{k-1} f^{(l-k+1)}(a_k\otimes\cdots\otimes a_l)\otimes a_{l+1}\otimes \cdots \otimes a_d]\] 
    with the base case $\hat{f}(\emptyset)(b)=b$ for $b\in B$.
For example, when $\pi = \{1,4\} \cup \{2,3\}$, $\hat{f}(\pi)[a_1\otimes a_2\otimes a_3\otimes a_4] = f^{(2)}(a_1f^{(2)}(a_2\otimes a_3)\otimes a_4)$.
Two important examples of multiplicative maps come from conditional expectations and their associated cumulants.

Let $B\leq M$ be an inclusion of von Neumann algebras and let $E_B:M\to B$ be faithful normal conditional expectation. Then viewing $M$ as a $B$-$B$-bimodule, the $B$-valued multiplicative function $\hat{E}_B=(E_B^{(d)}:M^{\otimes_B d} \to B)_{d\in\N}$ defined by  \[E_B^{(d)}(a_1\otimes a_2\otimes \cdots \otimes a_d) = E_B(a_1a_2\cdots a_d)\] is called the \emph{moment function}.
The \emph{cumulant function} with respect to $E_B$ is a multiplicative function $\hat{\kappa}_B=(\cum^{(d)}:M^{\otimes_B d} \to B)_{d\in\N}$ is defined by the moment-cumulant formulas (see more details in \cite[Example 1.2.2 and Proposition 3.2.3]{MR1407898}):
\begin{align*}
    E_B(a_1\cdots a_d)&=\sum_{\pi \in NC(d)} \cum(\pi)[a_1\otimes a_2\otimes \cdots \otimes a_d]\nonumber
\end{align*}
which can be written as
\begin{align}\label{pre:moment cumulant d}   
    E_B(a_1\cdots a_d)
    &=\cum^{(d)}(a_1\otimes \cdots \otimes a_d) +\sum_{\substack{\pi\in NC(d)\\ \pi \not\equiv 1_d}}\cum(\pi)[a_1\otimes\cdots \otimes a_d].
\end{align}
where $\pi\equiv 1_d$ denotes $\pi$ being only one block of $\{1,...,d\}$.

As an illustration, when $d=2$, we have
\begin{align}\label{pre:cumulant 2}
    E_B(a_1a_2) =\cum(\{1,2\})[a_1\otimes a_2] +\cum(\{1\}\cup\{2\})[a_1\otimes a_2]=\cum^{(2)}(a_1\otimes a_2) +\cum^{(1)}(a_1)\cum^{(1)}( a_2). 
\end{align}
When $d=3$, it is easy to check
\begin{align*}
    \cum^{(3)}(a_1\otimes a_2\otimes a_3)=E_B(a_1a_2a_3)&-E_B(a_1)E_B(a_2a_3)-E_B(a_1E_B(a_2)a_3)\nonumber\\&-E_B(a_1a_2)E_B(a_3)+2E_B(a_1)E_B(a_2)E_B(a_3).
\end{align*}
Note that $\cum^{(d)}(a_1\otimes \cdots \otimes a_d)=0$ if $a_i\in B$ for some $i\in \{1,..,d\}$ for $d>1$. Recall from \cite[Section 3.3]{MR1407898} that for a family of subalgebras $(A_i)_{i\in I}$ of $M$ with $B\subset A_i$ for all $i\in I$, we say the subalgebras $(A_i)_{i\in I}$ are \emph{free with amalgamation over $B$} if $E_B(a_1\cdots a_d)=0$ whenever $a_i\in A_{j_i}, j_1\neq j_2\neq  \cdots \neq j_d$ and $E_B(a_i)=0$ for all $j_i\in I$. Equivalently, for all $d\geq 2$ and $a_i\in A_{j_i}$ for $i=1,\dots,d$ and $j_i\in I$ one has $\cum^{(d)}(a_1\otimes \cdots \otimes a_d)=0$ whenever $i_1,...,i_d$ are not all equal.

\subsection{Popa's intertwining theorem}
     In case of $B=\C$, Dabrowski's proof in \cite[Theorem 1]{MR2606868} required diffuseness of $\C\<x_i\>''$ to argue that there is no vectors in $L^2(M\bar\otimes M)$ that are $x_i$-central. Similarly, in operator-valued case we need a tool to argue absence of central vectors. 
\begin{thm}\cite[Theorem 5.1]{MR2334200} \label{pre:popathm}  Assume a von Neumann algebra $M$ has a faithful normal tracial state $\tau$ and $A,B\leq M$ are unital von Neumann subalgebras. Then the following are equivalent:
    \begin{enumerate}
        \item There exist projections $p\in P(A)$, $q\in P(B)$, a nonzero partial isometry $v\in qMp$ and a unital normal $\ast$-homomorphism $\theta:pAp\to qBq$ satisfying $$\theta(pap)v=v(pap).$$ for all $a\in A$.
        \item There does not exist a net of unitaries $(u_i)_{i\in I}\subset A$ such that $\lim_{i\to\infty}\| E_B(xu_iy)\| _\tau=0$ for any $x,y\in M$.
        \item\label{popa 3} There exists a $A$-$B$-correspondence $K\leq L^2(M,\tau)$ such that $\dim_B(K)<\infty$.
    \end{enumerate}
\end{thm}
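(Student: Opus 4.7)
The plan is to establish the cyclic chain $(1) \Rightarrow (3) \Rightarrow (2) \Rightarrow (1)$, which is the standard organisation of Popa's proof. All three steps take place inside the Jones basic construction $\langle M, e_B\rangle$ for the inclusion $B \leq M$ equipped with its canonical normal semifinite trace $\widehat{\tau}$ determined by $\widehat{\tau}(xe_By) = \tau(xy)$ for $x,y \in M$. The key bridge is the standard bijection between projections $f \in A' \cap \langle M, e_B\rangle$ with $\widehat{\tau}(f) < \infty$ and $A$-$B$-subcorrespondences $K \leq L^2(M,\tau)$ of finite right $B$-dimension, under which $\widehat{\tau}(f) = \dim_B(K)$. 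I would also use the identity $\|E_B(z)\|_\tau^2 = \widehat{\tau}(z^* e_B z)$ throughout to translate between (2) and the geometry inside $\langle M, e_B\rangle$.

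For $(1) \Rightarrow (3)$, given $v \in qMp$ and $\theta : pAp \to qBq$ with $\theta(pap)v = v(pap)$, I would form $K := \overline{AvB}^{\|\cdot\|_\tau} \subset L^2(M,\tau)$. The intertwining relation makes $K$ an $A$-$B$-subbimodule, and the orthogonal projection onto $K$ is dominated in $\langle M, e_B\rangle$ by the single rank-one projection $ve_Bv^*$ (after unpacking the action of $A$ through $\theta$), giving $\dim_B(K) \leq \widehat{\tau}(ve_Bv^*) = \tau(vv^*) < \infty$. For $(3) \Rightarrow (2)$, the orthogonal projection $e_K$ onto any such $K$ lies in $A' \cap \langle M, e_B\rangle$ with finite $\widehat{\tau}$-trace and is preserved by conjugation by unitaries in $A$; if a net $(u_i) \subset A$ satisfied $\|E_B(xu_iy)\|_\tau \to 0$ for all $x,y \in M$, then expanding an arbitrary pair of vectors of $K$ in a Pimsner--Popa basis for $M$ over $B$ would force $\langle u_i\xi, \eta\rangle \to 0$ for all $\xi, \eta \in K$, contradicting $u_i e_K u_i^* = e_K \neq 0$.

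The substantial direction is $(2) \Rightarrow (1)$. I would consider the weak closure $\mathcal{C}$ of the convex hull of $\{u e_B u^* : u \in U(A)\}$ inside $\langle M, e_B\rangle$. Each $u e_B u^*$ has unit $L^2(\widehat{\tau})$-norm, so $\mathcal{C}$ sits in the unit ball of the Hilbert space $L^2(\widehat{\tau})$, which is weakly compact. The failure of the vanishing condition in (2) prevents $0$ from belonging to $\mathcal{C}$: via the identity above, any convex combination converging weakly to $0$ would produce a net of unitaries witnessing (2). By uniform convexity of $L^2(\widehat{\tau})$, there is then a unique element $h \in \mathcal{C}$ of minimal $L^2(\widehat{\tau})$-norm; its uniqueness combined with the $U(A)$-invariance of both $\mathcal{C}$ and $\|\cdot\|_{2,\widehat{\tau}}$ forces $uhu^* = h$ for all $u \in U(A)$, so $h \in A' \cap \langle M, e_B\rangle$. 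A nonzero spectral projection $f$ of $h$ then has finite $\widehat{\tau}$-trace, and via the bijection above corresponds to an $A$-$B$-subcorrespondence $K \leq L^2(M,\tau)$ of finite right $B$-dimension. Choosing a suitable vector $\xi \in K$ and applying polar decomposition (together with the finiteness of $\dim_B(K)$ to ensure the resulting module map is right-$B$-bounded) extracts the projections $p,q$, the partial isometry $v$, and the normal $\ast$-homomorphism $\theta: pAp \to qBq$ satisfying the intertwining relation of (1).

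The main obstacle is the $(2) \Rightarrow (1)$ step. Two points must be handled carefully: first, identifying the correct ambient Hilbert space $L^2(\widehat{\tau})$ in which $\mathcal{C}$ is weakly compact and on which the $U(A)$-adjoint action is isometric, so that a unique minimiser exists; and second, converting the resulting $A$-central projection of finite $\widehat{\tau}$-trace into the algebraic intertwining data $(p,q,v,\theta)$, which requires unpacking the correspondence--projection dictionary and performing a polar decomposition that respects the bimodule structure. Every other part of the argument is essentially bookkeeping with the Jones basic construction and Pimsner--Popa bases.
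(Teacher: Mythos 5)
The paper does not prove this theorem at all: it is quoted verbatim from Popa \cite[Theorem 5.1]{MR2334200}, so the only meaningful benchmark is the standard proof from the literature. Your overall architecture --- the basic construction $\langle M,e_B\rangle$ with its trace $\widehat{\tau}$, the dictionary between finite-$\widehat{\tau}$-trace projections in $A'\cap\langle M,e_B\rangle$ and $A$-$B$-subbimodules of finite right $B$-dimension, convex-hull averaging with a unique minimal-norm element, spectral cutting, and a final polar decomposition --- is indeed the canonical one. But two of your computations are wrong in ways that break the argument as written. First, your ``bridge identity'' is false: by the defining property of $\widehat{\tau}$ one has $\widehat{\tau}(z^*e_Bz)=\tau(z^*z)=\|z\|_\tau^2$, not $\|E_B(z)\|_\tau^2$; the correct translation is $\|E_B(z)\|_\tau^2=\widehat{\tau}(e_Bze_Bz^*)$, coming from $e_Bze_B=E_B(z)e_B$. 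Second, in $(1)\Rightarrow(3)$ the intertwining $\theta(pap)v=v(pap)$ with $v\in qMp$ gives $v\,(pAp)\subseteq Bv$, equivalently $(pAp)\,v^*\subseteq v^*B$, so the cyclic vector generating a small right $B$-module is $v^*$, not $v$; your claimed domination of the projection onto $K=\overline{AvB}$ by $ve_Bv^*$ is backwards ($K$ contains $\overline{vB}$, so its projection \emph{dominates} the support of $ve_Bv^*$, which moreover need not be a projection), and the genuine step --- promoting the corner bimodule $\overline{pAp\,v^*B}\subseteq\overline{v^*B}$ to a nonzero $A$-$B$-bimodule of finite $B$-dimension, e.g.\ by cutting with the central support of $p$ and patching with partial isometries --- is exactly what your sketch omits.

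The more serious gap is in $(2)\Rightarrow(1)$. Averaging the bare projection $e_B$ cannot work: the elements $xe_By$ span $L^2(\widehat{\tau})$ densely, and pairing gives $\widehat{\tau}\left(ue_Bu^*\cdot y^*e_Bx^*\right)=\tau\left(E_B(u^*y^*)E_B(x^*u)\right)$, so ``$0\in\mathcal{C}$'' only yields a net $(u_i)\subset\mathcal{U}(A)$ with the \emph{one-sided} vanishing $\|E_B(xu_i)\|_\tau\to0$ for all $x\in M$. That does not directly contradict the two-sided hypothesis (2); bridging one-sided to two-sided is itself a nontrivial fact (essentially equivalent to the theorem, so invoking it here is circular). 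The standard remedy, which you should adopt, is to negate (2) quantitatively --- there exist $\varepsilon>0$ and a finite set $F\subset M$ with $\sum_{x,y\in F}\|E_B(x^*uy)\|_\tau^2\geq\varepsilon^2$ for every $u\in\mathcal{U}(A)$ --- and to average the finite-trace positive element $d_F:=\sum_{x\in F}xe_Bx^*$ instead of $e_B$. Then the self-pairing $\widehat{\tau}\left(u\,d_F\,u^*\,d_F\right)=\sum_{x,y\in F}\|E_B(x^*uy)\|_\tau^2\geq\varepsilon^2$ is preserved under convex combinations and weak limits against the fixed vector $d_F\in L^2(\widehat{\tau})$, which is what keeps the minimal-norm element $h$ away from $0$; from there your uniform-convexity, $A$-centrality, and spectral-projection steps go through as you describe. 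In short: right skeleton, but the non-vanishing of the minimiser and the finite-dimension bound --- the two load-bearing estimates --- both rest on incorrect computations and need the repairs above.
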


If either of the statements mentioned in the above Theorem holds, we say \emph{a corner of $A$ embeds into $B$} inside $M$ and write $A\prec_M B$. We say $M$ is \emph{diffuse relative} to $B$ if $M\nprec_M B$.  
Using Theorem~\ref{pre:popathm}.(\ref{popa 3}), one can show the following lemma which we will use in Section~\ref{section: relative diffuse}.
\begin{lem} \label{pre:lem1}
    Assume $M$ has a faithful normal tracial state and $B\leq A \leq M$ are unital von Neumann subalgebras. Then $M\prec_M B$ implies $A\prec_A B$.
\end{lem}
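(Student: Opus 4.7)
The plan is to leverage characterization~(3) of Theorem~\ref{pre:popathm}. Assuming $M \prec_M B$, there is a nonzero $M$-$B$-subcorrespondence $K \leq L^2(M, \tau)$ with $\dim_B K < \infty$. The goal is to produce an $A$-$B$-subcorrespondence $K' \leq L^2(A, \tau)$ of finite right $B$-dimension, whence $A \prec_A B$ will follow by applying the same characterization to the inclusion $B \leq A$ inside $A$.

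Let $e_A \colon L^2(M, \tau) \to L^2(A, \tau)$ denote the orthogonal projection extending the conditional expectation $E_A \colon M \to A$, and set $K' := \overline{e_A(K)} \subseteq L^2(A, \tau)$. Since $e_A$ is an $A$-bimodule map and $B \subseteq A$, the subspace $K'$ inherits left $A$-invariance and right $B$-invariance from the $M$-$B$-invariance of $K$. To see $K' \neq 0$, pick $0 \neq \xi \in K$; density of $M$ in $L^2(M, \tau)$ produces $m \in M$ with $\tau(m\xi) \neq 0$, so $m\xi \in K$ by left $M$-invariance, and since $E_A$ preserves $\tau$ one has $\tau(e_A(m\xi)) = \tau(m\xi) \neq 0$, forcing $e_A(m\xi) \neq 0$.

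The main obstacle is the dimension bound $\dim_B K' < \infty$, which I would establish via polar decomposition. Consider $T := e_A|_K \colon K \to K'$, a bounded right $B$-linear operator with dense range. Taking the Hilbert-space polar decomposition $T = V|T|$, one observes that both $T$ and $T^*$ commute with the right $B$-action, so by continuous functional calculus $|T| = (T^*T)^{1/2}$ also commutes with right multiplication by $B$, and the polar-decomposition formula then shows that $V$ does too. Hence $V$ restricts to a right $B$-module unitary $(\ker T)^\perp \to K'$, yielding
\[
    \dim_B K' \;=\; \dim_B (\ker T)^\perp \;\leq\; \dim_B K \;<\; \infty.
\]
Applying Theorem~\ref{pre:popathm}.(3) with $A$ as the ambient algebra and $K'$ as the witnessing correspondence then yields $A \prec_A B$, completing the proof.
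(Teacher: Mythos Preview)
Your argument is correct and follows exactly the route the paper indicates: the paper does not give a detailed proof of this lemma but simply remarks that it follows from Theorem~\ref{pre:popathm}.(\ref{popa 3}), and your proof is a clean realization of that hint---project the witnessing $M$-$B$-subbimodule of $L^2(M,\tau)$ down to $L^2(A,\tau)$ via $e_A$ and verify that the image is a nonzero $A$-$B$-subbimodule of finite right $B$-dimension. The nonvanishing argument via left $M$-invariance and the dimension bound via polar decomposition of $e_A|_K$ are both standard and correctly executed.
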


\section{$(B,\eta)$-conjugate systems}\label{section: conjugate system}
Let $(M,\tau)$ be a tracial von Neumann algebra, let $B\leq M$ be a unital von Neumann subalgebra, and let $\x=(x_i)_{i\in I}\subset M$ be a tuple of self-adjoint operators indexed by a countable set $I$. We write $\bx$ for the $*$-algebra generated by $B$ and the set $\{x_i\colon i\in I\}$, and we will assume that $M=\bx''$. Since $M$ has a faithful normal tracial state, there exists a unique normal faithful conditional expectation $E_B:M\to B$ satisfying $\tau=\tau \circ E_B$. 
Let $\{e_{i,j}\in B(\ell^2(I))\colon i,j\in I\}$ be a family of matrix units and $\eta:B\to B\otimes B(\ell^2(I))$ be a covariance matrix on $B$ given by 
    \[\eta(b):= \sum_{i,j\in I} \eta_{ij}(b)\otimes e_{ij}.\]
We will still denote $\eta:=\eta \circ E_B :M\to B\otimes B(\ell^2(I)) \subset M\otimes B(\ell^2(I))$ and we assume $\eta$ is $\tau$-symmetric.
Recall from the Section~\ref{subsec:B-valued semicircular}, we define a $C^\ast$-correspondence $M\boxtimes_\eta M$ over $M$ and a von Neumann correspondence $L^2(M\boxtimes_\eta M,\tau)$ over $M$ with respect to $\eta$. The inner product on $L^2(M\boxtimes_\eta M,\tau)$ is 
    \[\< ae_ib ,ce_jd\>=\tau\left(\< ae_ib\mid ce_jd\>_M\right)=\tau\left(b^\ast \eta_{ij}(a^\ast c)d\right).\]
The norm induced by the above inner product is denoted by $\|\cdot \|_2$.

Now we define a partial derivative on $\bx$ with respect to a completely positive map $\eta$. Note that this previously appeared in work of Shlyakhtenko \cite{MR1737257} in the one variable case.

\begin{defi}\label{defi:partial derivative}
The \textbf{$\eta$-partial derivative} $\partial_\eta:\bx\to L^2(M\boxtimes_\eta M,\tau)$ is the linear mapping that satisfying
    \begin{align*}
        \pe(x_i) &= e_i \qquad i\in I,\\
        \pe(b) &=0 \qquad b\in B,
    \end{align*}
and the Leibniz rule $\partial_\eta(pq) = \partial_\eta(p)\cdot q+ p\cdot \partial_\eta(q)$ for $p,q\in\bx$. That is,
        \[ \pe(b_0x_{i_1}b_1\cdots x_{i_d}b_d)=\sum_{k=1}^d b_0x_{i_1}\cdots b_{k-1}e_{i_k}b_k\cdots x_{i_d}b_d. \]
\end{defi}

Note that $\pe$ may not be well-defined in general due to algebraic relations between $B$ and $\x$. One way to avoid this difficulty is to simply assume that $B$ and $\x$ are algebraically free, but this excludes $B$-valued semicircular operators which commute with $B$ (see \cite[Lemma 7.1]{Ito24}). Alternatively, Shlyakhtenko offers another approach that extends $M$ to a larger algebra where $\x$ and $B$ are algebraically free on \cite[Lemma 3.2]{MR1737257}. Another way is to define $\pe$ on abstract polynomials with coefficients in $B$ and then evaluate in the tuple $\x$ after, but it turns out that in the situation we will consider $\pe$ is always well defined (see Remark~\ref{rem:der well def}). 

With this derivative, we have a generalized definition of the conjugate variables from \cite[Definition 3.3]{MR1737257}.

\begin{defi}\label{defi: conj var}
    We say $\xi_i \in L^2(M,\tau)$ is the $i$-th $(B,\eta)$\textbf{-conjugate variable} for $\x$ if it satisfies
        \begin{align}\label{eqn:integration_by_parts}
            \< \xi_i , p \>_\tau = \< e_i ,\partial_\eta(p)\> \qquad p\in \bx.
        \end{align}
    More explicitly, for all $j_1,\ldots, j_d\in I$ and $b_0,b_1,\ldots, b_d\in B$ one has
        \[ \<\xi_i,b_0x_{j_1}b_1\cdots x_{j_d}b_d\>_\tau=\sum_{k=1}^d \tau\left(\eta_{ij_k}(b_0x_{j_1}\cdots b_{k-1})E_B\left(b_k\cdots x_{j_d}b_d\right)\right).\] 
    If such elements $\xi_i$ exist for each $i\in I$, we say the tuple $(\xi_i)_{i\in I}$ is a \textbf{$(B,\eta)$-conjugate system} for $\mathbf{x}$.
\end{defi}

In the case of $B=\C$ and $\eta_{ij}=\tau\delta_{ij}$, a $(B,\eta)$-conjugate system corresponds to the conjugate system for $\mathbf{x}$ introduced by Voiculescu in \cite[Definition 3.1]{Voi98}.
Note that (\ref{eqn:integration_by_parts}) uniquely determines $\xi_i$ because of the density of $\bx$ in $L^2(M,\tau)$, and it is equivalent to saying $\xi_i=\pea(e_i)$. Thus $e_i$ is in the domain of $\pea$ when the $i$-th $(B,\eta)$-conjugate variable exists, and, in fact, we will later see that $\bx\cdot e_i \cdot \bx \subset \dom(\pea)$ (see Proposition~\ref{prop:adjoint} below). 

We also remark that the $(B,\eta)$-conjugate variables are self-adjoint in the sense that $J_\tau\xi_i=\xi_i$. To see this, we define $J:L^2(M\boxtimes_\eta M,\tau) \to L^2(M\boxtimes_\eta M,\tau)$ by $J(ae_ib)=b^\ast e_i a^\ast$. Then $J$ is conjugate linear and the property $\tau(\eta_{ij}(a)b)=\tau(a\eta_{ji}(b))$ ensures that $J$ is an isometry, in particular $J$ is well-defined. One also has $J\pe(p) = \pe(p^*)$ for $p\in \bx$, which can be seen directly from the definition of the $\eta$-partial derivative. Thus we have 
    \[ \<p,J_\tau \xi_i\>_\tau =\<\xi_i,p^\ast\>_\tau = \<e_i, \pe(p^\ast)\> = \<e_i, J\pe(p)\>=\<\pe(p), Je_i\> =\< \pe(p),e_i\> = \<p,\xi_i\>_\tau\]
for all $p\in \bx$.

\begin{rem}\label{rem:der well def}
Consider the algebra $B\<\mathbf{t}\>$ as the (algebraic) free product of $B$ and the algebra of non-commutative polynomials in abstract self-adjoint variables $t_i$ for $i\in I$. One may consider a derivation $\tilde{\partial}_\eta$ from $B\<\mathbf{t}\>$ to $\bigoplus_{i\in I} B\<\mathbf{t}\>\otimes B\<\mathbf{t}\>$ determined by $\tilde{\partial}_\eta(b)=0$, $\tilde{\partial}_\eta(t_i)=(1\otimes 1)_i$, linearity, and the Leibniz rule. 
Define an evaluation map $\ev_\eta:\bigoplus_{i\in I} B\<\mathbf{t}\>\otimes B\<\mathbf{t}\>\to  M\boxtimes_\eta M  $ by $\ev_\eta((a\otimes b)_i) = \ev_\x(a)e_i \ev_\x(b)$ where the evaluation map $\ev_\x\colon B\<\mathbf{t}\>\to \bx$ is given by mapping $t_i\to x_i$.
    \[
    \begin{tikzcd}[row sep=large, column sep=large] 
    B\<\mathbf{t}\> \arrow{r}{\tilde{\partial}_\eta} \arrow[swap]{d}{\ev_\x} & \displaystyle\bigoplus_{i\in I}B\<\mathbf{t}\>\otimes B\<\mathbf{t}\> \arrow{d}{\ev_\eta} \\
B\<\x\>  & M\boxtimes_\eta M
    \end{tikzcd}
    \] 
Then one could define the $i$-th $(B,\eta)$-conjugate system $\xi_i\in L^2(M,\tau)$ satisfying:
    \[ \< \xi_i, \ev_\x(P)\>_\tau = \< e_i, \ev_\eta\circ\tilde{\partial}_\eta(P)\>\]
for all $P\in B\<\mathbf{t}\>$.
If a $(B,\eta)$-conjugate system exists, then $\ker(\ev_\x)\subset \ker(\ev_\eta\circ\tilde{\partial}_\eta)$ by the following computation: if $\ev_\x(P(\mathbf{t}))=0$, then for any $P_1,P_2\in B\<\mathbf{t}\>$ and $i\in I$ one has
\begin{align*}
    0&=\<\ev_\x(P_1)\ev_\x(P)\ev_\x(P_2),\xi_i  \>_\tau = \< \ev_\x(P_1PP_2),\xi_i\>_\tau \\
    &= \<(\ev_\eta\circ\tilde{\partial}_\eta)(P_1PP_2),e_i\>\\
    &=\< \ev_\eta\left( \tilde{\partial}_\eta(P_1)PP_2 +P_1\tilde{\partial}_\eta(P)P_2 +P_1P\tilde{\partial}_\eta(P_2)\right), e_i\>\\
    &= \< (\ev_\eta\circ \tilde{\partial}_\eta)(P_1)\cdot \ev_\x(P)\ev_\x(P_2) +\ev_\x(P_1)\cdot (\ev_\eta\circ\tilde{\partial}_\eta)(P)\cdot \ev_\x(P_2)+\ev_\x(P_1)\ev_\x(P)\cdot (\ev_\eta \tilde{\partial}_\eta)(P_2), e_i\>\\
    &= \<\ev_\x(P_1)\cdot (\ev_\eta\circ\tilde{\partial}_\eta)(P)\cdot  \ev_\x(P_2) , e_i\>\\
    &= \<(\ev_\eta\circ\tilde{\partial}_\eta)(P), \ev_\x(P_1)^\ast e_i \ev_\x(P_2)^\ast \> = \<(\ev_\eta\circ\tilde{\partial}_\eta)(P), \ev_\eta((P_1^*\otimes P_2^*)_i) \>
\end{align*}
Since the image of $\ev_\eta$ is dense in $M\boxtimes_\eta M$, hence dense in $L^2(M\boxtimes_\eta M,\tau)$, the above computation shows that $\ev_\eta\circ\tilde{\partial}_\eta(P)=0$.
Thus, $\tilde{\partial}_\eta$ factors through to the quotient $B\<\mathbf{t}\> / \ker(\ev_\x)\cong B\<\x\>$ and the resulting (well-defined) map is precisely $\pe$. $\hfill\blacksquare$
\end{rem}

We will next produce a characterization for the $(B,\eta)$-conjugate system using $B$-valued cumulants, which will help us yield concrete examples. First note that the $B$-valued cumulant function $\kappa_B$ can be partially extended to $L^2(M,\tau)$  as follows. For $a\in M$ we have  
    \[
        \| E_B(a)\| _\tau^2 =\tau\left(E_B(a)^\ast E_B(a)\right)\leq \tau\circ E_B(a^\ast a) =\tau(a^\ast a)=\| a\| _\tau^2
    \]
so that $E_B$ can be extended to a bounded linear map $E_B\colon L^2(M,\tau)\to L^2(B,\tau)$. Moreover, this map remains $B$-bimodular: for $b_1,b_2\in B$ and $(a_n)_{n\in \N}\subset M$ converging to $\xi\in L^2(M,\tau)$ one has
    \[
        E_B(b_1\cdot \xi \cdot b_2) = \lim_{n\to\infty} E_B(b_1 a_n b_2) = \lim_{n\to\infty} b_1E_B(a_n)b_2 = b_1\cdot E_B(\xi)\cdot b_2.
    \]
It follows that $E_B^{(d)}$ for each $d\in \N$ can be extended to allow one input to be from $L^2(M,\tau)$ (and the rest from $M$), with the output valued in $L^2(B,\tau)$. Using the moment-cumulant formula, one can then likewise extend each $\kappa_B^{(d)}$, and $\tau$ also has been extended to $L^2(M,\tau)$. We have the following cumulant characterization.

\begin{thm}[Theorem~\ref{thm c}]\label{thm:cumulants}
Let $(M,\tau)$ be a tracial von Neumann algebra generated by a von Neumann algebra $B$ and a tuple of self-adjoint operators $\x=(x_i)_{i\in I}$. Then $\xi_i\in L^2(M,\tau)$ is the $i$-th $(B,\eta)$-conjugate variable for $\x$ if and only if for all $b,b_1,\ldots, b_d\in B$ and $j,i_1,\ldots, i_d\in I$ one has
\begin{align}\label{eq:cumulants char}
    \begin{cases}
        \cum^{(1)}(\xi_i b) = 0,  \\
        \cum^{(2)}(\xi_i \otimes bx_j)=\eta_{ij}(b), \\ 
        \cum^{(d+1)}(\xi_i \otimes b_1x_{i_1} \otimes \cdots \otimes b_dx_{i_d})=0 ,d\geq 2
    \end{cases}.
\end{align}
\end{thm}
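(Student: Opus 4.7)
The plan is to reduce both directions to a single combinatorial identity governing $E_B(\xi_i \cdot p)$ for monomials $p = b_0 x_{j_1} b_1 \cdots x_{j_d} b_d$, using the moment-cumulant formula (\ref{pre:moment cumulant d}). Placing $\xi_i$ at position $0$ and $x_{j_k}$ at position $k$, I would split $NC(d+1)$ by the block $V_0 \ni 0$. If $V_0 = \{0, k_1, \ldots, k_m\}$ with $k_1 < \cdots < k_m$, the non-crossing condition forces the remaining blocks to lie entirely within the gaps $(k_{\ell - 1}, k_\ell)$ for $\ell = 1, \ldots, m+1$ (with $k_0 := 0$ and $k_{m+1} := d + 1$). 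Summing inner partitions over each gap collapses, by the moment-cumulant formula, to the segment moment
\[ c_\ell := E_B\bigl(b_{k_{\ell - 1}} x_{j_{k_{\ell - 1} + 1}} \cdots x_{j_{k_\ell - 1}} b_{k_\ell - 1}\bigr) \in B \]
for $1 \leq \ell \leq m$, and similarly $c_{m+1} := E_B(b_{k_m} x_{j_{k_m + 1}} \cdots x_{j_d} b_d)$. Using the $B$-bimodularity of $\cum^{(m+1)}$, the aggregate contribution of all $\pi$ with this fixed $V_0$ simplifies to $\cum^{(m+1)}(\xi_i \otimes c_1 x_{j_{k_1}} \otimes \cdots \otimes c_m x_{j_{k_m}}) \cdot c_{m+1}$.

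For the ``if'' direction, assume the three cumulant conditions. Only blocks $V_0$ of size two contribute, since $\cum^{(1)}(\xi_i b) = 0$ and $\cum^{(m+1)}(\xi_i \otimes \cdots) = 0$ for $m \geq 2$ by hypothesis. For $V_0 = \{0, k\}$ the contribution is $\cum^{(2)}(\xi_i \otimes c_1 x_{j_k}) c_2 = \eta_{i j_k}(c_1) c_2$; using $\eta = \eta \circ E_B$ on $M$, $\eta_{i j_k}(c_1) = \eta_{i j_k}(b_0 x_{j_1} \cdots b_{k-1})$. Applying $\tau$ and summing over $k$ reproduces exactly the right-hand side of the defining identity (\ref{eqn:integration_by_parts}), so $\xi_i$ is the $i$-th $(B, \eta)$-conjugate variable.

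For the ``only if'' direction, I would induct on $d$. The base case $d = 0$ follows from (\ref{eqn:integration_by_parts}) with $p = b$: the identity $\<\xi_i, b\>_\tau = \<e_i, \pe(b)\> = 0$ for all $b \in B$ forces $E_B(\xi_i) = 0$, hence $\cum^{(1)}(\xi_i b) = E_B(\xi_i) b = 0$ by bimodularity. For $d \geq 1$, the combinatorial identity of the first paragraph isolates exactly one ``top-order'' contribution $\tau(\cum^{(d+1)}(\xi_i \otimes b_0 x_{j_1} \otimes \cdots \otimes b_{d - 1} x_{j_d}) b_d)$ corresponding to $V_0 = \{0, 1, \ldots, d\}$; the remaining contributions involve cumulants $\cum^{(k+1)}$ with $k < d$, whose values are pinned down by the induction hypothesis. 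Substituting these known values shows the remaining contributions already sum to the right-hand side of (\ref{eqn:integration_by_parts}), forcing the top-order term to vanish. Varying $b_d$ and invoking faithfulness of $\tau$ on $L^2(B, \tau)$ then promotes this scalar vanishing to the $B$-valued identity $\cum^{(d+1)} = 0$ when $d \geq 2$; the case $d = 1$ is handled analogously, where comparing with the direct evaluation $\<e_i, \pe(b_0 x_{j_1} b_1)\> = \tau(\eta_{i j_1}(b_0) b_1)$ yields $\cum^{(2)}(\xi_i \otimes b_0 x_{j_1}) = \eta_{i j_1}(b_0)$.

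The main technical obstacle is the first paragraph's combinatorial identity: correctly tracking how $B$-coefficients flow through the nested recursion defining $\hat{\kappa}_B$ and verifying that inner sums over each gap collapse to segment moments via (\ref{pre:moment cumulant d}). Once that identity is in place, both directions follow by substitution and induction; the remainder of the argument amounts to extracting scalar vanishing statements from (\ref{eqn:integration_by_parts}) and promoting them to $B$-valued ones via faithfulness of $\tau$ together with $B$-bimodularity of cumulants.
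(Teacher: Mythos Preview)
Your proposal is correct and follows essentially the same approach as the paper: both arguments expand $E_B(\xi_i\, b_0 x_{j_1}\cdots x_{j_d} b_d)$ via the moment--cumulant formula, decompose $NC(d+1)$ according to the block containing $\xi_i$, collapse the inner partitions in each gap to segment moments, and then run an induction on $d$ for the ``only if'' direction while observing that only size-two blocks survive for the ``if'' direction. The only difference is organizational---you isolate the combinatorial identity once and reuse it, whereas the paper carries out the same computation separately in each direction---but the mathematical content is identical.
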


\begin{proof}
If $\xi_i$ is a conjugate variable for $x_i$, for any $b'\in B$,$$\tau(\cum^{(1)}(\xi_i b)b')= \tau(E_B(\xi_i b)b')=\< \xi_i , bb'\>_\tau=\< e_i , \pe(bb')\> =0,$$ hence $\cum^{(1)}(\xi_i b)=0$. The second part is, using the equation (\ref{pre:cumulant 2}),
\begin{align*}
    \tau\left(\cum^{(2)}(\xi_i \otimes bx_j)b'\right) &= \tau\left(E_B^{(2)}(\xi_i \otimes bx_jb')-\cum^{(1)}(\xi_i)\cum^{(1)}(bx_jb')\right)\\\
    &=\tau\left(E_B(\xi_ibx_jb')\right)\\
    &=\<\xi_i, bx_jb'\>_\tau\\
    &=\< e_i ,\pe(bx_jb')\>\\
    &= \< e_i ,be_jb'\> = \tau\left(\eta_{ij}(b)b'\right)
\end{align*}
The remaining formulas will be established by induction on $d\geq 2$. 
For $d>2$, recall from the moment-cumulant formula (\ref{pre:moment cumulant d}), one has  
\begin{align*} 
    E_B^{(d+1)}(\xi_i\otimes b_1x_{i_1}\otimes \cdots \otimes b_dx_{i_d}) =\cum^{(d+1)}(\xi_i\otimes b_1x_{i_1}\otimes \cdots \otimes b_kx_{i_d} ) + \sum_{\substack{\pi\in NC(d+1)\\ \pi \not\equiv 1_{d+1}}}\cum (\pi)[\xi_i\otimes b_1x_{i_1}\otimes \cdots \otimes  b_dx_{i_d}].
\end{align*}
Then assume by induction that $\cum^{(k+1)}(\xi_i\otimes b_1x_{i_1}\otimes \cdots\otimes b_{k}x_{i_k})=0$ for all $2\leq k\leq d-1$. Then
$\xi_i$ has to be in a block of $\pi$ of size $2$ in order for $\cum(\pi)[\xi_i\otimes b_1x_{i_1}\otimes \cdots \otimes b_kx_{i_k}]$ to be non-zero. Decompose each such $\pi$ as $\pi=\{1,k+1\}\cup \pi_1\cup \pi_2$, where the block $\{1,k+1\}$ connects $\xi_i$ and $x_k$, $\pi_1\in NC(k-1)$ is viewed as a partition on $\{2,\ldots, k\}$ and $\pi_2\in NC(d-k)$ is viewed as a partition on $\{k+2,\ldots, d+1\}$. Then the second term of above equation becomes
\begin{align}\label{the:cumulants eq1}
    \sum_{k=1}^d\cum^{(2)}\bigg(\xi_i \sum_{\pi_1\in NC(k-1)}\cum(\pi_1)[b_1x_{i_1}\otimes \cdots\otimes  b_{k-1}&x_{i_{k-1}}] \\
   & \otimes b_kx_{i_k} \sum_{\pi_2\in NC(d-k)}\cum(\pi_2)[b_{k+1}x_{i_{k+1}}\otimes \cdots \otimes b_dx_{i_d}] \bigg)\nonumber
\end{align}

\begin{align*}
    &=\sum_{k=1}^d\cum^{(2)} \left( \xi_i E_B^{(k-1)}\left(b_1x_{i_1}\otimes \cdots \otimes b_{k-1}x_{i_{k-1}}\right)\otimes b_kx_{i_k} E_B^{(d-k)}\left(b_{k+1}x_{i_{k+1}}\otimes \cdots \otimes b_dx_{i_d}\right)\right) \\
   & =\sum_{k=1}^d\eta_{ii_k}(b_1x_{i_1}\cdots b_{k-1}x_{i_{k-1}}b_k) E_B(b_{k+1}x_{i_{k+1}} \cdots  b_dx_{i_d}).
\end{align*}
Thus for any $b'\in B$ we have
\begin{align*}
    &\tau\left( E_B^{(d+1)}(\xi_i\otimes b_1x_{i_1} \otimes \cdots \otimes b_dx_{i_d})b'\right) \\
    &=\tau\left(\cum^{(d+1)}(\xi_i \otimes  b_1 \otimes \cdots \otimes b_d x_{i_d})b'\right)+\sum_{k=1}^d\tau\left(\eta_{ii_k}(b_1x_{i_1}\cdots b_{k-1}x_{i_{k-1}}b_k) E_B(b_{k+1}x_{i_{k+1}} \cdots  b_dx_{i_d}) b'\right).
\end{align*}
On the other hand, by the definition of a $(B,\eta)$-conjugate variable we have
    \[\tau\left( E_B^{(d+1)}(\xi_i\otimes b_1x_{i_1} \otimes \cdots \otimes b_dx_{i_d})b'\right)=\sum_{k=1}^d\tau\left(\eta_{ii_k}(b_1x_{i_1}\cdots b_{k-1}x_{i_{k-1}}b_k) E_B(b_{k+1}x_{i_{k+1}} \cdots  b_dx_{i_d}) b'\right)\]
So we have $\tau\left(\cum^{(d+1)}(\xi_i \otimes  b_1 x_{i_1}\otimes \cdots \otimes b_d x_{i_d})b'\right)=0$, therefore $\cum^{(d+1)}(\xi_i \otimes  b_1x_{i_1} \otimes \cdots \otimes b_dx_{i_d})$ is zero for all $d\geq 2$.

Conversely, assume $\xi_i$ satisfies (\ref{eq:cumulants char}). It suffices to check $\<\xi_i,p\>_\tau = \<e_i, \pe(p)\>$ for polynomials of the form $p=b_0x_{i_1}b_1\cdots b_dx_{i_d}$. Via the moment-cumulant formula, we have
\begin{align*}
    \<\xi_i,p\>_\tau&=\tau(E_B(\xi_ib_0x_{i_1}b_1\cdots b_dx_{i_d}))=\tau(E_B^{(d+1)}(\xi_i\otimes b_0x_{i_1}\otimes \cdots \otimes x_{i_d}b_d))\\
    &=\sum_{\pi\in NC(d+1)}\tau\left(\cum (\pi)[\xi_i\otimes b_0x_{i_1}\otimes \cdots \otimes x_{i_d}b_d]\right)
\end{align*}
Since the cumulants of order $d\neq 2$ are all zero, the same calculation as (\ref{the:cumulants eq1}) gives \[\<\xi_i,p\>_\tau=\sum_{k=1}^d\tau\left(\eta_{ii_k}(b_0x_{i_1}\cdots x_{i_{k-1}}b_{k-1}) E_B(b_kx_{i_{k+1}} \cdots  x_{i_d}b_d)\right) = \<e_i,\pe(p)\>.\]
Hence $\xi_i$ is the $i$-th $(B,\eta)$-conjugate variable for $\x$.
\end{proof}

\begin{ex}\label{ex:semicircular}
    Let $\mathbf{s}=(s_i)_{i\in I}$ is a $B$-valued semicircular family with covariance $\eta$. Then $\mathbf{s}$ is a $(B,\eta)$-conjugate system for itself. That is, $\<s_i, p\>_2=\<e_i,\pe(p)\>$ for any $p\in B\<\mathbf{s}\>$. To see this, recall from \cite[Definition 4.2.3]{MR1407898} that the definition of such a family, which he calls $B$-Gaussian with covariance matrix $\eta$, is one satisfying $\cum^{(2)}(s_i\otimes bs_j)=\eta_{ij}(b)$ and $\cum^{(d+1)}(s_i\otimes b_1s_1\otimes \cdots \otimes b_ds_d)=0$ for $d\neq 2$. (This is only stated for a finite set $I$ but it naturally extends to infinite sets since any given cumulant only involves finitely many indices.) Thus Theorem~\ref{thm:cumulants} implies $\mathbf{s}$ is a $(B,\eta)$-conjugate system. This generalizes that a $B$-semicircular operator satisfies an integration-by-parts formula from \cite[Proposition 3.10]{MR1737257}.

\end{ex}

\begin{ex}\label{ex:amalgamation}
    Let $\mathbf{s}\subset \Phi(B,\eta)$ be a $B$-valued semicircular operators with covariance $\eta$ (see Section~\ref{subsec:B-valued semicircular}) and consider the amalgamated free product
    \[N=M\ast_B \Phi(B,\eta).\]
    For $t>0$ and each $i\in I$ define $x_i(t):=x_i+\sqrt{t}s_i$, and let $N_t=B\<x_i(t)\colon i\in I\>''$. The trace on $M$ extends to $N$, and so there exists a unique trace preserving conditional expectation $E_t:N\to N_t$. Also, $E_B=E_B\circ E_t$ follows from $E_B$ and $E_t$ both being trace preserving. Then $(\frac{1}{\sqrt{t}}E_t(s_i))_{i\in I}$ is a $(B,\eta)$-conjugate system for $(x_i(t))_{i\in I}$. In fact, by the Theorem~\ref{thm:cumulants}, it suffices to compute its $B$-valued cumulants. Note that $E_B(\frac{1}{\sqrt{t}}E_t(s_i)a)=E_B\circ E_t(\frac{1}{\sqrt{t}}s_ia)=E_B(\frac{1}{\sqrt{t}}s_i a)$ for $a\in N_t$. Also, it was shown in Example~\ref{ex:semicircular} that $s_i$ is the $i$-th $(B,\eta$)-conjugate variable for $\mathbf{s}$, so its $B$-valued cumulants are given by Theorem~\ref{thm:cumulants}. Using this and freeness with amalgamation over $B$,  we have for $b\in B$
    \[\cum^{(1)}\left(\frac{1}{\sqrt{t}}E_t(s_i)b\right)=E_B\left(\frac{1}{\sqrt{t}}E_t(s_i)b\right)= \frac{1}{\sqrt{t}} E_B(s_i)b=0,\]
and 
\begin{align*}
    \cum^{(2)}\left(\frac{1}{\sqrt{t}}E_t(s_i)\otimes b_1x_{i_1}(t)\right) &=E_B\left(\frac{1}{\sqrt{t}}s_ib_1x_{i_1}(t)\right)-E_B\left(\frac{1}{\sqrt{t}}s_i\right)E_B\left(b_1x_{i_1}(t)\right)\\
    &=\cum^{(2)}\left(\frac{1}{\sqrt{t}}s_i\otimes b_1x_{i_1}(t)\right)\\
    &=\cum^{(2)}\left(\frac{1}{\sqrt{t}}s_i\otimes b_1(\sqrt{t}s_{i_1})\right) \\
    &=\eta_{ii_1}(b_1).
\end{align*}
    Similarly,  
\begin{align*}
    \cum^{(d+1)}\left(\frac{1}{\sqrt{t}}E_t(s_i)\otimes b_1x_{i_1}(t)\otimes \cdots \otimes b_dx_{i_d}(t)\right)
    &=\cum^{(d+1)}\left(\frac{1}{\sqrt{t}}s_i\otimes b_1x_{i_1}(t)\otimes \cdots \otimes b_dx_{i_d}(t)\right)\\
    &=\cum^{(d+1)}\left(\frac{1}{\sqrt{t}}s_i\otimes b_1(\sqrt{t}s_{i_1})\otimes \cdots \otimes b_d(\sqrt{t}s_{i_d})\right)\\
    &=0
\end{align*}
    for $d\geq 2$. Hence we have a $(B,\eta)$-conjugate system $(\frac{1}{\sqrt{t}}E_t(s_i))_{i\in I}$ for $(x_i(t))_{i\in I}$. 
\end{ex} 
Using Theorem~\ref{thm:cumulants}, we characterize a conjugate system on tensor products.
\begin{thm}
    Let $(M,\tau)$ be a tracial von Neumann algebra generated by $B$ and a tuple of self-adjoint operators $\x=(x_i)_{i\in I}$ and $(N,\tilde{\tau})$ be another tracial von Neumann algebra. Then $\x$ admits a $(B,\eta)$-conjugate system if and only if $\x\otimes 1 $ admits a $(B\overline\otimes N,\eta\otimes 1_N)$-conjugate system.
\end{thm}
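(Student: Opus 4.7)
The plan is to verify both directions directly from the integration-by-parts definition of Definition~\ref{defi: conj var}, using the trace-preserving conditional expectation $E_M:=\mathrm{id}_M\otimes\tilde\tau\colon M\overline\otimes N\to M$ to transfer conjugate variables across the tensor product. For the forward direction the candidate for the $i$-th $(B\overline\otimes N,\eta\otimes 1_N)$-conjugate variable is $\xi_i\otimes 1$, while for the backward direction the candidate for the $i$-th $(B,\eta)$-conjugate variable is $E_M(\tilde{\xi}_i)\in L^2(M,\tau)$, where $(\tilde{\xi}_i)_{i\in I}$ is the given $(B\overline\otimes N,\eta\otimes 1_N)$-conjugate system.

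For the forward direction, since the $N$-part commutes with $\x\otimes 1$, every element of $(B\overline\otimes N)\<\x\otimes 1\>$ is a linear combination of elements of the form $p\otimes n$ with $p\in\bx$ and $n\in N$, so it suffices to check
    \[\<\xi_i\otimes 1,p\otimes n\>_{\tau\otimes\tilde\tau}=\<e_i,\partial_{\eta\otimes 1_N}(p\otimes n)\>.\]
Writing $p\otimes n=(p\otimes 1)(1\otimes n)$ and invoking the Leibniz rule together with $\partial_{\eta\otimes 1_N}(1\otimes n)=0$ yields $\partial_{\eta\otimes 1_N}(p\otimes n)=\partial_{\eta\otimes 1_N}(p\otimes 1)\cdot(1\otimes n)$. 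Using that $E_B\otimes\mathrm{id}_N$ is the conditional expectation onto $B\overline\otimes N$, the identity $(\eta\otimes 1_N)_{ij}(a\otimes 1)=\eta_{ij}(a)\otimes 1$, and the bimodular inner product on the $M\overline\otimes N$-correspondence, one then finds
    \[\<e_i,\partial_{\eta\otimes 1_N}(p\otimes n)\>=\tilde\tau(n)\<e_i,\pe(p)\>,\]
which matches $\<\xi_i\otimes 1,p\otimes n\>_{\tau\otimes\tilde\tau}=\tilde\tau(n)\<\xi_i,p\>_\tau=\tilde\tau(n)\<e_i,\pe(p)\>$ by the conjugate variable property of $\xi_i$.

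For the backward direction, $E_M$ extends to the orthogonal projection $L^2(M\overline\otimes N,\tau\otimes\tilde\tau)\to L^2(M,\tau)$, so $\xi_i:=E_M(\tilde{\xi}_i)$ lies in $L^2(M,\tau)$. For $p\in\bx$, the trace-preserving conditional expectation property and the fact that $p\otimes 1\in M\otimes 1$ give
    \[\<\xi_i,p\>_\tau=(\tau\otimes\tilde\tau)(\tilde{\xi}_i^{\ast}(p\otimes 1))=\<\tilde{\xi}_i,p\otimes 1\>_{\tau\otimes\tilde\tau}=\<e_i,\partial_{\eta\otimes 1_N}(p\otimes 1)\>=\<e_i,\pe(p)\>,\]
where the third equality uses the conjugate variable property of $\tilde{\xi}_i$ and the fourth is the $n=1$ instance of the compatibility established in the forward direction.

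The argument is largely bookkeeping once the correct candidates are identified; the main technical step, which appears in both directions, is verifying the compatibility identity $\<e_i,\partial_{\eta\otimes 1_N}(p\otimes 1)\>=\<e_i,\pe(p)\>$. This requires carefully tracking how $\eta\otimes 1_N$ extends from $B\overline\otimes N$ to $M\overline\otimes N$ via $E_B\otimes\mathrm{id}_N$ and how vectors of the form $(a\otimes 1)e_i(b\otimes 1)$ inside $L^2((M\overline\otimes N)\boxtimes_{\eta\otimes 1_N}(M\overline\otimes N),\tau\otimes\tilde\tau)$ pair with $e_i$. A parallel cumulant-based proof via Theorem~\ref{thm:cumulants} is also available, whose key lemma is the multiplicativity identity $\kappa_{B\overline\otimes N}^{(d)}((a_1\otimes n_1)\otimes\cdots\otimes(a_d\otimes n_d))=\kappa_B^{(d)}(a_1\otimes\cdots\otimes a_d)\otimes n_1\cdots n_d$, proved by induction from the corresponding multiplicativity of the moment functions.
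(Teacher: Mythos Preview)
Your proof is correct. The backward direction is identical to the paper's: both project the $(B\overline\otimes N,\eta\otimes 1_N)$-conjugate variable onto $L^2(M,\tau)\cong L^2(M)\otimes 1_N$ and verify Definition~\ref{defi: conj var} directly against monomials $p\otimes 1$.

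The forward direction differs. The paper proceeds via the cumulant characterization (Theorem~\ref{thm:cumulants}): it first establishes the multiplicativity identity
\[
\kappa_{B\overline\otimes N}^{(d)}\bigl((y_1\otimes z_1)\otimes\cdots\otimes(y_d\otimes z_d)\bigr)=\kappa_B^{(d)}(y_1\otimes\cdots\otimes y_d)\otimes z_1\cdots z_d
\]
(using that $1\otimes z_k\in B\overline\otimes N$ can be absorbed to the right) and then reads off the cumulant conditions for $\xi_i\otimes 1$ from those for $\xi_i$. You instead verify the integration-by-parts formula directly, reducing to test elements $p\otimes n$ via the commutation of $1\otimes N$ with $M\otimes 1$, and checking the compatibility $\<e_i,\partial_{\eta\otimes 1_N}(p\otimes 1)\>=\<e_i,\pe(p)\>$ by unwinding the inner product on $L^2((M\overline\otimes N)\boxtimes_{\eta\otimes 1_N}(M\overline\otimes N),\tau\otimes\tilde\tau)$. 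Your route is slightly more elementary in that it avoids invoking Theorem~\ref{thm:cumulants}, while the paper's route makes the structure more transparent and illustrates the utility of the cumulant characterization. You in fact mention the cumulant alternative at the end, so you have both arguments in hand.
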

\begin{proof}
    Assume that $\xi_i$ is a $i$-th $(B,\eta)$-conjugate variable for $\x$. By Theorem~\ref{thm:cumulants}, it suffices to check cumulants. Note that for $y_i\in L^2(M,\tau), z_i\in N$, $i=1,...,d$,
    \begin{align*}
        \kappa_{B\otimes N}^{(d)}\left( (y_1\otimes z_1)\otimes \cdots \otimes (y_d\otimes z_d) \right) &= \kappa_{B\otimes N}^{(d)}\left( \left((y_1\otimes 1)\otimes (1\otimes z_1) \right)\otimes \cdots \otimes \left( (y_d\otimes 1)\otimes (1\otimes z_d) \right) \right) \\
        &=  \kappa_{B\otimes N}^{(d)}\left( (y_1\otimes 1)\otimes   \cdots \otimes (y_d\otimes 1)\cdot (1\otimes z_1\cdots z_d)  \right) \\
        &= \kappa_{B\otimes N}^{(d)}\left( (y_1\otimes 1)\otimes  \cdots \otimes (y_d\otimes 1)\right)\cdot (1\otimes z_1\cdots z_d) \\
        &= (\cum^{(d)}(y_1\otimes \cdots \otimes y_d)\otimes 1)\cdot (1\otimes z_1\cdots z_d)\\
        &= \cum^{(d)}(y_1\otimes \cdots \otimes y_d)\otimes z_1\cdots z_d.
    \end{align*}
    Then $\xi_i\otimes 1$ is $i$-th $(B\overline\otimes N, \eta\otimes 1_N)$-conjugate variable for $\x \otimes 1$ since 
    \[\kappa_{B\otimes N}^{(d+1)}\left( (\xi_i\otimes 1)\otimes (b_1x_{i_1}\otimes z_1)\otimes \cdots \otimes  (b_dx_{i_d}\otimes z_d) \right) = \cum^{(d)}(\xi_i\otimes  b_1x_{i_1} \otimes \cdots \otimes b_dx_{i_d})\otimes z_1\cdots z_d=0\] for $d\neq 1$ and
    \[\kappa^{(2)}_{B\otimes N}((\xi_i\otimes 1)\otimes (bx_j\otimes y))=\kappa^{(2)}_{B\otimes N}((\xi_i\otimes 1)\otimes((bx_j\otimes 1)\otimes (1\otimes y)))=\cum^{(2)}(\xi_i\otimes bx_j)\otimes y = \eta_{ij}(b)\otimes y. \]
   
    Conversely, assume that $\zeta_i$ is an $i$-th $(B\overline\otimes N, \eta\otimes 1_N)$-conjugate system for $\x\otimes 1$ and let $q$ be the projection from $L^2(M\overline\otimes N)$ onto $L^2(M)\cong L^2(M)\otimes 1_N$. Then $q\zeta_i$ is an $i$-th $(B,\eta)$-conjugate variable for $\x$. In fact, for a monomial $p=b_0x_{i_1}b_1\cdots x_{i_d}b_d \in \bx$, one has 
    \begin{align*}
        \< q\zeta_i, b_0x_{i_1}b_1\cdots x_{i_d}b_d\>_\tau&=\<\zeta_i,b_0x_{i_1}\cdots x_{i_d}b_d\otimes 1\>_{\tau\otimes \tilde{\tau}}\\
        &= \sum_{k=1}^d\tau\otimes \tilde{\tau}\left(\eta_{ii_k}(b_0x_{i_1}\cdots b_{k-1})b_k\cdots x_{i_d}b_d \otimes 1\right)\\
        &= \sum_{k=1}^d\tau \left(\eta_{ii_k}(b_0x_{i_1}\cdots b_{k-1})b_k\cdots x_{i_d}b_d \right). 
    \end{align*}
    This completes the proof.
\end{proof}

\begin{cor}\label{cor: scalar}
     Let $(M,\tau)$ be a tracial von Neumann algebra generated by $B$ and a tuple of self-adjoint operators $\x=(x_i)_{i\in I}$. Then $\x$ admits a $(\C, (\delta_{ij}\tau)_{i,j\in I})$-conjugate system if and only if $\x\otimes 1$ admits a $(\C\otimes B, (\tau\otimes 1_B)_{ij})$-conjugate system.
\end{cor}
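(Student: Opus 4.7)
The plan is to obtain the corollary as a direct application of the preceding theorem, specializing its base algebra to $\C$ and its auxiliary tracial von Neumann algebra $(N,\tilde\tau)$ to $(B,\tau|_B)$, with covariance $\eta=(\delta_{ij}\tau)_{i,j\in I}$. Since the theorem requires the ambient algebra to be generated by its base algebra together with $\x$, I would first pass to $M_0:=W^\ast(\x)\leq M$, viewed as a tracial von Neumann algebra with trace $\tau|_{M_0}$; by construction $M_0$ is generated by $\C$ and $\x$, so the hypotheses of the theorem are met with these specializations.

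Feeding these choices into the theorem yields: $\x$ admits a $(\C,(\delta_{ij}\tau)_{ij})$-conjugate system in $L^2(M_0,\tau)$ if and only if $\x\otimes 1$ admits a $(\C\overline\otimes B,(\delta_{ij}\tau)_{ij}\otimes 1_B)$-conjugate system in $L^2(M_0\overline\otimes B,\tau\otimes \tau|_B)$. After identifying $\C\overline\otimes B\cong B$ and observing that the covariance $(\delta_{ij}\tau)_{ij}\otimes 1_B$ coincides with $(\delta_{ij}(\tau\otimes 1_B))_{ij}$, this is precisely the biconditional appearing in the statement of the corollary.

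The only point worth a brief check is that the existence of a conjugate variable in $L^2(M_0,\tau)$ is equivalent to its existence in the ambient $L^2(M,\tau)$, and analogously on the tensored side with $L^2(M_0\overline\otimes B,\tau\otimes\tau|_B)\subset L^2(M\overline\otimes B,\tau\otimes\tau|_B)$. Both directions follow at once from the uniqueness remark after Definition~\ref{defi: conj var}: the integration-by-parts identity on $\C\langle\x\rangle$ uniquely determines the conjugate variable, so any solution in the larger $L^2$-space must agree with its image under the trace-preserving conditional expectation onto the smaller one. I do not foresee any real obstacle in carrying this out; the content of the corollary is entirely packaged in the preceding theorem once the right substitutions are made.
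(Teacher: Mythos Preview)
Your proposal is correct and is exactly the intended derivation: the paper states the corollary without proof because it is an immediate specialization of the preceding tensor-product theorem with base algebra $\C$, covariance $(\delta_{ij}\tau)_{ij}$, and auxiliary algebra $N=B$. Your additional remark about passing to $M_0=W^\ast(\x)$ to meet the theorem's generation hypothesis, and the reduction via uniqueness of conjugate variables to move between $L^2(M_0)$ and $L^2(M)$, is a sensible clarification that the paper leaves implicit.
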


\begin{cor}
    Let $(M,\tau)$ be a tracial von Neumann algebra generated by $B$ and a tuple of self-adjoint operators $\x=(x_i)_{i\in I}$ with $|I|>1$. Then $\x$ admits a $(\C, (\delta_{ij}\tau)_{i,j\in I})$-conjugate system if and only if $\C\<\x\>$ and $B$ are tensor independent with respect to $\tau$ and $\x$ admits a $(B,(\delta_{ij}E_B)_{i,j})$-conjugate system.
\end{cor}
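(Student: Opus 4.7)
The plan is to prove both directions of the biconditional separately, using the cumulant characterization from Theorem~\ref{thm:cumulants} and the equivalence in Corollary~\ref{cor: scalar} as the main tools, while the forward direction additionally invokes Theorem~\ref{thm a}.

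For the reverse direction, assume $\C\<\x\>$ and $B$ are tensor independent and let $(\xi_i)_{i\in I}$ be a $(B,\delta_{ij}E_B)$-conjugate system. Tensor independence lets me write $M$ canonically as the spatial tensor product $W^\ast(\x) \bar\otimes B$ with product trace $\tau = \tau|_{W^\ast(\x)} \otimes \tau|_B$ and $E_B = \tau|_{W^\ast(\x)} \otimes \id_B$. I would set $\zeta_i := (\id \otimes \tau|_B)(\xi_i) \in L^2(W^\ast(\x),\tau)$ and use Theorem~\ref{thm:cumulants} to verify that $\zeta_i$ is the $i$-th $(\C,\delta_{ij}\tau)$-conjugate variable. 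The key computation is that, for elements of $W^\ast(\x)\otimes 1$, the $B$-valued moments factor as scalar moments tensored with $1_B$; by the moment-cumulant inversion, the same factorization holds for cumulants, so the cumulant conditions satisfied by $\xi_i$ translate directly to the scalar cumulant conditions characterizing $\zeta_i$ as a $(\C,\delta_{ij}\tau)$-conjugate variable.

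For the forward direction, let $(\zeta_i)_{i\in I}$ be a $(\C,\delta_{ij}\tau)$-conjugate system for $\x$. By Corollary~\ref{cor: scalar}, $\x\otimes 1$ admits a $(\C\otimes B,(\tau\otimes 1_B)_{ij})$-conjugate system in $(M\bar\otimes B,\tau\otimes \tau|_B)$; tracing through the identification $\C\otimes B\cong B$, this is a $(B,\delta_{ij}E_B)$-conjugate system with explicit form $\zeta_i\otimes 1$. Now apply Theorem~\ref{thm a} inside the subalgebra $W^\ast(\x)\bar\otimes B$ of $M\bar\otimes B$, which is generated by $\x\otimes 1$ and $\C\otimes B$, together with $|I|>1$: this forces $W^\ast(\x)$ to be a factor and imposes strong constraints on how $B$ can sit inside $M\bar\otimes B$. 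To descend to $M$, I would exploit that the conjugate variable $\zeta_i\otimes 1$ lives entirely in $M\otimes 1$; combined with the scalar cumulant conditions on $\zeta_i$ and the generating assumption $M = (B\cup \x)''$, this should force the mixed moments $\tau(b_0 x_{i_1} b_1 \cdots x_{i_d} b_d)$ to factor multiplicatively into a $B$-piece and an $\x$-piece, yielding both the tensor independence of $\C\<\x\>$ and $B$ in $M$ and the fact that $\zeta_i$ itself serves as the $(B,\delta_{ij}E_B)$-conjugate variable (which then follows formally from the reverse direction).

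The main obstacle is the rigidity step in the forward direction: ruling out configurations where $\C\<\x\>$ and $B$ interact non-trivially inside $M$ (for instance, free rather than tensor independent) from only the scalar conjugate assumption. My strategy for this step would be to mimic the non-intertwining argument used in the proof of Theorem~\ref{thm a}: the closability of the scalar-valued partial derivative and Popa's intertwining theorem (Theorem~\ref{pre:popathm}) should obstruct the existence of $B$-$\C\<\x\>$-central vectors in the appropriate correspondence, thereby forcing the commutation $[x_i,b]=0$ and the multiplicativity of the trace on $\C\<\x\>\cdot B$ that together constitute tensor independence.
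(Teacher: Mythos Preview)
Your reverse direction is fine and essentially matches the paper: under the tensor identification $M\cong W^\ast(\x)\bar\otimes B$ one has $E_B=\tau|_{W^\ast(\x)}\otimes 1_B$, so Corollary~\ref{cor: scalar} applies directly; your more explicit argument via $\zeta_i=(\id\otimes\tau|_B)(\xi_i)$ and Theorem~\ref{thm:cumulants} is just an unwinding of that corollary's proof.

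Your forward direction, however, has a real gap at exactly the place you flag as the ``main obstacle,'' and the detour through $M\bar\otimes B$ is unnecessary. The paper's argument is much more direct: since $\x$ admits a $(\C,\delta_{ij}\tau)$-conjugate system and $|I|>1$, Dabrowski's theorem (equivalently Theorem~\ref{thm a} with $B=\C$) gives that $A:=W^\ast(\x)$ is a \emph{factor}. A $\mathrm{II}_1$ factor has a unique normalized trace, so for each positive $b\in B$ the functional $\tau_b(a):=\tau(ab)$ on $A$ must be a scalar multiple of $\tau|_A$; evaluating at $a=1$ gives the scalar $\tau(b)$, hence $\tau(ab)=\tau(a)\tau(b)$ for all $a\in A$, $b\in B$. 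This multiplicativity is precisely the independence statement, and then the map $a\otimes b\mapsto ab$ yields $A\bar\otimes B\cong A\vee B=M$. Under this identification $E_B=\tau\otimes 1_B$, and Corollary~\ref{cor: scalar} supplies the $(B,\delta_{ij}E_B)$-conjugate system.

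Your proposed mechanism for the rigidity step---using non-intertwining and Proposition~\ref{prop:central vectors} to force $[x_i,b]=0$---does not work as stated. Those tools rule out \emph{central vectors in $L^2(M\boxtimes_\eta M,\tau)$}, not commutators in $M$; there is no direct way to conclude $[x_i,b]=0$ from the absence of such central vectors. The crucial missing idea is the unique-trace argument: factoriality of $W^\ast(\x)$ is the right intermediate step, and it follows immediately from the scalar hypothesis without ever passing to $M\bar\otimes B$.
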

\begin{proof}
    Assume $\x$ admits a scalar conjugate system and let $A=W^\ast(\x)$ be a von Neumann algebra generated by $x_i$'s. For any positive element $b\in B$, define a trace $\tau_b$ on $A$ by $\tau_b(a)=\tau(ab)$. However, by \cite[Theorem 1]{MR2606868}, $A$ is a factor and hence it has a unique trace. Thus, there exists $c\in \C$ such that $\tau_b(a)=c\tau(a)$ for all $a\in A$. Since $c=c\tau(1)=c\tau_b(1)=\tau(b)$, we have 
    $\tau(ab)=\tau_b(a)=\tau(b)\tau(a)$. This implies $A$ and $B$ are independent with respect to $\tau$. Next we claim $A\overline{\otimes}B \cong A\vee B$. Let $\pi$ be a mapping from the algebraic tensor product of $A$ and $B$, $A\odot B$, into $M=A\vee B$ that sends $a\otimes b$ to $ab$. Then $\pi$ is well defined since 
    assuming $\sum_ja_j\otimes b_j=\sum_k c_k\otimes d_k$ one has
    \begin{align*}
        \< \sum_j a_jb_j - \sum_k c_kd_k, xy\>_\tau &= \sum_j\tau(x a_j^\ast b_j^\ast y)-\sum_k \tau(x c_k^\ast d_k^\ast y) \\
        &= \sum_j\tau(x a_j^\ast)\tau(b_j^\ast y) -\sum_k\tau(x c_k^\ast)\tau(d^\ast_k y)\\
        &=\sum_j\<a_i\otimes b_j,x\otimes y\>_{\tau\otimes \tau} - \sum_k\<c_k\otimes d_k, x\otimes y\>_{\tau\otimes \tau}=0
    \end{align*}
    for $x\in A$ and $y\in B$. Then $\pi$ is a $\ast$-homomorphism and extends to a normal isomorphism since $\tau\circ \pi= \tau\otimes \tau$ by independence. Thus $A\overline{\otimes}B \cong A\vee B$. Then Corollary~\ref{cor: scalar} gives us that $\x=\x\otimes 1$ admits a $(\C\otimes B, (\delta_{ij}\tau \otimes 1)_{ij})$-conjugate system. Note that identifying $A\otimes B$ and $A\vee B$ carries the covariance matrix $\tau\otimes 1$ to $E_B$. Therefore, $\x$ admits a $(B, (\delta_{ij}E_B)_{ij})$-system.
    The opposite direction follows from Corollary~\ref{cor: scalar} after noting that $E_B=\tau\otimes 1 $ under the identification $M\cong A\overline{\otimes} B$.
\end{proof}
\section{Absence of atoms in the presence of a conjugate system}\label{section: no atoms}
Recall that for a self-adjoint element $x$, a self-adjoint $b\in B$ is called an \emph{atom} of $x$ in $B$ if $\ker(x-b)\neq 0$.
In this section we argue that certain self-adjoint polynomials in $\bx$ do not have atoms in $B$ when $\mathbf{x}$ admits a $(B,\eta)$-conjugate system. This part is crucial to show $\bx$ is diffuse relative to $B$ in Section~\ref{section: relative diffuse}. 
We implement Mai, Speicher and Weber's methodology from \cite{MR3558227}, which showed the absence of zero divisors under the assumption of finite free Fisher information in scalar case. We view $\partial_\eta$ as a densely defined operator on $L^2(M,\tau)$ with codomain $L^2(M\boxtimes_\eta M,\tau)$.
Recall that we have a conjugate linear isometry $J$ on $L^2(M\boxtimes_\eta M,\tau)$ defined by $J(ae_ib)=b^\ast e_i a^\ast$ and which satisfies $J(\pe(p))=\pe(p^\ast)$.

\begin{prop} \label{prop:adjoint} 
Let $(M,\tau)$ be a tracial von Neumann algebra generated by a von Neumann algebra $B$ and a tuple of self-adjoint operators $\x=(x_i)_{i\in I}$. Assume that a $(B,\eta)$-conjugate system exists for $\x$. Then for $p,q \in \bx$ and  $\xi \in \operatorname{span}\{\bx e_i \bx :i\in I\}$ one has $p\xi q\in \dom(\pea)$ with
    \[\pea(p\xi q)=p \pea(\xi)q-\< J\pe(p)\mid \xi\>_M q -p\< J(\xi) \mid\pe(q)\>_M.\]
Consequently, $\partial_\eta$ is closable.
\end{prop}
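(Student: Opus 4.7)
The plan is to prove the formula by combining the integration-by-parts formula for the conjugate system with the Leibniz rule for $\pe$. By linearity it suffices to treat $\xi=ae_ib$ with $a,b\in\bx$ and $i\in I$, and I would first handle the base case $p=a$, $q=b$, $\xi=e_i$ (where $\pea(e_i)=\xi_i$ is given by hypothesis), then bootstrap to the general case by applying the base case to $(p\alpha)e_i(\beta q)$ whenever $\xi=\alpha e_i\beta$.

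For the base case, I would test against an arbitrary $\zeta\in\bx$ and use the bimodule structure of the correspondence to write $\langle ae_ib,\pe(\zeta)\rangle=\langle e_i,a^*\pe(\zeta)b^*\rangle$. The Leibniz rule then gives $a^*\pe(\zeta)b^*=\pe(a^*\zeta b^*)-\pe(a^*)\zeta b^*-a^*\zeta\pe(b^*)$. The first piece yields $\langle\xi_i,a^*\zeta b^*\rangle_\tau=\langle a\xi_i b,\zeta\rangle_\tau$ after invoking the integration-by-parts formula from Definition~\ref{defi: conj var} and cyclicity of $\tau$. The remaining two pieces, after cycling scalars under $\tau$, can be rewritten as $L^2$-pairings with $\zeta$ via the $M$-valued inner product on $M\boxtimes_\eta M$; the key algebraic identities here are $\pe(p^*)=J\pe(p)$ and the componentwise adjoint identity $\eta_{ij}(b^*)=\eta_{ji}(b)^*$ which follows from $\eta$ being $*$-preserving. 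This should produce the base-case formula $\pea(ae_ib)=a\xi_i b-\langle J\pe(a)\mid e_i\rangle_M b-a\langle e_i\mid\pe(b)\rangle_M$ and, in particular, show $\bx e_i\bx\subset\dom(\pea)$.

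For the general formula, given $\xi=\alpha e_i\beta$ I would apply the base case to $(p\alpha)e_i(\beta q)$ and expand $\pe(p\alpha)=\pe(p)\alpha+p\pe(\alpha)$ and $\pe(\beta q)=\pe(\beta)q+\beta\pe(q)$. Using the module identities $\langle m\xi\mid\zeta\rangle_M=\langle\xi\mid m^*\zeta\rangle_M$ and $\langle\xi a\mid\zeta\rangle_M=a^*\langle\xi\mid\zeta\rangle_M$, the four resulting inner-product terms should reassemble into $p\pea(\xi)q-\langle J\pe(p)\mid\xi\rangle_M q-p\langle J\xi\mid\pe(q)\rangle_M$. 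Closability of $\pe$ is then immediate: $\operatorname{span}\{\bx e_i\bx\}\subset\dom(\pea)$ is dense in $L^2(M\boxtimes_\eta M,\tau)$ by the definition of the correspondence, so $\pea$ is densely defined. The main obstacle I anticipate is purely bookkeeping in the base-case computation: tracking conventions for $J$-conjugation, distinguishing left and right module actions, and correctly invoking $\tau$-symmetry of $\eta$ to move covariance components past other operators under the trace while respecting the index swap in $\eta_{ij}$.
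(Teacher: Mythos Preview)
Your proposal is correct and follows essentially the same route as the paper. The paper carries out your base-case computation once, but phrased for a general $\xi\in\dom(\pea)$: it writes $\langle p\xi q,\pe(r)\rangle=\langle\xi,\pe(p^*rq^*)-\pe(p^*)rq^*-p^*r\pe(q^*)\rangle$ and identifies the three terms exactly as you describe, which immediately gives the formula whenever $\xi\in\dom(\pea)$; since $e_i\in\dom(\pea)$ by hypothesis, this yields $\bx e_i\bx\subset\dom(\pea)$ and the general statement by linearity, so your explicit bootstrap via expanding $\pe(p\alpha)$ and $\pe(\beta q)$ is unnecessary but not wrong.
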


\begin{proof}
For $p,q\in \bx$ and $\xi \in \operatorname{span}\{\bx e_i \bx :i\in I\}$ and any $r\in \bx$, 
\begin{align*}
    \<\pea(p\xi q),r\>_\tau =\< p\xi q,\pe(r)\> &=   \<  \xi , p^\ast\pe(r)q^\ast\>\\
    &=  \< \xi ,\pe(p^\ast r q^\ast)-\pe(p^\ast)rq^\ast -p^\ast r \pe(q^\ast)\>.\\
\end{align*}
The first term is $\< \xi , \pe(p^\ast rq^\ast)\> = \< \pea(\xi ),p^\ast r q^\ast\>_\tau = \< p\pea(\xi )q,r\>_\tau$. The second term is 
\begin{align*}
    \< \xi , \pe(p^\ast)rq^\ast\> &= \tau(\< \xi  \mid \pe(p^\ast)rq^\ast\>_M) \\ &= \tau( \< \xi  \mid \pe(p^\ast)\>_Mrq^\ast) \\& = \< \< \pe(p^\ast)\mid \xi \>_M , rq^\ast\>_\tau \\&= \< \< \pe(p^\ast) \mid \xi \>_Mq , r\>_\tau\\
    &=\<\<J(\pe(p))\mid \xi\>_Mq,r\>_\tau
\end{align*}
The third term is 
\begin{align*}
    \< \xi , p^\ast r\pe(q^\ast)\> &= \<r^\ast p \xi ,\pe(q^\ast)\>\\
    &=\< J(\pe(q^\ast)),J(r^\ast p \xi )\>\\
    &=\< \pe(q),J(\xi) p^\ast r\>\\
    &=\< \<J(\xi)\mid \pe(q)\>_M,p^\ast r\>_\tau\\
    &= \< p\< J(\xi) \mid\pe(q)\>_M, r\>_\tau.
\end{align*}
Since $r\in \bx$ was arbitrary, this establishes the claimed equality. Since $\pea$ is densely defined, $\pe$ is closable. 
\end{proof}

Even though $\pe$ is an unbounded operator on $L^2(M,\tau)$ to $L^2(M\boxtimes_\eta M,\tau)$, we will see that $\| \pea(pe_i)\| _\tau$ can be controlled in by the operator norm of $p$ (see Proposition~\ref{prop:adjoint ineq} below). To see this, we first need to see the following lemma. 

\begin{lem}\label{prop:adjoint2}
Let $(M,\tau)$ be a tracial von Neumann algebra generated by a von Neumann algebra $B$ and a tuple of self-adjoint operators $\x=(x_i)_{i\in I}$. Assume that a $(B,\eta)$-conjugate system exists for $\x$. Then for $p,q\in \bx$, 
    \[\<  \pea(p e_i), \pea(q e_j)\>_\tau = \<  \pea(e_i), \pea(p^\ast q e_j)\>_\tau \]
and 
    \[\< \pea(e_ip), \pea(e_jq) \>_\tau =\<\pea(e_i),\pea(e_jqp^\ast)\>_\tau. \]
\end{lem}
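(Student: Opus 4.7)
The plan is to reduce both identities to a purely algebraic statement about $\pe$ on $\bx$, using Proposition~\ref{prop:adjoint}. Applying that proposition with $\xi=e_i$ and trivial left/right factors gives
\[
\pea(pe_i)=p\xi_i-\alpha_{p,i},\qquad \alpha_{p,i}:=\<J\pe(p)\mid e_i\>_M,
\]
and analogously for $qe_j$ and $p^*qe_j$. Writing $\pe(p)=\sum_k a_k e_{\ell_k}b_k$ explicitly, one checks $\alpha_{p,i}=\sum_k a_k\eta_{\ell_k i}(b_k)\in \bx$, since each $\eta_{\ell_k i}(b_k)\in B$.

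For the first identity, I would expand
\[
\<\pea(pe_i),\pea(qe_j)\>_\tau=\<p\xi_i-\alpha_{p,i},\,q\xi_j-\alpha_{q,j}\>_\tau,
\]
using traciality to rewrite terms such as $\<p\xi_i,q\xi_j\>_\tau=\<\xi_i,p^*q\xi_j\>_\tau$ and $\<p\xi_i,\alpha_{q,j}\>_\tau=\<\xi_i,p^*\alpha_{q,j}\>_\tau$. Meanwhile, Leibniz applied to $\pe(p^*q)=\pe(p^*)q+p^*\pe(q)$, combined with $J\pe(\cdot)=\pe((\cdot)^*)$ and the adjointness of the left $M$-action on $M\boxtimes_\eta M$, yields
\[
\alpha_{p^*q,j}=\<\pe(p)\mid qe_j\>_M+p^*\alpha_{q,j}.
\]
After cancellation of the matching terms on both sides, the first identity reduces to
\[
\<\alpha_{p,i},\pea(qe_j)\>_\tau=\<\xi_i,\<\pe(p)\mid qe_j\>_M\>_\tau.
\]
Since both $\alpha_{p,i}$ and $\<\pe(p)\mid qe_j\>_M$ lie in $\bx$, applying the adjoint relation defining $\pea$ to each side converts this into the purely differential identity
\[
\<\pe(\alpha_{p,i}),qe_j\>=\<e_i,\pe(\<\pe(p)\mid qe_j\>_M)\>.
\]
I would verify this by taking $p$ to be a monomial $b_0 x_{n_1}b_1\cdots x_{n_d}b_d$, expanding via Leibniz (noting $\pe|_B=0$ kills each $\eta_{\cdot\cdot}(\cdot)$-factor), and matching the two resulting double sums indexed by pairs $(l,m)$ of positions in $p$ with $l<m$, using the compatibility $\eta_{ij}(b)^*=\eta_{ji}(b^*)$.

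For the second identity, I would exploit the $J$-symmetry. The relation $J\pe=\pe J_\tau$ on $\bx$ extends, via a short adjoint calculation, to $\pea J=J_\tau\pea$ on $\operatorname{span}\{\bx e_i\bx\}$. Combined with $J(pe_i)=e_ip^*$, $J(p^*qe_j)=e_jq^*p$, and $J_\tau\xi_i=\xi_i$, the first identity applied to $(p^*,q^*)$ becomes, after taking complex conjugates, precisely the second identity.

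The main obstacle is the combinatorial bookkeeping in the term-by-term matching of the purely differential identity: careful relabeling of the summation indices is required to see that the two double sums coincide, and one must keep track of how left/right factors are swapped by the $J$-isometry and by the adjoint property of the left action.
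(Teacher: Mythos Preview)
Your proposal is correct but takes a substantially different route from the paper. The paper argues by induction on the length of $p$: writing $p=bx_kp_0$, it applies Proposition~\ref{prop:adjoint} to $\pea(bx_k\cdot p_0e_i)$ and observes that the correction term $\<J\pe(bx_k)\mid p_0e_i\>_M=b\eta_{ki}(p_0)$ lies in $B$, hence is orthogonal to $\pea(qe_j)$ (since $\pe|_B=0$). The same observation on the other side gives $\<\pea(p_0e_i),\pea(x_kb^\ast qe_j)\>_\tau$, and iterating strips all of $p$. The second identity is then handled ``similarly''. This one-letter-at-a-time argument avoids any explicit double-sum bookkeeping; the whole proof is three lines.

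By contrast, you expand $\pea(pe_i)$, $\pea(qe_j)$, $\pea(p^\ast qe_j)$ fully via Proposition~\ref{prop:adjoint}, cancel the matching pieces, and are left with the second-order identity $\<\pe(\alpha_{p,i}),qe_j\>=\<e_i,\pe(\<\pe(p)\mid qe_j\>_M)\>$, which you then verify combinatorially on monomials. This is correct (the two double sums over pairs $l<m$ of positions in $p$ do match after the relabeling you describe), but it is more work than necessary: the paper's inductive step amounts to noticing that each individual correction term already vanishes when paired against $\pea(\cdot)$, so the cancellation happens one step at a time rather than all at once. Your use of the intertwining relation $\pea J=J_\tau\pea$ to deduce the second identity from the first is, however, cleaner and more conceptual than the paper's ``similar argument''.
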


\begin{proof}
Since $\pea$ is $B$-linear, it suffices to show that $\< \pea(pe_i),\pea(qe_j)\>_\tau=\<\pea(p_0e_i),\pea(x_kb^\ast qe_j)\>_\tau$ for $p=bx_kp_0$ with $b\in B$ and $p_0\in \bx$, using Proposition~\ref{prop:adjoint},
\begin{align*}
    \< \pea(bx_kp_0e_i), \pea(qe_j)\>_\tau&=\<bx_k\pea(p_0e_i),\pea(qe_j)\>_\tau -\< \<J\pe(bx_k)\mid p_0e_i\>_M, \pea(qe_j)\>_\tau\\
    &=\<\pea(p_0e_i), x_kb^\ast\pea(qe_j) \>_\tau -\<\<e_kb\mid p_0e_i\>_M, \pea(qe_j)\>_\tau
\end{align*}
Note that the second term $\<\<e_kb\mid p_0e_i\>_M, \pea(qe_j)\>_\tau$ is zero since $\<e_kb\mid p_0e_i\>_M=b\eta_{ki}(p_0)\in B$ and the inner product of any element of B against $\pea(qe_j)$ is zero. Similarly,  
    \[\<\pea(p_0e_i),\pea(x_kb^\ast qe_j)\>_\tau=\<\pea(p_0e_i), x_kb^\ast\pea(qe_j)-\<be_k\mid qe_j\>_M\>_\tau =\<\pea(p_0e_i), x_kb^\ast\pea(qe_j)\>_\tau\]
Thus $\< \pea(bx_kp_0e_i),\pea(qe_j)\>_\tau=\<\pea(p_0e_i),\pea(x_kb^\ast qe_j)\>_\tau$, and the similar argument yields $\< \pea(e_ip), \pea(e_jq) \>_\tau =\<\pea(e_i),\pea(e_jqp^\ast)\>_\tau$.

\end{proof}

Using the above lemma, we will now investigate $\|\pea(pe_i)\|_\tau$. The proof of Proposition~\ref{prop:adjoint ineq} is inspired by \cite[Theorem 8.10]{MR3585560}
\begin{prop}\label{prop:adjoint ineq} Let $(M,\tau)$ be a tracial von Neumann algebra generated by a von Neumann algebra $B$ and a tuple of self-adjoint operators $\x=(x_i)_{i\in I}$. Assume that a $(B,\eta)$-conjugate system exists for $\x$. For any $p\in \bx$ and $i\in I$ we have the following :
        \[\| \pea(pe_i)\|_\tau= \|  p\pea(e_i) -\< \pe(p^\ast)\mid e_i\>_M \| _\tau \leq \| \pea(e_i)\| _\tau \| p\| \] 
    and 
        \[\| \pea(e_ip)\| _\tau= \| \pea(e_i)p -\< e_i\mid \pe(p)\>_M \| _\tau \leq \| \pea(e_i)\| _\tau \| p\| .
        \]
    Therefore, 
        \[\| \< \pe(p^\ast)\mid e_i\>_M\| _\tau\leq 2\| \pea(e_i)\| _\tau\| p\|  \qquad \text{ and } \qquad \| \< e_i\mid \pe(p)\>_M\| _\tau \leq 2\| \pea(e_i)\| _\tau\| p\| .\] 
\end{prop}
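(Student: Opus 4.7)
The equality portion is immediate from Proposition~\ref{prop:adjoint} applied with $q=1$ and $\xi=e_i$ (using $\pe(1)=0$ and the identity $J\pe(p)=\pe(p^\ast)$), which gives $\pea(pe_i)=p\pea(e_i)-\<\pe(p^\ast)\mid e_i\>_M$. The symmetric formula for $\pea(e_ip)$ comes from the same proposition with $p=1$, $\xi=e_i$, noting $J(e_i)=e_i$.

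The main content is the bound $\|\pea(pe_i)\|_\tau\leq\|\pea(e_i)\|_\tau\|p\|$, and the plan is to iterate Lemma~\ref{prop:adjoint2}. Setting $p=q$ and $i=j$ in its first identity, together with Cauchy--Schwarz, gives
\[\|\pea(pe_i)\|_\tau^2=\<\pea(e_i),\pea(p^\ast pe_i)\>_\tau\leq\|\pea(e_i)\|_\tau\,\|\pea(p^\ast pe_i)\|_\tau.\]
Writing $q:=p^\ast p$ (self-adjoint, so $\|q^k\|=\|p\|^{2k}$), iterating this inequality $n$ times yields the bootstrap estimate
\[\|\pea(pe_i)\|_\tau^{2^n}\leq\|\pea(e_i)\|_\tau^{2^n-1}\,\|\pea(q^{2^{n-1}}e_i)\|_\tau.\]
To control the rightmost factor I will use the equality of the first step with $r:=q^{2^{n-1}}$, combined with the module Cauchy--Schwarz bound $\|\<\pe(r)\mid e_i\>_M\|_\tau\leq\|e_i\|\,\|\pe(r)\|_\tau$ and the Leibniz estimate $\|\pe(q^k)\|_\tau\leq 2k\|p\|^{2k-1}\|\pe(p)\|_\tau$. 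Together these yield
\[\|\pea(q^{2^{n-1}}e_i)\|_\tau\leq\|p\|^{2^n}\Bigl(\|\pea(e_i)\|_\tau+2^n\|e_i\|\|\pe(p)\|_\tau/\|p\|\Bigr).\]
Taking $2^n$-th roots and sending $n\to\infty$, using $\|\pea(e_i)\|_\tau^{1-1/2^n}\to\|\pea(e_i)\|_\tau$ and $(C_1+C_2\cdot 2^n)^{1/2^n}\to 1$, delivers $\|\pea(pe_i)\|_\tau\leq\|\pea(e_i)\|_\tau\|p\|$. The symmetric inequality $\|\pea(e_ip)\|_\tau\leq\|\pea(e_i)\|_\tau\|p\|$ proceeds identically, starting instead from the second identity in Lemma~\ref{prop:adjoint2} (with $q':=pp^\ast$). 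The final two estimates then follow from the equality of the first step via the triangle inequality: for instance, $\|\<\pe(p^\ast)\mid e_i\>_M\|_\tau\leq\|\pea(pe_i)\|_\tau+\|p\pea(e_i)\|_\tau\leq 2\|\pea(e_i)\|_\tau\|p\|$.

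The main obstacle is ensuring that the auxiliary error $\|\<\pe(q^{2^{n-1}})\mid e_i\>_M\|_\tau$ grows at most like $\|p\|^{2^n}$ times a polynomial in $n$, so that its $2^n$-th root contributes nothing beyond $\|p\|$ in the limit. The Leibniz rule gives exactly such polynomial growth, while the dominant factor $\|p\|^{2^n}=\|q^{2^{n-1}}\|$ is precisely what produces $\|p\|$ after extracting the $2^n$-th root---this is the operator-valued incarnation of the spectral radius argument used in the scalar case.
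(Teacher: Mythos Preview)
Your proof is correct and follows essentially the same strategy as the paper: both iterate Lemma~\ref{prop:adjoint2} with Cauchy--Schwarz to obtain a bootstrap estimate, then control the residual term $\|\pea((p^\ast p)^{2^{n-1}}e_i)\|_\tau$ via the explicit formula from Proposition~\ref{prop:adjoint} together with a Leibniz bound on $\pe$, and extract the limit by taking $2^n$-th roots. The only cosmetic difference is that you invoke the module Cauchy--Schwarz inequality (applied with the slots swapped, using $\<x\mid y\>_M^\ast=\<y\mid x\>_M$ and $\|a\|_\tau=\|a^\ast\|_\tau$) to bound $\|\<\pe(r)\mid e_i\>_M\|_\tau\leq\|\eta_{ii}(1)\|^{1/2}\|\pe(r)\|_2$, whereas the paper reaches the same bound by a short direct computation; these are equivalent.
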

\begin{proof}
For $p\in \bx$, by Proposition~\ref{prop:adjoint} and the Lemma~\ref{prop:adjoint2} we have
    \begin{align*}
            \|  p\pea(e_i) -\< \pe(p^\ast)\mid e_i\>_M \| _\tau^2 &= \< \pea(pe_i) ,\pea(pe_i) \>_\tau = \< \pea(e_i), \pea(p^\ast p e_i)\>_\tau \leq \|  \pea(e_i)\| _\tau  \| \pea( p^\ast pe_i) \| _\tau.
    \end{align*}
    By iteration on $\| \pea( p^\ast pe_i) \| _\tau$,
    \begin{align}\label{ineq: prop 3.3}
            \|  p\pea(e_i) -\< \pe(p^\ast)\mid e_i\>_M \| _\tau^2 &\leq \|  \pea(e_i)\| _\tau  \|  \pea(e_i)\| _\tau ^{\frac{1}{2}} \|   \|  \pea((p^\ast p)^2e_i) \| _\tau^{\frac{1}{2}}\nonumber\\
            &\vdots\nonumber\\
            &\leq \|  \pea(e_i)\| _\tau ^{1+\frac{1}{2}+\cdots +\frac{1}{2^n}} \| \pea((p^\ast p)^{2^n}e_i)\| _\tau^{\frac{1}{2^n}}.
    \end{align}
    As $n\to \infty$, $\|  \pea(e_i)\| _\tau ^{1+\frac{1}{2}+\cdots +\frac{1}{2^n}}$ converges to $\| \pea(e_i)\| _\tau^2$, and it remains to bound $\| \pea((p^\ast p)^{2^n}e_i)\| _\tau^{\frac{1}{2^n}}$. From   Proposition~\ref{prop:adjoint} we have
        \[
            \|\pea((p^\ast p)^{2^n}e_i)\|_\tau^{\frac{1}{2^n}} = \left\| (p^\ast p)^{2^n}\pea(e_i) -\< \pe((p^\ast p)^{2^n})\mid e_i\>_M \right\|_\tau^{\frac{1}{2^n}}.
        \]
    Toward bounding this, we claim that $\| \< \pe((p^\ast p)^{2^n})\mid e_i\>_M \|_\tau\leq \|\pe((p^\ast p)^{2^n})\|_\tau \|\eta_{ii}(1)\|$. Setting $a=\< \pe((p^\ast p)^{2^n})\mid e_i\>_M $ and using $0\leq \eta_{ii}(1) \leq \|\eta_{ii}(1)\|$ gives us
    \begin{align*}
            \| a\| _\tau^2 &=\|  \< \pe((p^\ast p)^{2^n})\mid e_i\>_M \| _\tau^2=\tau\left(a \< \pe((p^\ast p)^{2^n})\mid e_i\>_M \right)=  \< \pe((p^\ast p)^{2^n}), e_ia^\ast\>\\
            &\leq \| \pe((p^\ast p)^{2^n})\| _\tau\| e_ia^\ast\| _\tau = \| \pe((p^\ast p)^{2^n})\| _\tau \tau(a^\ast \eta_{ii}(1)a)^{\frac12}\leq\| \pe((p^\ast p)^{2^n})\| _\tau  \| a\| _\tau\| \eta_{ii}(1)\| .
    \end{align*}
    Also, note that for $q\in \bx$, $\| \pe(q^k)\| _2\leq k\| q\| ^{k-1}\| \pe(q)\| _2$ by iterating the Leibniz rule.
    Thus,  
    \begin{align*}
            \| \pea((p^\ast p)^{2^n}e_i)\| _\tau^{\frac{1}{2^n}} &= \|  (p^\ast p)^{2^n}\pea(e_i) -\< \pe((p^\ast p)^{2^n})\mid e_i\>_M \| _\tau^{\frac{1}{2^n}} \\
            &\leq \left( \| p^\ast p\| ^{2^n}\| \pea(e_i)\| _\tau +\|  \< \pe((p^\ast p)^{2^n})\mid e_i\>_M \| _\tau  \right)^{\frac{1}{2^n}}\\
            &\leq \left( \| p^\ast p\| ^{2^n}\| \pea(e_i)\| _\tau +\|  \pe((p^\ast p)^{2^n})\| _2 \| \eta_{ii}(1)\|    \right)^{\frac{1}{2^n}}\\
            &\leq \left( \| p^\ast p\| ^{2^n} \| \pea(e_i)\| _\tau +2^n\|  p^\ast p\| ^{2^n-1}\| \pe(p^\ast p)\| _2  \| \eta_{ii}(1)\|  \right)^{\frac{1}{2^n}}\\
            &= \| p^\ast p\|  \left( \| \pea(e_i)\| _\tau +2^n \frac{\| \pe(p^\ast p)\| _2}{\| p^\ast p\| } \| \eta_{ii}(1)\| \right)^{\frac{1}{2^n}}\\
            &= \| p\| ^2 \left( \| \pea(e_i)\| _\tau +2^n \frac{\| \pe(p^\ast p)\| _2}{\| p^\ast p\| } \| \eta_{ii}(1)\| \right)^{\frac{1}{2^n}}.
    \end{align*}
    Sending $n\to \infty$, from the inequality (\ref{ineq: prop 3.3}) we have $\|  p\pea(e_i) -\< \pe(p^\ast)\mid e_i\>_M \| _\tau \leq \| \pea(e_i)\| _\tau \| p\| $.
    A similar argument yields $\| \pea(e_i)p -\< e_i\mid \pe(p)\>_M \| _\tau \leq \| \pea(e_i)\| _\tau \| p\| .$
\end{proof}


Recall from Proposition~\ref{prop:adjoint} that $\pe$ is closable when a $(B,\eta)$-conjugate system exists, and we will denote its closure by $\cpe$. From \cite{MR1171180}, $M\cap\dom(\cpe)$ is a $\ast$-algebra and $\cpe$ still satisfies Leibniz rule on $M\cap \dom(\cpe)$ (see also \cite[Section 1.7]{MR2550142}).

\begin{prop}\label{prop:zero}
    Let $(M,\tau)$ be a tracial von Neumann algebra generated by a von Neumann algebra $B$ and a tuple of self-adjoint operators $\x=(x_i)_{i\in I}$. Assume that a $(B,\eta)$-conjugate system exists for $\x$.
    For $P\in M\cap\dom(\cpe)$ and $u,v\in M_{s.a.}$, if $Pu=0$ and $P^\ast v=0$ then $$v\cpe(P)u=0.$$
\end{prop}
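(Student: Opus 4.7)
The plan is to show $\langle v\cpe(P)u,\, ae_ib\rangle=0$ for all $a,b\in\bx$ and $i\in I$; by density of $\operatorname{span}\{ae_ib\}$ in $L^2(M\boxtimes_\eta M,\tau)$ this yields $v\cpe(P)u=0$. Using that $u,v$ are self-adjoint the inner product becomes $\langle\cpe(P),\, vae_ibu\rangle$, and since $\cpe=\pea^\ast$ it equals $\langle P,\,\pea(vae_ibu)\rangle$ provided $vae_ibu\in\dom(\pea)$.

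To verify this membership in $\dom(\pea)$, I choose via Kaplansky density self-adjoint $u_n,v_n\in\bx$ with $u_n\to u$ and $v_n\to v$ strongly and uniformly operator-norm bounded. Each $v_nae_ibu_n\in\dom(\pea)$ by Proposition~\ref{prop:adjoint}, and combining the formula there with the estimates of Proposition~\ref{prop:adjoint ineq} gives that $\|\pea(v_nae_ibu_n)\|_\tau$ is uniformly bounded. A weak-compactness argument together with the weak closedness of $\pea$ (which is closed as the adjoint of $\pe$) then yields $vae_ibu\in\dom(\pea)$ and $\pea(v_nae_ibu_n)\to\pea(vae_ibu)$ weakly in $L^2(M,\tau)$.

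It remains to show $\lim_n\langle P,\pea(v_nae_ibu_n)\rangle=0$. Using Proposition~\ref{prop:adjoint} to expand $\pea(v_nae_ibu_n)$ and then substituting the identities
\[
\langle\pe(a^\ast v_n)|e_i\rangle_M=v_na\pea(e_i)-\pea(v_nae_i),\qquad \langle e_i|\pe(bu_n)\rangle_M=\pea(e_i)bu_n-\pea(e_ibu_n)
\]
extracted from Proposition~\ref{prop:adjoint ineq}, the quantity rewrites as $\pea(v_nae_i)bu_n+v_na\pea(e_ibu_n)-v_na\pea(e_i)bu_n$. Pairing with $P$ via $\tau(P^\ast\cdot)$ and cyclically rearranging, each of the three resulting traces contains a factor of either $u_nP^\ast$ or $P^\ast v_n$. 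Since $Pu=0$ and $P^\ast v=(vP)^\ast=0$, both $u_nP^\ast\to 0$ and $P^\ast v_n\to 0$ in $L^2(M,\tau)$, while the remaining factors have uniformly bounded $\|\cdot\|_\tau$-norm by Proposition~\ref{prop:adjoint ineq}; Cauchy-Schwarz then finishes the proof.

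The hardest step is the second paragraph: since $u,v$ need not lie in $\dom(\cpe)$, the Leibniz rule cannot be applied directly to $vPu=0$ to conclude, and one is forced into a weak-limit/closedness argument for $\pea$. The key technical input that makes the final limit accessible is the rewriting in the third paragraph, which expresses $\pea(v_nae_ibu_n)$ entirely in terms of the uniformly $L^2$-bounded quantities $\pea(\cdot e_i)$, $\pea(e_i\cdot)$, and their combinations rather than the uncontrolled $\pe(v_n),\pe(u_n)$.
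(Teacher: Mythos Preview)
Your proof is correct and shares the paper's overall strategy: approximate $u,v$ by bounded self-adjoints $u_n,v_n\in\bx$ via Kaplansky, use the formula of Proposition~\ref{prop:adjoint} together with the operator-norm bounds of Proposition~\ref{prop:adjoint ineq}, and arrange that every surviving term carries a factor $Pu_n$ or $P^\ast v_n$ tending to zero in $\|\cdot\|_\tau$.

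The organization differs, however. The paper starts from $\langle Pu_k,\pea(v_kQ_1e_iQ_2)\rangle_\tau$, applies the Leibniz rule to $\cpe(Pu_k)=\cpe(P)u_k+P\pe(u_k)$, and must then control the dangerous term $\langle P\pe(u_k),v_kQ_1e_iQ_2\rangle$; this is done by a separate computation using the conjugate-linear isometry $J$ to obtain the estimate $|\langle\pe(y_1)y_2,y_3e_i\rangle|\leq 4\|\pea(e_i)\|_\tau\|y_3\|_\tau\|y_1\|\|y_2\|$. Your route instead substitutes the identities $\langle\pe(a^\ast v_n)\mid e_i\rangle_M=v_na\pea(e_i)-\pea(v_nae_i)$ and $\langle e_i\mid\pe(bu_n)\rangle_M=\pea(e_i)bu_n-\pea(e_ibu_n)$ directly into the Proposition~\ref{prop:adjoint} formula, so that $\pea(v_nae_ibu_n)$ is expressed purely through $\pea(v_nae_i)$, $\pea(e_ibu_n)$, $\pea(e_i)$, quantities already controlled by Proposition~\ref{prop:adjoint ineq}; the uncontrolled derivatives $\pe(u_n),\pe(v_n)$ never appear and the $J$-computation is avoided. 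The weak-compactness step you include (to put $vae_ibu$ into $\dom(\pea)$) is correct but in fact dispensable: since $v_nae_ibu_n\to vae_ibu$ in $L^2(M\boxtimes_\eta M,\tau)$, one has directly $\langle\cpe(P),vae_ibu\rangle=\lim_n\langle\cpe(P),v_nae_ibu_n\rangle=\lim_n\langle P,\pea(v_nae_ibu_n)\rangle$, after which your third paragraph finishes as written.
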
 
\begin{proof}
    By Kaplansky's density theorem, there exists self-adjoints $u_k,v_k\in \bx$ such that $$\sup_{k\in \mathbb{N}} \| u_k\| , \sup_{k\in \mathbb{N}}\| v_k\| <\infty \mbox{ and }  \lim_{k\to\infty}\| u_k-u\| _\tau
    =0, \lim_{k\to\infty}\| v_k-v\| _\tau=0. $$ For arbitrary $Q_1,Q_2 \in \bx$, and arbitrary $i\in I$,
    \begin{align}\label{eq:zero1}
        \< Pu_k, \pea(v_kQ_1e_iQ_2)\>_\tau = & \< \cpe(Pu_k),v_kQ_1e_iQ_2\> \nonumber \\
        =& \< \cpe(P)u_k +P\pe(u_k),v_jQ_1e_iQ_2 \>\nonumber\\
        =& \< \cpe(P)(u_k-u),v_kQ_1e_iQ_2 \> +\< \cpe(P)u, v_kQ_1e_iQ_2 \> +\< P\pe(u_k), v_kQ_1e_iQ_2\>\nonumber \\
       =&\< \cpe(P)(u_k-u),v_kQ_1e_iQ_2\> +\< \cpe(P)u,(v_k-v)Q_1e_iQ_2\>\nonumber \\ &+\< \cpe(P)u, vQ_1e_iQ_2\> +\< P\pe(u_k), v_kQ_1e_iQ_2\> \nonumber\\         
        =&\< \cpe(P)(u_k-u),v_kQ_1e_iQ_2\> +\< \cpe(P)u,(v_k-v)Q_1e_iQ_2\> \nonumber\\ &+\< v\cpe(P)u, Q_1e_iQ_2\> +\< \pe(u_k)Q_2^\ast,P^\ast v_kQ_1e_i\>
    \end{align}
    We are going to show the third term $\< v\cpe(P)u, Q_1e_iQ_2\>$ is zero by showing the remaining three terms and the original expression all converge to 0 as $k\to \infty$.
    To estimate the term on the left hand side, note that from Proposition~\ref{prop:adjoint}, for $y_1,y_2\in \bx$
        \[\pea(y_1e_iy_2)=\pea(y_1e_i)y_2-y_1\< e_i  \mid \pe(y_2)\>_M\] 
    and by Proposition~\ref{prop:adjoint ineq}, 
        \[\| \pea(y_1e_iy_2)\| _\tau \leq 3\| \pea(e_i)\| _\tau\cdot\| y_1\| \cdot \| y_2\| .\]
    Hence 
    \begin{align*}
    |\< Pu_k,\pea(v_kQ_1e_iQ_2)\>_\tau | &\leq \| Pu_k\| _2 \cdot \| \pea(v_kQ_1e_iQ_2)\| _2 \\ 
    &\leq \| Pu_k-Pu\| _\tau\cdot 3\| \pea(e_i)\| _\tau\cdot \| v_k\| \cdot \| Q_1\| \cdot \| Q_2\| \\ 
    &\leq \| P\| \cdot \| u_k-u\| _\tau \cdot 3\| \pea(e_i)\| _\tau\cdot \| v_k\| \cdot \| Q_1\| \cdot \| Q_2\| ,
    \end{align*}
    which tends to zero as $k\to \infty$. For the term $\< \pe(u_k)Q_2^\ast,P^\ast v_kQ_1e_i\>$ on the right hand side, consider for $y_1,y_2\in\bx$ and $y_3\in M\cap\dom(\cpe)$,
       \[ \< \pe(y_1)y_2, y_3e_i\> = \< \pe(y_1y_2),y_3e_i\> - \< y_1 \pe(y_2), y_3e_i\>. \]
    Using the conjugate linear isometry $J(ae_ib)=b^\ast e_i a^\ast$,
    
    \begin{multicols}{2}
    \noindent
    \begin{equation*}
    \begin{split}
        \< \pe(y_1y_2),y_3e_i\> &= \< J(y_3e_i), J(\pe(y_1y_2))\>\\
        &= \< e_iy_3^\ast ,\pe(y_2^\ast y_1^\ast)\>\\
        &=\tau(y_3\<e_i\mid \pe(y_2^\ast y_1^\ast)\>\\
        &=\< \< \pe(y_2^\ast y_1^\ast)|e_i\>_M, y_3 \>_\tau    
    \end{split}
    \end{equation*}
    \columnbreak
    \begin{equation*}
    \begin{split}
        \< y_1 \pe(y_2), y_3e_i\> &= \< \pe(y_2),  y_1^\ast y_3 e_i \>\\
        &=\< J(y_1^\ast y_3 e_i), J(\pe(y_2))\>\\
        &=\<e_iy_3^\ast y_1, \pe(y_2^\ast)\>\\
        &=\< \< \pe(y_2^\ast)\mid e_i\>_M, y_1^\ast y_3\>_\tau
    \end{split}
    \end{equation*}
    \end{multicols}
    
    \noindent Thus we have 
    \[\< \pe(y_1)y_2, y_3e_i\> = \< \< \pe(y_2^\ast y_1^\ast)|e_i\>_M, y_3 \>_\tau - \< \< \pe(y_2^\ast)|e_i\>_M, y_1^\ast y_3\>_\tau\] 
    and by Proposition~\ref{prop:adjoint ineq},
       \begin{align*}
                |\< \pe(y_1)y_2, y_3e_i\>| &\leq \| \< \pe(y_2^\ast y_1^\ast)\mid e_i\>_M\| _\tau \cdot \| y_3\| _\tau + \| \< \pe(y_2^\ast)\mid e_i\>_M\| _\tau\cdot\| y_1^\ast y_3\| _\tau\\
                &\leq 2\| \pea(e_i)\| _\tau\cdot \| y_2^\ast y_1^\ast\| \cdot \| y_3\| _\tau + 2\| \pea(e_i)\| _\tau\cdot\| y_2^\ast\| \cdot \| y_1^\ast y_3\| _\tau\\
                &\leq  4\| \pea(e_i)\| _\tau\cdot \| y_3\| _\tau\cdot \| y_1\| \cdot \| y_2\| .
        \end{align*}
    Using this inequality,  we can bound the term $\< \pe(u_k)Q_2^\ast,P^\ast v_kQ_1e_i\>$ as:
       \begin{align*}
                |\< \pe(u_k)Q_2^\ast,P^\ast v_kQ_1e_i\>| &\leq 4\| \pea(e_i)\| _\tau\cdot \| P^\ast v_k Q_1\| _\tau\cdot\| u_k\| \cdot \| Q_2\| \\
                &=4\| \pea(e_i)\| _\tau\cdot \| P^\ast (v_k-v) Q_1\| _\tau\cdot\| u_k\| \cdot \| Q_2\| \\
                &\leq 4\| \pea(e_i)\| _\tau\cdot \| P^\ast\| \cdot  \| v_k -v\| _\tau \cdot \| Q_1\| \cdot\| u_k\| \cdot \| Q_2\| 
        \end{align*}
    which converges to $0$ as $k\to\infty$.
    The remaining terms $\< \cpe(P)u,(v_k-v)Q_1e_iQ_2\>$ and $\< \cpe(P)(u_k-u),v_kQ_1e_iQ_2\>$ also converge to 0 by the Cauchy-Schwarz inequality and normality of the bimodule actions. Summing up, every term converges to 0 as $k\to \infty$, so we conclude that the third term in the last expression of equation (\ref{eq:zero1}), $\< v\cpe(P)u, Q_1e_iQ_2\>$, is zero. Since $Q_1$ and $Q_2$ were arbitrary, we have $v\cpe(P)u=0$.
\end{proof}


\begin{thm}[Theorem~\ref{thm b}]\label{thm:no atom}
Let $(M,\tau)$ be a tracial von Neumann algebra generated by a von Neumann algebra $B$ and a tuple of self-adjoint operators $\x=(x_i)_{i\in I}$. Assume that a $(B,\eta)$-conjugate system exists for $\x$ and a covariance matrix $\eta$ with $\eta_{ii}=E_B$ for all $i\in I$. Then a self-adjoint $B$-linear combination $P=\sum_{j}a_jx_ib_j +b_0$ has no atoms in $B$ when $a_j,b_j\in B$ and there exists a positive scalar $c$ such that $\sum_{j}a_jb_j\geq c\cdot 1$.

If we furthermore assume $\eta=(\delta_{ij}E_B)_{i,j\in I}$, then a self-adjoint $B$-linear combination $P=\sum_{i}\sum_{j}a_j^{(i)}x_ib_j^{(i)} +b_0$ has no atoms in $B$ when $a_j^{(i)},b_j^{(i)}\in B$ and there exists an $1\leq i\leq n$ and a positive scalar $c$ such that $\sum_{j}a_j^{(i)}b_j^{(i)}\geq c\cdot 1$.
\end{thm}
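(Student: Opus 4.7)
The plan is to suppose, for contradiction, that $P$ has an atom at some self-adjoint $\beta \in B$ and let $p \in M$ be the nonzero spectral projection $1_{\{0\}}(P - \beta)$, so $(P - \beta) p = 0$, and by self-adjointness of $P - \beta$ also $p(P - \beta) = 0$. Since $P - \beta \in \bx \subset \dom(\cpe)$, I would apply Proposition~\ref{prop:zero} with $u = v = p$ to conclude $p\, \cpe(P - \beta)\, p = 0$ in $L^2(M \boxtimes_\eta M, \tau)$. The Leibniz rule together with $\cpe$ vanishing on $B$ then gives $\cpe(P - \beta) = \sum_j a_j e_i b_j$, so that
\[
\sum_j p a_j e_i b_j p = 0.
\]

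To extract the contradiction, I would expand the squared norm using $\eta_{ii} = E_B$ and absorb $b_j^*, b_k \in B$ through $E_B$ by bimodularity:
\[
0 = \sum_{j,k} \tau\bigl(p b_j^* E_B(a_j^* p a_k) b_k p\bigr) = \tau\bigl(p\, E_B(A^* p A)\, p\bigr), \qquad A := \sum_j a_j b_j.
\]
The hypothesis $A \geq c\cdot 1$ forces $A$ to be self-adjoint and invertible, and crucially $A \in B$, so the key factorization $E_B(A p A) = A E_B(p) A$ applies. Combining $p^2 = p$, cyclicity of $\tau$, the identity $\tau = \tau \circ E_B$, and $B$-bimodularity of $E_B$, this rearranges into
\[
0 = \tau\bigl(A E_B(p) A p\bigr) = \tau\bigl(E_B(p) A E_B(p) A\bigr) = \tau\bigl((A^{1/2} E_B(p) A^{1/2})^2\bigr).
\]
Faithfulness of $\tau$ forces the positive element $A^{1/2} E_B(p) A^{1/2}$ to vanish, invertibility of $A$ then gives $E_B(p) = 0$, and finally $\tau(p) = \tau(E_B(p)) = 0$ yields $p = 0$ by faithfulness of $\tau$ --- a contradiction.

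For the second assertion, the assumption $\eta_{ij} = \delta_{ij} E_B$ makes the subspaces indexed by distinct $e_i$'s mutually orthogonal in $L^2(M \boxtimes_\eta M, \tau)$, since $\langle a e_i b, c e_j d\rangle = \delta_{ij}\tau(b^* E_B(a^* c) d)$. Thus $p\, \cpe(P - \beta)\, p = 0$ decouples into $\sum_j p a_j^{(i)} e_i b_j^{(i)} p = 0$ for every $i \in I$ individually, and applying the single-variable argument at the specific index with $\sum_j a_j^{(i)} b_j^{(i)} \geq c \cdot 1$ finishes the proof. The main obstacle is already taken care of by Proposition~\ref{prop:zero}; the remaining computational point is the identity $E_B(A^* p A) = A E_B(p) A$, which genuinely requires $A \in B$ and converts the vanishing of $p\, \cpe(P - \beta)\, p$ into a positivity relation that invertibility of $A$ leverages to kill $E_B(p)$.
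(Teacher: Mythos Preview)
Your proof is correct and follows the same strategy as the paper: apply Proposition~\ref{prop:zero} to obtain $p\,\pe(P-\beta)\,p=0$, then extract a positivity relation forcing $E_B(p)=0$ and hence $p=0$. The only tactical difference is that the paper pairs $p\,\pe(P)\,p$ directly against $e_i$ to get $0=\tau\bigl(E_B(p)\,(\textstyle\sum_j a_jb_j)\,E_B(p)\bigr)\geq c\|E_B(p)\|_\tau^2$ in one line, whereas you compute the full $L^2$-norm of $\sum_j pa_je_ib_jp$ and then manipulate $\tau(pE_B(A^\ast pA)p)$; both routes are short, and for the second assertion the paper's pairing with $e_i$ and your orthogonal decomposition amount to the same thing.
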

\begin{proof}
Assume $\eta_{ii}=E_B$ and that the $i$-th $(B,\eta)$-conjugate variable exists. For a self-adjoint $B$-linear combination $P=\sum_{j}a_jx_ib_j +b_0$, let $b$ be a self-adjoint element of $B$, and $w$ be the projection onto $\ker(P-b)$ so that $(P-b)w=0$ and  $(P-b)^*w=(P-b)w=0$. Then $w \pe(P-b) w=0$ by Proposition~\ref{prop:zero} and
    \begin{align*}
            0=\< e_i , w\pe(P)w\> =\<e_i, \sum_j wa_je_ib_j w\>&=\tau\left(E_B(w)\sum_j a_jb_j w \right)\\&=\tau\left( E_B(w)\sum_j a_jb_j E_B(w)\right) \geq c \|E_B(w)\|_2^2.
    \end{align*}
Thus we have $E_B(w)=0$ and therefore $w=0$ by the faithfulness of $E_B$.

Next, assume $\eta_{ij}=\delta_{ij}E_B$ and that a $(B,\eta)$-conjugate system exists. For a self-adjoint $B$-linear combination $P=\sum_{i}\sum_{j}a_j^{(i)}x_ib_j^{(i)} +b_0$, let $b$ be a self-adjoint element of $B$ and $w$ be as before so that $w \pe(P-b) w=0$ by Proposition~\ref{prop:zero}. Then
    \begin{align*}
            0=\< e_i , w\pe(P)w\>=\<e_i, w\sum_i\sum_j a_j^{(i)}e_ib_j^{(i)}w\> &=\tau\left(E_B(w)\sum_j a_j^{(i)}b_j^{(i)} w \right)\\
            &=\tau\left( E_B(w)\sum_j a_j^{(i)}b_j^{(i)} E_B(w)\right) \geq c \|E_B(w)\|_\tau^2.
    \end{align*}
Thus $E_B(w)=0$ and therefore $w=0$.
\end{proof}

\begin{rem}
    Suppose $\eta_{ii} =\sum_k(\zeta^{(i)}_k)_\ast \circ (\zeta^{(i)}_k)$
    where each $\zeta^{(i)}_k$ is a normal completely positive map whose predual map still restricts to $M$ and assume that $\sum_k \| \zeta^{(i)}_k(x)\| _\tau^2 =0$ implies $x=0$. (Note that $E_B=(E_B)_*\circ E_B$ is of this form.)  If the $i$th $(B,\eta)$-conjugate variable exists, then arguing as in the proof of Theorem~\ref{thm:no atom} it follows that $x_i$ has no atoms in $B$: for any self-adjoint $b\in B$ and $w$ the projection onto the kernel of $x_i-b$, one has
        \begin{align*}
            0&=\<e_i, w \partial_\eta(x_i-b) w\>=\<e_i, w e_i w\>=\tau(\eta_{ii}(w)w)\\
            &=\sum_k\tau\left((\zeta^{(i)}_k)_\ast \circ (\zeta^{(i)}_k)(w)w\right)= \sum_k\tau(\zeta^{(i)}_k(w)\zeta^{(i)}_k(w))= \sum_k\| \zeta^{(i)}_k(w)\| ^2_\tau.\end{align*}
    Thus $w=0$. $\hfill\blacksquare$
\end{rem}

\begin{rem}
    One might hope to use the strategy from \cite[Theorem 3.1]{MR3558227} to extend the argument in Theorem~\ref{thm:no atom} to higher degree polynomials by iteratively differentiating one of the highest order terms. However, one runs into the following difficulty in the operator-valued case. 
    Define a linear mapping $\Delta_{p,i}:M\to M$ by 
    \[\Delta_{p,i}P=\<e_i \mid p\pe(P)\>_M\] 
    for each $i$ and a projection $p\in M$ satisfying $P^*p=0$. If $Pw=0$, we have $(\Delta_{p,i}P)w=\<e_i \mid p\pe(P)\>_M w =\<e_i \mid p\pe(P)w\>_M=0 $ by Proposition~\ref{prop:zero}. For $P=x_i$ we get 
    \[0=(\Delta_{p,i}P)w=\<e_i\mid pe_i\>_M w=\eta_{ii}(p)w,\]
    which we argued in Theorem~\ref{thm:no atom} implied $w=0$ when $\eta_{ii}=E_B$ and $p=w$. If $\eta_{ij}=0$ for $i\neq j$, then for $P=x_ix_j$ one can iterate this for a properly chosen $p$ (see \cite[Lemma 3.14]{MR3558227}) to obtain
        \[
            0 = \Delta_{p,j}(\Delta_{w,i}(x_ix_j))w = \eta_{jj}(p \eta_{ii}(w))w
        \]
    But even for $\eta=(\delta_{ij} E_B)_{i,j\in I}$ it does not appear that the above implies $w=0$ in general. And this issue only worsens for higher order polynomials. $\hfill\blacksquare$
\end{rem}

Consider a map $\Psi:L^2(M\boxtimes_\eta M,\tau)\to L^2(M\ast_B \Phi(B,\eta),\tau)$ that sends $xe_iy$ to $xs_iy$ where $x,y\in M$ and $\mathbf{s}=(s_i)_{i\in I}$ is a $(B,\eta)$-valued semicircular family. Then $\Psi$ is a $M$-bilinear isometry since for $x_1,y_1,x_2,y_2\in M$, one has
    \begin{align*}
        \<x_1s_iy_1,x_2s_jy_2\>_{L^2(M\ast_B \Phi(B,\eta),\tau)} &= \tau\left( E_B(y_1^\ast s_ix_1^\ast x_2s_jy_2)  \right) = \tau\left( E_B(y_1^\ast)E_B(s_ix_1^\ast x_2s_j)E_B(y_2)  \right) \\&= \tau\left(y_1^\ast\eta_{ij}(x_1^\ast x_2)y_2\right)= \<x_1e_iy_1,x_2e_jy_2\> 
    \end{align*}
by freeness with amalgamation over $B$ and $E_B(s_ias_j)=\eta_{ij}(a)$. We recover Mai-Speicher-Weber's work \cite[Theorem 3.1]{MR3558227} by combining Proposition~\ref{prop:zero}, Corollary~\ref{cor: scalar}, the map $\Psi$ and Anderson's self-adjoint linearization trick (see also \cite[Section 3]{BMS17}).

\begin{rem}
    Let $(M,\tau)$ be a tracial von Neumann algebra, $\x=(x_1,...,x_n)\in M$ be a tuple of self-adjoint operators. Assume $\x$ admit a $(\C, (\delta_{ij}\tau)_{i,j})$-conjugate system (i.e., $\Phi^\ast(x_1,...,x_n)<\infty$). Then for any non-constant self-adjoint polynomial $p$, there exists no nonzero self-adjoint element $w\in (\C\<\x\>)''$ such that $p(x)w=0$.
    
    Indeed, assume $p$ is a non-constant self-adjoint polynomial and $w$ is the projection onto $\ker(p(\x))$ such that $p(\x)w=0$.
    Then 
    \[
        L(\x)=\begin{bmatrix} 0 &u\\ v& Q\end{bmatrix}= a_0\otimes 1 +a_1\otimes x_1 +\cdots a_n\otimes x_n 
    \in \mathbb{M}_k(\C)\otimes M
    \]
    for some $k\in \N$ is a self-adjoint linearization of $p(\x)=-u(\x)Q(\x)^{-1}v(\x)$. Let $B=\mathbb{M}_k(\C)$. Since $\x$ admits $(\C,\delta_{ij}\tau)$-conjugate system, $1\otimes \x $ admits $(B\otimes \C, (\delta_{ij}1_B\otimes \tau))$-conjugate system by Corollary~\ref{cor: scalar}.
    Note that since $p$ was self-adjoint, $L$ is also self-adjoint and $L$ can be written as 
    \[
        L = \begin{bmatrix} 1 & uQ^{-1}\\ 0 & 1\end{bmatrix}\begin{bmatrix} p & 0\\ 0 & Q\end{bmatrix}\begin{bmatrix} 1 & 0\\ Q^{-1}v & 1\end{bmatrix}.
    \]
    For convenience, let $A=\begin{bmatrix} 1 & uQ^{-1}\\ 0 & 1\end{bmatrix}$ and $B=\begin{bmatrix} 1 & 0\\ Q^{-1}v & 1\end{bmatrix}$ and let $W=\begin{bmatrix} w & 0 \\ 0 & 0\end{bmatrix}$ then note that $B^{-1}W=W=WA^{-1}$, and $LW=L^\ast W=0$. Thus we have $W\pe(L)W=0$ from Proposition~\ref{prop:zero}.
    Let $\mathbf{s}=(s_1,...,s_n)$ be a $(B, (\delta_{ij}E_B)_{i,j\in I})$-semicircular family free with amalgamation from $B$. Applying $\Psi$ on $W\pe(L)W=0$ gives us
    \begin{align*}
        0&=\Psi\left(W \pe(L) W\right) = W(a_1\otimes s_1 +\cdots +a_n\otimes s_n)W \\
        &= WA^{-1} \left(L(\mathbf{s})-E_B(L(\mathbf{s}))\right)B^{-1}W\\
        &= WA^{-1} L(\mathbf{s}) B^{-1}W - cW \\
        &= W\begin{bmatrix} p(s) & 0 \\ 0 & Q(s) \end{bmatrix}W- cW
    \end{align*}
    where $c \in \C$ is the $(1,1)$-coordinate of $a_0=E_B(L(\mathbf{s}))$.
    Hence, we get $wp(s)w=cw\in M$ on the $(1,1)$-coordinate and by  $E_M(p(s))=\tau(p(s))1_M$ from freeness, 
    \[
        wp(s)w= E_M(wp(s)w)=wE_M(p(s))w= w\tau(p(s))w.
    \]
    Then denoting $\overset{\circ}{p}(s)=p(s)-\tau(p(s))$ we have 
    \[ 0 = \tau\left(w\overset{\circ}{p}(s) w w \overset{\circ}{p}(s)w\right)=\tau\left( w\overset{\circ}{p}(s)\overset{\circ}{p}(s)w\right)\tau(w) = \tau\left(\overset{\circ}{p}(s)^2 w\right)\tau(w) =\tau\left(\overset{\circ}{p}(s)^2\right)\tau(w)^2\]
    thus $\|\overset{\circ}{p}(s)\|_2^2=0$ and $p(s)=\tau(p(s))$. 
    Form \cite[Corollary 1.2]{SK15}, $p$ must be a constant polynomial which contradicts the assumption.   $\hfill\blacksquare$  

\end{rem}

\section{Relative diffuseness and the center of $B\vee(B'\cap M)$}\label{section: relative diffuse}
In this section we show that when $\eta_{ii}=E_B$ for each $i\in I$ and $\x$ admits a $(B,\eta)$-conjugate system, $B\vee (B'\cap M)$ is diffuse relative to $B$. Moreover, we show that the center of $B\vee (B'\cap M)$ is actually the center of $B$ if $\eta=(\delta_{ij}E_B)_{i,j\in I}$. To accomplish the former, we will use Popa's intertwining technique. For the latter, we follow Dabrowski's proof from \cite[Theorem 1]{MR2606868}. Before proceeding, we prove following technical proposition, which holds for arbitrary covariant matrix.

\begin{prop}\label{prop:central vectors}
    Let $(M,\tau)$ be a tracial von Neumann algebra. Suppose $B\leq N\leq M$ are von Neumann sub-algebras and $N\nprec_M B$. Then $L^2(M\boxtimes_\eta M,\tau)$ has no $N$-central vectors.    
\end{prop}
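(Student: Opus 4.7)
The plan is to argue by contradiction using Popa's intertwining theorem together with an $L^2$-estimate on the maps $\eta_{ij}$. Suppose $\xi \in L^2(M\boxtimes_\eta M,\tau)$ is a nonzero $N$-central vector. Since $N \nprec_M B$, the negation of Theorem~\ref{pre:popathm}(2) produces a net of unitaries $(u_\alpha) \subset \mathcal{U}(N)$ with $\|E_B(xu_\alpha y)\|_\tau \to 0$ for all $x,y \in M$. The $N$-centrality of $\xi$ gives $u_\alpha \xi u_\alpha^* = \xi$ for every $\alpha$, and since the bimodule action of any unitary of $M$ is isometric on $L^2(M\boxtimes_\eta M,\tau)$, we obtain the constant identity
\[
\|\xi\|_2^2 = \<u_\alpha \xi u_\alpha^*,\xi\> \qquad \text{for all } \alpha.
\]
It suffices, therefore, to show this inner product tends to $0$ along the net.

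I would first verify the convergence on simple tensors. For $ae_i b, c e_j d$ with $a,b,c,d\in M$ one computes
\[
\< u_\alpha a e_i b u_\alpha^*, c e_j d\> = \tau\bigl(u_\alpha b^* \eta_{ij}(E_B(a^* u_\alpha^* c)) d\bigr),
\]
which is bounded in modulus by $\|b\|_\infty \|d\|_\infty \|\eta_{ij}(E_B(a^* u_\alpha^* c))\|_\tau$. The key technical step is then a mixed $L^2$–operator-norm bound on $\eta_{ij}$. Using $\eta_{ij}(y)^* = \eta_{ji}(y^*)$ (from complete positivity) and the $\tau$-symmetry hypothesis, I would compute
\[
\|\eta_{ij}(y)\|_\tau^2 = \tau(\eta_{ji}(y^*)\eta_{ij}(y)) = \tau(y^* \eta_{ij}^2(y)) \leq \|y\|_\tau \cdot \|\eta_{ij}\|^2 \|y\|_\infty,
\]
where $\|\eta_{ij}\|$ is the operator norm bound inherited from the complete positivity of $\eta$. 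Applying this with $y = E_B(a^* u_\alpha^* c)$, whose $L^2$-norm vanishes along the net while its operator norm stays bounded by $\|a\|_\infty\|c\|_\infty$, yields $\< u_\alpha a e_i b u_\alpha^*, c e_j d\> \to 0$, and then by bilinearity the same holds when both arguments are replaced by finite sums $\xi_n, \zeta_n \in \operatorname{span}\{me_k n \colon m,n\in M, k\in I\}$.

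To finish I would pass to arbitrary $\xi$ via an $\varepsilon/3$ argument: given $\varepsilon > 0$, choose a simple-tensor approximant $\xi_n$ with $\|\xi - \xi_n\|_2 < \varepsilon$, so that
\[
|\<u_\alpha \xi u_\alpha^*, \xi\> - \<u_\alpha \xi_n u_\alpha^*, \xi_n\>| \leq 2\|\xi\|_2 \|\xi - \xi_n\|_2 + \|\xi - \xi_n\|_2^2
\]
by the triangle inequality and isometry of conjugation. The simple-tensor case gives $\<u_\alpha \xi_n u_\alpha^*, \xi_n\> \to 0$, so $\limsup_\alpha |\<u_\alpha \xi u_\alpha^*, \xi\>| = O(\varepsilon)$, and hence $\|\xi\|_2^2 = 0$, a contradiction. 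The main obstacle to anticipate is the $L^2$-control of $\eta_{ij}$: a priori, $\eta_{ij}$ is bounded only in operator norm, so without $\tau$-symmetry the mixed estimate would fail and the simple-tensor computation would not converge.
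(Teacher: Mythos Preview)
Your argument is correct and follows essentially the same route as the paper: assume an $N$-central vector $\xi$, pull a net of unitaries from Popa's theorem, approximate $\xi$ by a finite sum of simple tensors, and show $\langle u_\alpha\xi u_\alpha^*,\xi\rangle\to 0$ by controlling $\|\eta_{ij}(E_B(\,\cdot\,))\|_\tau$. The only notable technical difference is in the $L^2$-control of $\eta_{ij}$: the paper invokes the clean bound $\|\eta_{ij}(x)\|_\tau\le\|\eta(1)\|\,\|x\|_\tau$ from \cite[Lemma 3.13]{JLNP25} and then uses \emph{both} conditional expectations $E_B(x_i^{(k)*}u^*x_j^{(l)})$ and $E_B(y_j^{(l)}uy_i^{(k)*})$ being small, whereas you derive the mixed estimate $\|\eta_{ij}(y)\|_\tau^2\le\|\eta_{ij}\|^2\|y\|_\tau\|y\|_\infty$ directly from $\tau$-symmetry and only need the single factor $\|E_B(a^*u_\alpha^*c)\|_\tau\to 0$. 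Your route is self-contained (no external lemma) at the price of the extra $\|y\|_\infty$ factor, which is harmless here since $\|E_B(a^*u_\alpha^*c)\|\le\|a\|\|c\|$ is uniformly bounded.
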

\begin{proof}
    Suppose $\zeta$ is a $N$-central vector in $L^2(M\boxtimes_\eta M)$. Then for $\epsilon >0$, there exists $\zeta_0=\sum_{i\in F}\sum_{k=1}^{d} x_{i}^{(k)}e_i y_{i}^{(k)} \in M\boxtimes_\eta M$ for some finite set $F\subset I$ such that $\|\zeta-\zeta_0\|_2 <\epsilon$. Since $N\nprec_M B$, there exists a unitary $u\in \mathcal{U}(N)$ so that $\|E_B(xe_i y)\|_\tau < \frac{\epsilon}{d|F|~\|\eta(1)\|^{1/2}}$ for all $x,y\in \{x_i^{(k)}, y_i^{(k)},x_i^{(k)}\ast, y_i^{(k)}\ast:i\in F, 1\leq k\leq d\}$. Noting that $\|\eta_{ij}(x)\|_\tau \leq \|\eta(1)\|\|x\|_\tau$ by \cite[Lemma 3.13]{JLNP25}, one has 
    \begin{align*}
        |\< u\zeta_0u^\ast, \zeta_0 \>| &\leq \sum_{i,j\in F} \sum_{k,l=1}^d |\<ux_{i}^{(k)} e_i y_{i}^{(k)}u^\ast, x_{j}^{(l)} e_j y_{j}^{(l)}\>|= \sum_{i,j\in F} \sum_{k,l=1}^d |\tau\left(u y_{i}^{(k)\ast} \eta_{ij}(x_{i}^{(k)\ast} u^\ast x_{j}^{(l)})y_{j}^{(l)}\right)|\\
        &=\sum_{i,j\in F} \sum_{k,l=1}^d |\tau
        \left(\eta_{ij}\left(x_i^{(k)\ast} u^\ast x_j^{(l)}\right)E_B\left(y_j^{(l)}uy_i^{(k)\ast}\right)\right)| \\
        &\leq  \sum_{i,j\in F}\sum_{k,l=1}^d \| \eta_{ij}\left(E_B\left( x_i^{(k)\ast} u^\ast x_j^{(l)}\right)\right)\|_\tau \| E_B\left(y_j^{(l)}uy_i^{(k)\ast}\right)\|_\tau\\
        &\leq \sum_{i,j\in F}\sum_{k,l=1}^d\|\eta(1)\| \|E_B\left( x_j^{(l)\ast} ux_i^{(k)}\right)^\ast\|_\tau \| E_B\left(y_j^{(l)}uy_i^{(k)\ast}\right)\|_\tau  <\epsilon^2
    \end{align*}
    Hence we have 
    \begin{align*}
        \| \zeta\| _2^2=\< u\zeta u^\ast ,\zeta\> &= | \<u(\zeta-\zeta_0)u^\ast +u\zeta_0 u^\ast , (\zeta-\zeta_0)+\zeta_0\>|\\
        &\leq | \<u(\zeta-\zeta_0)u^\ast, (\zeta-\zeta_0)\>| + |\<u(\zeta-\zeta_0)u^\ast,\zeta_0\>|+|\<u\zeta_0u^\ast,\zeta-\zeta_0\>| +|\<u\zeta_0 u^\ast,\zeta_0\>|\\
        &\leq \| \zeta-\zeta_0\| _2^2 +\| \zeta-\zeta_0\| _2\cdot \| \zeta_0\| _2 +\| \zeta_0\| _2\cdot \| \zeta-\zeta_0\| _2 +|\< u\zeta_0u^\ast, \zeta_0 \>|\\
        &< \epsilon^2 +2\epsilon\| \zeta_0\| _2 +\epsilon^2\\
        &\leq \epsilon^2 + 2\epsilon (\|\zeta_0-\zeta\|_2 +\|\zeta\|_2) +\epsilon^2\\
        &< 4\epsilon^2 +2\epsilon \|\zeta\|_2.
    \end{align*}
    Since $\epsilon$ was arbitrary, $\| \zeta\| _2=0$ and there are no non-trivial $N$-central vectors in $L^2(M\boxtimes_\eta M,\tau)$.
\end{proof}
Next, we must adapt the $\eta$-partial derivative to $B\vee (B'\cap M)$.
\begin{prop} Let $(M,\tau)$ be a tracial von Neumann algebra generated by a von Neumann algebra $B$ and a tuple of self-adjoint operators $\x=(x_i)_{i\in I}$. Assume that a $(B,\eta)$-conjugate system exists for $\x$ and a covariance matrix $\eta$. Then $\con(\bx)\subset \dom(\overline{\pe})\cap M$ and consequently $\overline\pe$ is densely defined on $B\vee (B'\cap M)$.
\end{prop}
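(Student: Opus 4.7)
The plan is to realize $\con(p)$ for $p \in \bx$ as a $\|\cdot\|_\tau$-limit of unitary conjugation averages from $\mathcal{U}(B)$, and transfer this approximation through $\pe$ using the closability established in Proposition~\ref{prop:adjoint}. By the standard Dixmier-type averaging argument for the tracial inclusion $B \subset M$, the $\|\cdot\|_\tau$-closed convex hull of $\{upu^* : u \in \mathcal{U}(B)\}$ meets $B' \cap M$ in the single point $\con(p)$, so there exist convex combinations $\Phi_n(\cdot) = \sum_k \lambda_k^{(n)} u_k^{(n)} (\cdot) u_k^{(n)*}$ with $u_k^{(n)} \in \mathcal{U}(B)$ satisfying $\|\Phi_n(p) - \con(p)\|_\tau \to 0$.

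The key observation is that since $\pe(u) = 0$ for $u \in B$ and $\pe$ satisfies the Leibniz rule, one has $\pe(upu^*) = u\pe(p)u^*$, hence $\pe(\Phi_n(p)) = \tilde{\Phi}_n(\pe(p))$ where $\tilde{\Phi}_n(\xi) := \sum_k \lambda_k^{(n)} u_k^{(n)} \xi u_k^{(n)*}$ on $L^2(M \boxtimes_\eta M, \tau)$. A direct check with the inner product formula $\langle ae_ib, ce_jd\rangle = \tau(b^*\eta_{ij}(a^*c)d)$, using $u \in B$ and the traciality of $\tau$, gives $\|u\xi u^*\|_2 = \|\xi\|_2$ for $u \in \mathcal{U}(B)$, so each $\tilde{\Phi}_n$ is a contraction on $L^2(M\boxtimes_\eta M,\tau)$. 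Consequently the bounded sequence $(\tilde{\Phi}_n(\pe(p)))_n$ admits, by Mazur's lemma applied to tail convex combinations, a sequence of further convex averages $\Psi_m = \sum_{k \geq m} \mu_k^{(m)} \Phi_k$ such that $\tilde{\Psi}_m(\pe(p))$ converges in $L^2(M\boxtimes_\eta M,\tau)$ to some $\xi_p$; restricting to tails preserves $\Psi_m(p) \to \con(p)$ in $\|\cdot\|_\tau$. Closability of $\pe$ applied to the pair $\Psi_m(p) \to \con(p)$ and $\pe(\Psi_m(p)) = \tilde{\Psi}_m(\pe(p)) \to \xi_p$ then yields $\con(p) \in \dom(\cpe) \cap M$ with $\cpe(\con(p)) = \xi_p$. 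The main technical obstacle here is arranging this simultaneous convergence, which is why we invoke Mazur's lemma on tail averages rather than rely on convergence of a single fixed sequence.

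For the density assertion, $\dom(\cpe) \cap M$ is a $*$-algebra (as noted preceding the proposition) containing both $\bx$ (hence $B$) and $\con(\bx)$. Since $\bx$ is $\|\cdot\|_\tau$-dense in $M$ and $\con$ is a $\|\cdot\|_\tau$-contraction, $\con(\bx)$ is $\|\cdot\|_\tau$-dense in $B' \cap M$. Because $B$ and $B' \cap M$ commute, $B \cdot (B' \cap M)$ is a $*$-subalgebra that is $\|\cdot\|_\tau$-dense in $B \vee (B' \cap M)$ (by Kaplansky density), and approximating each $c \in B' \cap M$ by elements of $\con(\bx)$ shows that $B \cdot \con(\bx) \subset \dom(\cpe) \cap M$ is $\|\cdot\|_\tau$-dense in $B \vee (B' \cap M)$, as required.
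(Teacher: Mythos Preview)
Your proof is correct and follows essentially the same route as the paper's: approximate $\con(p)$ by convex unitary averages $\Phi_n(p)$, use $\pe(upu^*)=u\pe(p)u^*$ to see that $(\pe(\Phi_n(p)))_n$ is bounded, pass to a weakly convergent subsequence and apply Mazur's lemma to upgrade to norm convergence while preserving $\Phi_n(p)\to\con(p)$, then invoke closability. The only cosmetic difference is that the paper makes the extraction of a weak cluster point explicit before invoking Mazur, whereas you fold it into the phrase ``bounded sequence \ldots\ admits, by Mazur's lemma''; your density argument via $B\cdot\con(\bx)$ is a slightly more detailed unpacking of the paper's one-line appeal to weak$^*$ density.
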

\begin{proof}
    For $p\in\bx$, let $\xi\in \overline{\operatorname{conv}}^{\|\cdot\|_\tau}\{upu^\ast:u\in \mathcal{U}(B)\}$, which we approximate by a sequence 
    \[p_n=\sum_{i=1}^{d_n} \alpha_{n,i}u_{n,i}pu_{n,i}^\ast \in \operatorname{conv}\{upu^*\colon u\in \mathcal{U}(B)\}\subset \bx\]
    where $d_n\in \N, \alpha_{n,i}>0, \sum_{i=1}^{d_n}\alpha_{n,i}=1$ and $u_{n,i}\in \mathcal{U}(B)$ for $i=1,...,d_n$ for each $n\in\N$.
    Then 
        \[ \|\pe(p_n)\|_2 = \left\|\sum_{i=1}^{d_n} \alpha_{n,i}u_{n,i}\pe(p)u_{n,i}^\ast\right\|_2 \leq \sum_{i=1}^{d_n} \alpha_{n,i}\|u_{n,i}\pe(p)u_{n,i}^\ast \|_2 = \|\pe(p)\|_2. \]
    Thus $(\pe(p_n))_{n\in \N}$ is bounded, so we can find a weak cluster point $\zeta\in L^2(M\boxtimes_\eta M,\tau)$. Reducing to subsequence, we may assume $\pe(p_n)\to \zeta$ weakly. By Mazur's lemma, for all $N\in \N$ there exists $\zeta_N\in \operatorname{conv}\{\pe(p_n):n\geq N\}$ such that $\| \zeta_N-\zeta\| _2 <1/N$. Thus $\zeta_N\to \zeta$. Since $\zeta_N\in \operatorname{conv}\{\pe(p_n):n\geq N\}$, for each $N$ there exists $k_N\in \N$ and $\beta_j\in(0,1]$ for $j=1,...,k_N$ such that $\sum_{j=1}^{k_N}\beta_j =1$, and $\zeta_N =\sum_{j=1}^{k_N} \beta_j\pe(p_{n_j})=\pe(\sum_{j=1}^{k_N} \beta_j p_{n_j})$ for $n_1,...,n_{k_N}\geq N$. Then $\sum_{j=1}^{k_N} \beta_j p_{n_j} \to \xi$ as $N\to \infty$.
    Therefore, $\xi\in\dom(\overline\pe)$ with $\overline\pe(\xi)=\zeta$. Recalling that $\con(x)\in \overline{\operatorname{conv}}^{\|\cdot\|_\tau}\{uxu^\ast:u\in \mathcal{U}(B)\}$ for any $x\in M$ (see, for example, \cite[Lemma 2.2]{MR4852258}), this shows that $\con(\bx) \subset \dom(\overline\pe)\cap M$. The weak* density of $\bx$ in $M$ implies that of $\con(\bx)$ in $B\vee (B'\cap M)$, and so $\overline\pe$ is a densely defined derivation on $B\vee (B'\cap M)$.
\end{proof}

Observe that if $\eta_{ii}=E_B$, then $e_i$ is $B$-central in $L^2(M\boxtimes_\eta M,\tau)$. Thus $\pe(\sum_j \alpha_ju_jx_iu_j^\ast)=\sum_j \alpha_ju_je_iu_j^*= e_i$ and so $\overline\pe(\con(x_i))=e_i$. 

We are grateful to Ionut Chifan for suggesting the proof of part (c) of the following proposition.
\begin{prop}\label{prop:xit}
     Let $(M,\tau)$ be a tracial von Neumann algebra generated by a von Neumann algebra $B$ and a tuple of self-adjoint operators $\x=(x_i)_{i\in I}$. Assume that an $i$-th $(B,\eta)$-conjugate variable exists for at least one $i\in I$ and a covariance matrix $\eta$ with $\eta_{ii}=E_B$. Then:  
    \begin{enumerate}[label=(\alph*)]
        \item\label{prop:xit no atom} $\con(x_i)$ has no atoms in $B$;
        
        \item\label{prop:xit rel.diff} $B\<\con(x_i)\>''$ is diffuse relative to $B$;

        \item\label{prop:normalizer} $B\<\con(x_i)\>'' \nprec_{\mathcal{N}_M(B)''} B$

        \item\label{prop:xit corner} $B\<E_{B'\cap M}(x_i)\>'' \nprec_{B\vee (B'\cap M)} B$
        
        \item $B\vee(B'\cap M)$ is diffuse relative to $B$.
    \end{enumerate}
\end{prop}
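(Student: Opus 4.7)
Part (a) follows directly from Proposition~\ref{prop:zero}. For self-adjoint $b\in B$, let $w$ be the projection onto $\ker(\con(x_i)-b)$. Then $\con(x_i)-b\in\con(\bx)\subseteq\dom(\cpe)\cap M$ by the proposition preceding this one, and $(\con(x_i)-b)w=0=(\con(x_i)-b)^*w$ by self-adjointness, so Proposition~\ref{prop:zero} gives $w\cpe(\con(x_i)-b)w=0$. As noted immediately before this proposition, the hypothesis $\eta_{ii}=E_B$ makes $e_i$ a $B$-central vector, so $\cpe(\con(x_i))=e_i$, while $\cpe(b)=0$; hence $we_iw=0$. Pairing with $e_i$ in $L^2(M\boxtimes_\eta M,\tau)$ then yields
\[
    0=\langle we_iw,e_i\rangle=\tau(w\eta_{ii}(w))=\tau(wE_B(w))=\|E_B(w)\|_\tau^2,
\]
and faithfulness of $E_B$ forces $w=0$.

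Part (c) is the main obstacle; the plan, following Chifan's suggestion, combines Popa's intertwining theorem with the normalizer structure of $\mathcal{N}_M(B)''$ and part~(a). Suppose for contradiction that $A:=B\langle\con(x_i)\rangle''\prec_{\mathcal{N}_M(B)''}B$. Popa's Theorem~\ref{pre:popathm} produces non-zero projections $p\in A$, $q\in B$, a non-zero partial isometry $v\in q\mathcal{N}_M(B)''p$, and a unital normal $*$-homomorphism $\theta\colon pAp\to qBq$ with $\theta(a)v=va$ for all $a\in pAp$. Since $\con(x_i)\in Z(A)$, taking $a=\con(x_i)p$ and $b:=\theta(\con(x_i)p)\in qBq$ (self-adjoint) yields $bv=v\con(x_i)$ and consequently $f(b)v=vf(\con(x_i))$ for every bounded Borel $f$. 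Two structural ingredients then drive the argument. First, each $u\in\mathcal{N}_M(B)$ satisfies $u(B'\cap M)u^*=(uBu^*)'\cap M=B'\cap M$, so $\mathcal{N}_M(B)''$ normalizes $B'\cap M$ under conjugation. Second, since $v\neq 0$ and $E_B$ is faithful on $\mathcal{N}_M(B)''$, there exists $u\in\mathcal{N}_M(B)$ with $c:=E_B(u^*v)\in B$ non-zero: otherwise, $B$-bimodularity and weak continuity of $E_B$ would give $E_B(v^*v)=0$, forcing $v=0$. Writing $u^*bu=\alpha_u(b)\in B$, the intertwining rewrites as $\alpha_u(b)(u^*v)=(u^*v)\con(x_i)$; applying $E_B$, iterating to $f(\alpha_u(b))(u^*v)=(u^*v)f(\con(x_i))$ for bounded Borel $f$, and exploiting both the spectral functional calculus for $\con(x_i)\in B'\cap M$ and the commutation $[\con(x_i),B]=0$, one extracts a non-zero sub-projection $s\in B$ of the support of $|c|$ together with a self-adjoint $z\in Z(B)$ such that $s\leq \ker(\con(x_i)-z)$. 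This contradicts part~(a).

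Parts (b), (d), and (e) follow immediately from (c) via the chain of inclusions
\[
    B\langle\con(x_i)\rangle''\;\subseteq\;B\vee(B'\cap M)\;\subseteq\;\mathcal{N}_M(B)''.
\]
Since any finitely generated $A$-$B$-subcorrespondence of $L^2(N_1)$ also sits inside $L^2(N_2)$ whenever $N_1\subseteq N_2$, intertwining inside a smaller ambient algebra always propagates to a larger one; contrapositively, (c) yields both (b) (with ambient $A$ itself) and (d) (with ambient $B\vee(B'\cap M)$). For (e), Popa's criterion~(2) applied to (d) supplies a net of unitaries $(u_n)\subseteq\mathcal{U}(B\langle\con(x_i)\rangle'')\subseteq\mathcal{U}(B\vee(B'\cap M))$ with $\|E_B(xu_nz)\|_\tau\to 0$ for all $x,z\in B\vee(B'\cap M)$, which is exactly the statement $B\vee(B'\cap M)\nprec_{B\vee(B'\cap M)}B$.
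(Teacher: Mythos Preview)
Your proof of (a) is correct and matches the paper's. Your reduction of (b), (d), (e) to (c) is also valid---intertwining in a smaller ambient algebra passes to larger ones, so $\ait\nprec_{\mathcal{N}_M(B)''}B$ does imply all three.

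The genuine gap is in your direct attack on (c). You reach $b'w=w\xit$ with $b':=\alpha_u(b)\in B$, $w:=u^*v\in\mathcal{N}_M(B)''$, and $c:=E_B(w)\neq 0$, and then assert that ``one extracts a non-zero sub-projection $s\in B$ of the support of $|c|$ together with a self-adjoint $z\in Z(B)$ such that $s\leq\ker(\xit-z)$.'' This step is not justified, and I do not see how to carry it out. Applying $E_B$ to $b'w=w\xit$ gives $b'c=E_B(w\xit)$, and there is no reason for the right-hand side to equal $c\xit$ or anything computable from $c$ alone, since $\xit\notin B$ and $w$ need not commute with $\xit$. The functional-calculus upgrade $f(b')w=wf(\xit)$ is correct but leaves the same obstacle. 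The underlying problem is that your partial isometry $v$ lives in $\mathcal{N}_M(B)''$, where $\xit$ is \emph{not} central, so the intertwining relation cannot be converted into $(\xit-\text{something in }B)\cdot(\text{non-zero projection})=0$.

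The paper sidesteps this by reversing your order: prove (b) directly, then deduce (c). For (b), the partial isometry $v$ lies in $\ait$ itself; since $\xit\in Z(\ait)$, one immediately gets $\theta(p\xit p)vv^*=v\xit v^*=\xit vv^*$, so $vv^*\leq\ker(\xit-\theta(p\xit p))$, contradicting (a). To pass from (b) to (c), the paper proves the general fact that if $B\leq A\leq P\leq\mathcal{N}_M(B)''$ and $A\nprec_P B$, then $A\nprec_{\mathcal{N}_M(B)''}B$: taking a net $(u_k)\subset\mathcal{U}(A)$ witnessing $A\nprec_P B$, one computes for $a,d\in\mathcal{N}_M(B)$ that
\[
    \|E_B(au_kd)\|_\tau=\|a^*E_B(au_kd a a^*)a\|_\tau=\|E_{B}(u_kda)\|_\tau=\|E_B(u_kE_P(da))\|_\tau\to 0,
\]
using $a^*Ba=B$ and $E_B=E_B\circ E_P$. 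Setting $A=P=\ait$ gives (c). This is the normalizer trick attributed to Chifan; you invoked the normalizer structure, but at the wrong stage of the argument.
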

\begin{proof} 
Let $\xit=\con(x_i), \ait=B\<\xit\>''$ and $\at=B\vee(B'\cap M)$.\\
\textbf{(a):} Since $\overline{\pe}(\xit)=e_i$, arguing as in the Theorem~\ref{thm:no atom} shows that $\xit$ has no atoms in $B$.\\

\noindent\textbf{(b):} Suppose towards a contradiction, $\ait\prec_{\ait} B$. Then by Theorem~\ref{pre:popathm}, there exist projections $p\in \ait ,q\in B$, a non-zero partial isometry $v\in q\ait p$, and normal $*$-homomorphism $\theta:p\ait p\to qBq$ such that $\theta(pap)v=v(pap)$ for all $a\in \ait $. Then for $a=\xit$
        \[ \theta(p\xit p)vv^\ast =v(p\xit p)v^\ast= v\xit v^\ast = \xit vv^\ast,\]
where the last equality follows from $\xit$ being in the center of $\ait$. This implies that $\theta(p\xit p)\in B$ is an atom for $\xit$, contradicting \ref{prop:xit no atom}. Therefore $\ait$ is diffuse relative to $B$. \\

\noindent\textbf{(c):} We will show a more general case where $A,B,P\leq M$ be von Neumann subalgebras such that $B\subset P$ and $A\subset P\subset \mathcal{N}_M(B)'' \subset M$, then $A\nprec_P B$ implies $A\nprec_{\mathcal{N}_M(B)''} B$. Indeed,
since $A\nprec_P B$, there exists a net of unitaries $(u_k)_{k}\subset A$ that satisfies $\lim_{k\to\infty}\|E_B(xu_ky)\|_\tau=0$ for all $x,y\in P$. Let $a,b\in \mathcal{N}_M(B)$. Then 
    \begin{align*}
        \|E_B(au_kb)\|_\tau&=\|a^\ast E_B(au_kbaa^\ast)a\|_\tau = \|E_{a^\ast B a}(u_kba)\|_\tau = \|E_B(u_kba)\|_\tau \\
        &=\|E_B\circ E_P(u_kba)\|_\tau=\|E_B(u_kE_P(ba))\|_\tau
    \end{align*}
    converges to $0$ as $k\to\infty$. 
    For elements in $\mathcal{N}_M(B)''$ it follows by a standard approximation argument. 
    By setting $A=P=\ait$ from above, it is immediate from \ref{prop:xit rel.diff}.\\

\noindent \textbf{(d):} Since $\mathcal{N}_M(B)''\supset B\vee (B'\cap M)$, it is immediate from \ref{prop:normalizer}.\\

\noindent \textbf{(e):} Suppose $\at\prec_{\at} B$. Then by the Lemma~\ref{pre:lem1}, $\ait\prec_{\ait}B$ which contradicts \ref{prop:xit rel.diff}.

\end{proof}


\begin{thm}[Theorem~\ref{thm a}]\label{thm:main}
    Let $(M,\tau)$ be a tracial von Neumann algebra generated by a von Neumann algebra $B$ and a tuple of self-adjoint operators $\x=(x_i)_{i\in I}$ with $|I|>1$. Assume that a $(B,\eta)$-conjugate system exists for $\x$ and a covariance matrix $\eta=(\delta_{ij}E_B)_{i,j\in I}$. Then $Z(B\vee(B'\cap M))=Z(B)$. In particular, $Z(M)\subset Z(B)$.
\end{thm}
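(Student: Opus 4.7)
Both inclusions of $Z(\at) = Z(B)$, where $\at := B \vee (B' \cap M)$, must be verified. The inclusion $Z(B) \subset Z(\at)$ is immediate: any $c \in Z(B)$ lies in $B \subset \at$, commutes with $B$ (since $c \in B'$), and commutes with $B' \cap M$ (since $B' \cap M$ commutes with $B$).

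For the reverse inclusion, fix $i \in I$ and set $\xit := \con(x_i)$. By Proposition~\ref{prop:xit}\ref{prop:xit corner} one has $B\<\xit\>'' \nprec_\at B$, so Proposition~\ref{prop:central vectors} (applied with $M$ replaced by $\at$ and $N = B\<\xit\>''$) gives that $L^2(\at \boxtimes_\eta \at, \tau)$ contains no $B\<\xit\>''$-central vectors; the same conclusion holds with any $j \in I$ in place of $i$. Now let $z \in Z(\at)$. Since $uE_B(z)u^\ast = E_B(uzu^\ast) = E_B(z)$ for every $u \in \mathcal{U}(B)$, $E_B(z) \in Z(B)$; replacing $z$ by $z - E_B(z)$ and decomposing into real and imaginary parts, it suffices to show $z = 0$ when $z \in Z(\at)$ is self-adjoint with $E_B(z) = 0$.

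The heart of the argument is Dabrowski's strategy adapted to the operator-valued setting. After a regularization step placing $z$ in $\dom(\cpe)$ with $\cpe(z) \in L^2(\at \boxtimes_\eta \at, \tau)$, applying the Leibniz rule to $[z, \tilde{x}_j] = 0$ yields
\[
    [\cpe(z), \tilde{x}_j] \;=\; [e_j, z] \qquad \text{in } L^2(\at \boxtimes_\eta \at, \tau),
\]
for every $j \in I$. Decomposing $L^2(\at \boxtimes_\eta \at, \tau) = \bigoplus_{k \in I} L^2(\at) \otimes_B L^2(\at)$ via $\eta = (\delta_{jk}E_B)_{j,k}$ and writing $\cpe(z) = (\partial_k z)_{k \in I}$, the left and right actions of $\tilde{x}_j$ preserve each summand while $[e_j, z]$ lies in the $j$-th summand only. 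Comparing summands gives $[\partial_k z, \tilde{x}_j] = 0$ for all $k \neq j$. Moreover each $\partial_k z$ is $B$-central: for $b \in B$, the Leibniz rule together with $\cpe(b) = 0$ and $[b,z] = 0$ gives $\cpe(z) b = \cpe(zb) = \cpe(bz) = b \cpe(z)$. Since $|I| > 1$, for every $k \in I$ we may pick some $j \neq k$; then $\partial_k z$ is a $B\<\tilde{x}_j\>''$-central vector, forcing $\partial_k z = 0$ by the no-central-vectors conclusion. Hence $\cpe(z) = 0$, and in particular $[e_i, z] = 0$. A direct inner-product computation on the bimodule yields $\|z e_i - e_i z\|_2^2 = 2(\|z\|_2^2 - \|E_B(z)\|_2^2) = 2\|z\|_2^2$, forcing $z = 0$ as desired.

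The principal obstacle is the regularization step: the natural approximation $\con(p_n) \to z$ for $p_n \in \bx$ provides elements of $\dom(\cpe) \cap \at$, but does not a priori control $\|\cpe(\con(p_n))\|_2$ nor guarantee that the derivative lies in the smaller bimodule $L^2(\at \boxtimes_\eta \at, \tau)$ rather than in $L^2(M \boxtimes_\eta M, \tau)$. This is overcome by a smoothing argument that brings $z$ into $\dom(\cpe)$ while preserving both its centrality in $\at$ and the required localization of its image.
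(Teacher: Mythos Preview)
Your outline correctly identifies the overall strategy (Dabrowski's method adapted to the operator-valued setting) and the endgame computation, but the proof is incomplete precisely where you say it is: the ``regularization step'' is the entire technical content of the argument, and you have not carried it out. Saying ``this is overcome by a smoothing argument'' is not a proof; it is a statement of what needs to be proved.

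The paper's proof does not in fact regularize $z$ into $\dom(\cpe)$ while preserving its full centrality in $\at$. Instead it works one coordinate at a time: for each fixed $j\in I$ it considers the component derivation $\pej$, forms $\Delta_j=\peaj\overline{\pej}$ on $L^2(M,\tau)$, and uses the resolvent smoothing $\zeta_{t,j}=(t/(t+\Delta_j))^{1/2}$. The point you are missing is the explicit verification that $\Delta_j$---and hence $\zeta_{t,j}$ by functional calculus---commutes with left and right multiplication by any $y\in\{\xit:i\neq j\}\cup B$, precisely because $\overline{\pej}(y)=0$ for such $y$. This is what allows one to pass from $0=[y,z]$ to $0=[y,\overline{\pej}\zeta_{t,j}(z)]$, conclude that $\overline{\pej}\zeta_{t,j}(z)$ is $\ait$-central (for some $i\neq j$, using $|I|>1$), and hence vanishes by Proposition~\ref{prop:central vectors}. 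Sending $t\to\infty$ then puts $z\in\dom(\overline{\pej})$ with $\overline{\pej}(z)=0$, after which your Leibniz computation $0=\overline{\pej}([x_j,z])=[e_j,z]$ and the final norm estimate go through.

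Note also a small structural mismatch: your decomposition argument requires $z\in\dom(\cpe)$ \emph{before} invoking the no-central-vectors lemma, whereas in the paper this membership is the \emph{conclusion} of the resolvent argument, obtained one $j$ at a time. There is no evident way to place $z$ directly in $\dom(\cpe)$ while preserving commutation with all the $\tilde{x}_j$ simultaneously, which is why the paper proceeds coordinate by coordinate.
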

\begin{proof}
    Let $\xit=\con(x_i), \ait=B\<\xit\>''$ and $\at=B\vee(B'\cap M)$. First, observe that there are no $\ait$-central vectors in $L^2(\at \boxtimes_{E_B}\at,\tau)$ by Proposition~\ref{prop:xit}~\ref{prop:xit corner} and Proposition~\ref{prop:central vectors}.
    From here, we follow the argument used in the proof of \cite[Theorem 1]{MR2606868}. 
    Note that if $\eta_{ij}=0$ for $i\neq j$, one has
    \[ L^2(M\underset{\eta}{\boxtimes}  M,\tau)=\bigoplus_{j\in I}L^2(M\underset{\eta_{jj}}{\boxtimes} M,\tau) \]
    and 
    $\pe$ decomposes as $\sum_{j\in I} \pej$ where $\pej\colon \bx\to L^2(M\boxtimes_{\eta_{jj}} M,\tau)$ satisfies
    \begin{align*}
        \pej(x_i)= \delta_{ij} e_j \qquad i\in I,
    \end{align*}
    and $\pej(b)=0$ for all $b\in B$.
   
    Consider the positive unbounded operator on $L^2(M,\tau)$ 
        \[\Delta_j:=\peaj\overline{\pej}\] 
    and a bounded operator on $L^2(M,\tau)$ for all $t>0$
        \[\zeta_{t,j} :=\left( \frac{t}{t+\Delta_j}\right)^{1/2}.\] 
    Then $\| \zeta_{t,j}(y)-y\| _2 \to 0$ as $t\to\infty$ for $y\in L^2(M,\tau)$ and since $\zeta_{t,j}(L^2(M))\subset \dom(\Delta_j)\subset\dom(\overline{\pej})$, $\overline{\pej}\circ\zeta_{t,j}$ is bounded (see \cite[Section 2]{MR2470111} or \cite[Section 1]{MR2606868}). Note that $\Delta_j(\xit)=0$ and $\xit$ is in the domain of $\Delta_j$ for $i\neq j$. For $\xi\in \dom(\Delta_j)\subset \dom(\overline{\pej})$, $i\neq j$ and $p\in \bx$ we have
        \begin{align*}
            \< \Delta_j(\xit\xi), p\>_\tau &= \< \overline{\pej}(\xit\xi),\pej(p)\> \\
            &= \< \xit \overline{\pej}(\xi), \pej(p)\>\\
            &= \< \overline{\pej}(\xi), \xit\pej(p) \>\\
            &= \< \overline{\pej}(\xi), \overline{\pej}(\xit p)\>\\
            &= \< \Delta_j(\xi),  \xit p\>_\tau = \< \xit\Delta_j(\xi),  p\>_\tau.
        \end{align*}
    Thus $\Delta_j(\xit\xi) = \xit\Delta_j(\xi)$. Likewise, $\Delta_j(\xi \xit)=\Delta_j(\xi)\xit$.
 
    Now, for arbitrary $\xi\in L^2(M,\tau)$ let $f=\zeta_{t,j}^2(\xit \xi)=\frac{t}{t+\Delta_j}(\xit \xi)$ and $g=\zeta_{t,j}^2(\xi)=\frac{t}{t+\Delta_j}(\xi)$. Then 
    \begin{align*}
        \frac{1}{t}(t+\Delta_j)(f-\xit g)&=\xit \xi-\xit g- \frac{1}{t}\Delta_j(\xit g)\\
        &=\xit \xi-\xit g-\xit\frac{1}{t}\Delta_j(g)=\xit \xi-\xit((1+\frac{\Delta_j}{t})(g))=\xit \xi-\xit \xi=0.
    \end{align*}
    Thus $\zeta_{t,j}^2(\xit \xi)=\xit \zeta_{t,j}^2(\xi)$ and the functional calculus implies $\zeta_{t,j}(\xit \xi)=\xit\zeta_{t,j}(\xi)$. Similarly, $\zeta_{t,j}(\xi \xit )=\zeta_{t,j}(\xi)\xit$. Therefore $\zeta_{t,j}([\xit,\xi])=[\xit,\zeta_{t,j}(\xi)]$.
    Assume $z\in \at'\cap \at$. Then
        \[0=\overline{\pej}\circ \zeta_{t,j}([\xit,z])=[\xit,\overline{\pej}\circ\zeta_{t,j}(z)].\]  Similarly for $b\in B$ one has
        \[0=\overline{\pej}\circ\zeta_{t,j}([b,z])=[b,\overline{\pej}\circ\zeta_{t,j}(z)].\] 
    Therefore $\overline{\pej}\circ\zeta_{t,j}(z)$ is $\ait$ central. Since there are no non-zero $\ait$-central vectors, $\overline{\pej}\circ\zeta_{t,j}(z)=0$ for all $t>0$. So
     $\zeta_{t,j}(z)\to z$ and $\overline{\pej}(\zeta_{t,j}(z))=0$ imply $z\in \dom(\overline{\pej})$ with $\overline{\pej}(z)=0$. But then $0=\overline{\pej}([x_j, z])=[e_j,z]$, and so
    \begin{align*}
        0&=\< [e_j,z],[e_j,z]\> =\tau(z^\ast z-z^\ast E_B(z)-E_B(z^\ast)z+E_B(z^\ast z) )\\&\geq \tau(z^\ast z-z^\ast E_B(z)-E_B(z^\ast)z+E_B(z^\ast)E_B( z) )=\| z-E_B(z)\| _\tau^2.
    \end{align*} 
    Therefore $z=E_B(z)\in B$ and $\at'\cap \at \subset B'\cap B$. The reverse inclusion is immediate from $\at=B\vee (B'\cap M)$. For the second part, let $a\in Z(M)$. Then $a$ commutes with $B$ hence $a\in B\vee(B'\cap M)$. Also, $a\in M'\subset (B\vee(B'\cap M))'$. Therefore $a\in Z(B\vee(B'\cap M))=Z(B)$.  
\end{proof}

\begin{cor}\label{cor:irred}
    Let $(M,\tau)$ be a tracial von Neumann algebra generated by a von Neumann algebra $B$ and a tuple of self-adjoint operators $\x=(x_i)_{i\in I}$ with $|I|>1$. Assume that a $(B,\eta)$-conjugate system exists for $\x$ and a covariance matrix $\eta=(\delta_{ij}E_B)_{i,j\in I}$. If $B$ is a factor, then every intermediate algebra $B\vee(B'\cap M)\leq N \leq M$ is irreducible in $M$.
\end{cor}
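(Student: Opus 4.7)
The plan is to leverage Theorem~\ref{thm:main} together with the trivial inclusion $(B\vee(B'\cap M))'\cap M \subseteq B'\cap M \subseteq B\vee(B'\cap M)$ to squeeze the relative commutant inside the center of $B\vee(B'\cap M)$, which is already known to be $\mathbb{C}$ under the factoriality hypothesis.

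First I would set $\tilde{A} = B\vee(B'\cap M)$ and invoke Theorem~\ref{thm:main} to obtain $Z(\tilde{A}) = Z(B)$. Since $B$ is assumed to be a factor, this gives $Z(\tilde{A}) = \mathbb{C}$. The next step is to identify $\tilde{A}' \cap M$ with a subset of $Z(\tilde{A})$. For this, take any $x \in \tilde{A}' \cap M$; since $x$ commutes with $B$, it lies in $B'\cap M \subseteq \tilde{A}$. Thus $x \in \tilde{A}' \cap \tilde{A} = Z(\tilde{A}) = \mathbb{C}$, showing that $\tilde{A}' \cap M = \mathbb{C}$, i.e. $\tilde{A}$ itself is irreducible in $M$.

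Finally, for an arbitrary intermediate von Neumann algebra $\tilde{A} \leq N \leq M$, the inclusion $N \supseteq \tilde{A}$ gives
\[
N' \cap M \subseteq \tilde{A}' \cap M = \mathbb{C},
\]
which is the desired irreducibility. This whole argument is essentially a one-line consequence of Theorem~\ref{thm:main}, so there is no substantive obstacle; the only thing to be careful about is pointing out why $\tilde{A}' \cap M$ is actually contained in $\tilde{A}$ (which it is, via $B'\cap M \subseteq \tilde{A}$), as otherwise one could not identify it with the center.
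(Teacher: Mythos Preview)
Your proof is correct and follows essentially the same route as the paper: both use that $N'\cap M\subseteq B'\cap M\subseteq \tilde{A}$ together with $N'\cap M\subseteq \tilde{A}'$ to force $N'\cap M\subseteq Z(\tilde{A})=Z(B)=\mathbb{C}$. The only cosmetic difference is that the paper argues directly for arbitrary $N$, whereas you first establish the case $N=\tilde{A}$ and then use monotonicity of the commutant.
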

\begin{proof}
    Let $N$ be such that $B\vee(B'\cap M)\leq N \leq M$. Then $B\leq N$ and hence  
        \[N'\cap M\leq B'\cap M\leq B\vee(B'\cap M).\] 
    On the other hand, $B\vee (B'\cap M)\leq N$ implies \[N'\cap M\leq (B\vee(B'\cap M))'\cap M.\]
    Therefore $N'\cap M\leq Z(B\vee (B'\cap M))=Z(B)=\C$.
\end{proof}

\begin{ex}
    For $t>0$, let $(x_i(t))_{i\in I}$ and $N_t$ be as in Example~\ref{ex:amalgamation}. If $\eta=\delta_{ij}E_B$, we have $Z(N_t)\subset Z(B)$ by the Theorem~\ref{thm:main}. In particular, $N_t$ is a factor whenever $B$ is a factor.
\end{ex}
We conclude by noting that Proposition~\ref{prop:central vectors} allows us to give a relative version of \cite[Theorem 4]{MR2606868} (see Remark~\ref{rem:dab4}).
\begin{prop}\label{prop:dab4}
    Let $(M,\tau)$ be a tracial von Neumann algebra generated by a von Neumann algebra $B$ and a tuple of self-adjoint operators $\x=(x_i)_{i\in I}$. Suppose an intermediate von Neumann algebra $B\leq N \leq M$ satisfies $N\nprec_M B$. Further suppose that the $i_0$-th $(N,\eta\circ E_B)$-conjugate variable exists for some $i_0\in I$, where $\eta=(\eta_{ij})_{i,j\in I}$ is a covariance matrix over $B$ satisfying $\eta_{i_0i_0}(1)=1$. Then 
    \[ 
        (N\cup \{x_{i_0}\})'\cap M \subset Z(B).
    \]
\end{prop}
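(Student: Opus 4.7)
The plan is to mimic the proof of Theorem~\ref{thm:main}, but compress the partial derivative to the sub-bimodule generated by $e_{i_0}$ so that only the single conjugate variable $\xi_{i_0}$ is required. After applying Shlyakhtenko's trick if necessary, let $\partial := \partial_{\eta\circ E_B}\colon N\<\x\>\to L^2(M\boxtimes_{\eta\circ E_B}M,\tau)$ denote the $N$-bimodular derivation satisfying $\partial(x_i)=e_i$ and $\partial(n)=0$ for $n\in N$; the hypothesis that the $i_0$-th $(N,\eta\circ E_B)$-conjugate variable exists says exactly that $e_{i_0}\in\dom(\partial^*)$ with $\partial^*(e_{i_0})=\xi_{i_0}$. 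Let $\mathcal{K}$ be the closed $M$-sub-bimodule of $L^2(M\boxtimes_{\eta\circ E_B}M,\tau)$ generated by $e_{i_0}$, and let $P_\mathcal{K}$ denote the associated orthogonal projection, which is automatically $M$-bimodular. Then $\tilde\partial := P_\mathcal{K}\circ\partial$ is an $N$-bimodular derivation into $\mathcal{K}$ with $\tilde\partial(x_{i_0})=e_{i_0}$ and $\tilde\partial(n)=0$ for $n\in N$. A formula analogous to Proposition~\ref{prop:adjoint} shows that $\tilde\partial^*$ is defined on $\operatorname{span}\{p e_{i_0} q : p,q\in N\<\x\>\}$, which is dense in $\mathcal{K}$ by Kaplansky's density theorem, so $\tilde\partial$ is closable.

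Setting $\Delta := \tilde\partial^*\overline{\tilde\partial}$ and $\zeta_t := (t/(t+\Delta))^{1/2}$, the vanishing of $\tilde\partial$ on $N$ and the same computation as in Theorem~\ref{thm:main} show that $\Delta$, and hence each $\zeta_t$, commutes with the left and right $N$-actions on $L^2(M,\tau)$. Given $z \in (N\cup\{x_{i_0}\})'\cap M$, the fact that $z$ commutes with $N$ forces $\zeta_t(z)$ to commute with $N$, so $\overline{\tilde\partial}(\zeta_t(z))$ is an $N$-central vector in $\mathcal{K}\subseteq L^2(M\boxtimes_{\eta\circ E_B}M,\tau)$. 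Proposition~\ref{prop:central vectors}, applied with covariance $\eta\circ E_B$ and the hypothesis $N\nprec_M B$, then forces this vector to vanish. Sending $t\to\infty$ and using closedness of $\overline{\tilde\partial}$ together with $\zeta_t(z)\to z$ in $\|\cdot\|_\tau$ gives $\overline{\tilde\partial}(z)=0$, and the Leibniz rule on $\dom(\overline{\tilde\partial})\cap M$ applied to $[x_{i_0},z]=0$ yields
\[
0 = \overline{\tilde\partial}([x_{i_0},z]) = [\tilde\partial(x_{i_0}),z] = [e_{i_0},z]
\]
in $\mathcal{K}$.

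To finish, pair $[e_{i_0},z]=0$ against $p\, e_{i_0}$ for arbitrary $p\in M$. Using $\eta_{i_0 i_0}(1)=1$, the $\tau$-symmetry of $\eta$, and the $B$-linearity of $E_B$, this reduces to
\[
\tau(\eta_{i_0 i_0}(E_B(z^*))\,p) = \tau(z^* p) \qquad \text{for all } p\in M,
\]
so faithfulness of $\tau$ forces $z^* = \eta_{i_0 i_0}(E_B(z^*)) \in B$. Hence $z\in B$, and since $z$ also commutes with $B\subseteq N$, we conclude $z\in B\cap B' = Z(B)$. The main obstacle is that, unlike in Theorem~\ref{thm:main} where $\eta$ is diagonal and $\partial$ splits into orthogonal pieces indexed by $I$, for general $\eta$ there is no such decomposition of $L^2(M\boxtimes_{\eta\circ E_B}M,\tau)$; the compression by $P_\mathcal{K}$ replaces this decomposition and is precisely what allows closability to be deduced from the existence of just the single conjugate variable $\xi_{i_0}$.
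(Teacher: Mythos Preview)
Your proof is correct and follows the same overall architecture as the paper's: build a single-index derivation that vanishes on $N$, use Proposition~\ref{prop:central vectors} together with the resolvent operators $\zeta_t$ to obtain $\overline{\tilde\partial}(z)=0$, deduce $[e_{i_0},z]=0$ via the Leibniz rule, and conclude $z\in Z(B)$. Two implementation choices differ and are worth recording. First, the paper defines its single-index derivation by declaring $x_j\mapsto \delta_{i_0 j}e_{i_0}$ directly, whereas you compress the full $\partial$ by the $M$-bimodular projection $P_{\mathcal K}$; your route makes closability from the single conjugate variable transparent, since for $\xi\in\mathcal K$ one has $\<\xi,\tilde\partial(p)\>=\<\xi,\partial(p)\>$ and hence $\tilde\partial^*(e_{i_0})=\partial^*(e_{i_0})=\xi_{i_0}$ immediately, with Proposition~\ref{prop:adjoint} then giving density of $\dom(\tilde\partial^*)$ in $\mathcal K$. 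Second, to pass from $[e_{i_0},z]=0$ to $z\in B$, the paper expands $\|[e_{i_0},z]\|_2^2$ and invokes the inequality $\<\eta_{i_0i_0}(x),x\>_\tau\leq\<E_B(x),x\>_\tau$ (coming from $\|\eta_{i_0i_0}(1)\|\leq 1$) to get $\|z-E_B(z)\|_\tau^2\leq 0$; your pairing of $[e_{i_0},z]=0$ against $p\,e_{i_0}$ and use of $\tau$-symmetry together with $\eta_{i_0i_0}(1)=1$ yields $z=\eta_{i_0i_0}(E_B(z))\in B$ in one line, which is a bit more direct.
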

\begin{proof}
    As usual, we will abuse notation slightly to identify $\eta\circ E_B$ with $\eta$. Fix $z\in (N\cup \{x_{i_0}\})'\cap M$. Let $\partial_{\eta,i_0}:N\<\x\>\to L^2(M\boxtimes_\eta M)$ be the usual derivations satisfying $\partial_{\eta,i_0}(x_j)=\delta_{i_0j}e_i$ and $\partial_{\eta,i_0}(y)=0$ for all $y\in N$, and let $\zeta_{t,i_0}$ as in the proof of Theorem~\ref{thm:main}. Then for $y\in N$, arguing as in the proof of Theorem~\ref{thm:main} we have 
    \[
        0=\partial_{\eta,i}\circ\zeta_{t,i_0}([y,z]) = [y,\partial_{\eta,i}\circ\zeta_{t,i_0}(z)].
    \]
    There are no $N$-central vectors by Proposition~\ref{prop:central vectors}, hence $\partial_{\eta,i}\circ\zeta_{t,i_0}(z)$ must be zero. As $\|\zeta_{t,i_0}(z) - z\|_\tau\to 0$, it follows that $z\in \dom(\overline{\partial_{\eta,i_0}})$ with $\overline{\partial_{\eta,i}}(z)=0$.
    Thus $0=\overline{\partial_{\eta,i_0}}([x_{i_0},z])=[e_{i_0},z]$.
    
    Noting that $\|\eta_{i_0i_0}(x)\|_\tau \leq \|x\|_\tau$ by \cite[Lemma 3.13]{JLNP25}.
    we have 
    \begin{align*}
        \< \eta_{i_0i_0}(x),x\> &= \< \eta_{i_0i_0}(E_B(x)),E_B(x)\>\\
        &\leq | \< \eta_{i_0i_0}(E_B(x)),E_B(x)\> |\\
        &\leq \| \eta_{i_0i_0}(E_B(x))\|_\tau \|E_B(x)\|_\tau \\
        &\leq \|E_B(x)\|_\tau^2\\
        &=\<E_B(x),x\>_\tau
    \end{align*}
    Therefore, 
    \begin{align*}
        0&=\<[e_{i_0},z],[e_{i_0},z]\>=\tau\left(z^\ast z-z^\ast\eta_{i_0i_0}(z)-\eta_{i_0i_0}(z^\ast)z+\eta_{i_0i_0}(z^\ast z)\right)\\
        &=2\tau(z^\ast z-\eta_{i_0i_0}(z)^\ast z) = 2\<z-\eta_{i_0i_0}(z),z\>\\
        &\geq 2\<z-E_B(z),z\>=\|z-E_B(z)\|_\tau^2
    \end{align*}
    Thus $z=E_B(z)$ and $z\in B$. Also, since $z\in (N\cap \{x_i\})' \subset B'$, $z\in Z(B)$.
\end{proof}
\begin{rem}\label{rem:dab4}
    For $B=\C$ and $\eta=(\delta_{ij}\tau)_{i,j\in I}$, Proposition~\ref{prop:dab4} recovers \cite[Theorem 4]{MR2606868} since $N\nprec_M \C$ is equivalent to $N\nprec_N\C$ ($N$ is diffuse). Indeed, if $N$ is diffuse, there exists a net of unitaries converging weakly to zero. Since weak convergence holds under any representation of $N$, they will still converge weakly to zero on $L^2(M)$. For the converse, assume $N\prec_N \C$. Using Theorem~\ref{pre:popathm}(\ref{popa 3}), there exists $N$-$\C$-correspondence $K\leq L^2(N)$ with finite dimension. Then $K\leq L^2(M)$ still with finite dimension, hence $N\prec_M \C$.
\end{rem}

\bibliographystyle{amsalpha}
\bibliography{reference}

\end{document}